\theoremstyle{plain}
\newtheorem{theorem}{Theorem}[section] 
\newtheorem{prop}[theorem]{Proposition} 
\newtheorem{lemma}[theorem]{Lemma}
\newtheorem{corollary}[theorem]{Corollary} 
\crefname{lemma}{lemma}{lemmas}
\crefname{prop}{proposition}{propositions}
\Crefname{corollary}{corollary}{corollaries}
\theoremstyle{definition}
\newtheorem{claim}{Claim}
\crefname{claim}{claim}{claims}
\theoremstyle{definition}
\newtheorem{definition}[theorem]{Definition}
\crefname{definition}{definition}{definitions}
\theoremstyle{remark}
\newaliascnt{remark}{theorem} 
\newtheorem{remark}[remark]{Remark} 
\crefname{section}{section}{sections}
\crefname{subsection}{section}{sections}
\crefname{equation}{equation}{equations}
\newcommand\NN{\mathbb N}
\DeclareMathOperator{\id}{id}
\DeclareMathOperator{\im}{im}
\newcommand\TT{\mathcal T}
\newcommand\norm[1]{\left\lVert#1\right\rVert}
\title{Classification of tight contact structures on a solid torus}
\author{Zhenkun Li and Jessica Zhang}
\date{}
\begin{document}

\bibliographystyle{plain}

\maketitle

\begin{abstract}
It is a basic question in contact geometry to classify all non-isotopic tight contact structures on a given $3$-manifold. If the manifold has a boundary, we need also specify the dividing set on the boundary. In this paper, we answer the classification question completely for the case of a solid torus by writing down a closed formula for the number of non-isotopic tight contact structures with any given dividing set on the boundary of the solid torus. Previously, only a few special cases were known due to work by Honda. 
\end{abstract}

\section{Introduction}
In the past few decades, $3$-dimensional contact geometry has been studied extensively and has become one of the most important and powerful tools in the field of $3$-dimensional topology. It is a fundamental problem to classify all non-isotopic tight contact structures on a given $3$-manifold, with or without boundary. In contrast to the fast development of the field, the classification problem is still widely open. For closed $3$-manifolds, only some simple manifolds have been understood, which we summarize as follows:
\begin{itemize}
\item $S^3$ and $S^1\times S^2$: See Eliashberg \cite{eliashberg1992contact}.
\item $T^3$: See Kanda \cite{kanda1997classification}.
\item Lens spaces: See Etnyre \cite{etnyre2000lens}, Giroux \cite{giroux2000lens}, and Honda \cite{honda2000classification}.
\item Torus bundles over circles and circle bundles over closed surfaces: See Giroux \cite{giroux2000lens,giroux2001fibre} and Honda \cite{honda2000classification2}.
\item The Poincar\'e sphere: See Honda and Etnyre \cite{honda2001nonexistence}.
\item Small Seifert fibred spaces: This family of $3$-manifolds was first studied by Ghiggini and Sch\"onen\-berger \cite{ghiggini2003sfs}. Then Wu \cite{wu2006smallsfs} was able to classify tight contact structures on all but three families of small Seifert fibred spaces. Later those remaining families were studied by Ghiggini, Lisca, and Stipsicz \cite{Ghiggini2006sfs} and B\"ulent \cite{bulent2020sfs}, and the relation between the contact structures and Heegaard Floer homology on such manifolds was studied by Ghiggini, Lisca, and Stipsicz \cite{ghiggini2007sfs} and Matkovi\v{c} \cite{matkovic2018classification}, among others.
\end{itemize}

For a $3$-manifold $M$ with boundary, we impose the additional condition that the boundary $\partial M$ is convex. According to Giroux \cite{giroux1991convexite}, the local behavior of the contact structure near $\partial M$ is determined by a set of distinguished curves $\Gamma$ on $\partial M$, which we call the dividing set. As such, to classify tight contact structures for such an $M$, we need to classify tight contact structures for the pair $(M,\Gamma)$ for all possible pairs. The dividing set satisfies some constraints, such as Giroux's criterion in \Cref{thm: giroux's criterion}.

Very little is known for the classification problem on $3$-manifolds with boundaries. Some special cases of the following list of $3$-manifolds with boundaries have been studied: 
\begin{itemize}
\item $[0,1]\times T^2$: See Honda \cite{honda2000classification}.
\item $S^1\times F$ for a compact oriented surface with boundary: See Makar-Limanov \cite{maker-limanov1994solidtori} and Honda \cite{honda2000classification} for the case $F=D^2$, and Honda \cite{honda2000classification2} for a more general $F$.
\item $[0,1]\times \Sigma$ for a closed oriented surface $\Sigma$ with genus at least $2$: See Honda, Kazez, and Mati\'c \cite{honda2003hyperbolic} and Cofer \cite{cofer2004contact}.
\item The genus-two handlebody: See Ortiz \cite{ortiz2015convex}.
\end{itemize}
To our knowledge, apart from these partial results, the only (irreducible) $3$-manifold with boundary on which the tight contact structures have been fully classified is the case of a $3$-ball $D^3$: Eliashberg \cite{eliashberg1992contact} showed that there is a unique possible dividing set on $\partial D^3$ and a unique tight contact structure on this dividing set. In this paper, we add a second manifold to this very short list: the solid torus $S^1\times D^2.$ 

For a solid torus $M=S^1\times D^2$, a possible dividing set $\Gamma$ on its boundary $\partial M$ can be parametrized by a triple $(n,p,q)$, where
\begin{itemize}
\item $2n$ is the number of components of $\Gamma$, 
\item $p$ is the number of times each component of $\Gamma$ goes around the longitude $S^1\times\{1\}$, and
\item $q$ is the number of times each component of $\Gamma$ goes around the meridian $\{1\}\times \partial D^2$. 
\end{itemize}
On a solid torus, we can always perform Dehn twists along meridian disks to change the pair $(p,q)$ into $(p,q+kp)$. We also have that $(n,p,q)$ and $(n,-p,-q)$ parametrize the same dividing set. Following the convention laid out by Honda \cite{honda2000classification}, we use parametrizations of the form $(n,-p,q)$ and always assume that $0<q\leq p$ and $\gcd(p,q)=1$. Note that by Honda \cite{honda2000classification}, there is no tight contact structures if $p=0$. To better present our main result, we also adopt the following notation from Honda \cite{honda2000classification}: When $(p,q)\neq(1,1)$, write \[-\frac pq=[r_0,r_1,\dots,r_k]=r_0-\frac1{r_1-\frac1{r_2-\dots\frac1{r_k}}},\] where $r_i\le-2$ are integers, and define \begin{align*}r&=|(r_0+1)(r_1+1)\dots(r_{k-1}+1)r_k|\\ s&=|(r_0+1)(r_1+1)\dots(r_{k-1}+1)(r_k+1)|.\end{align*}
When $(p,q)=(1,1)$, define $r=1$ and $s=1$. The main theorem of the paper is the following. 

\begin{theorem}\label{thm: main theorem}
Suppose $M=S^1\times D^2$ is a solid torus. Let $\Gamma$ be a dividing set on $\partial M$ parametrized by $(n,-p,q)$, where $0<q\leq p$ and $\gcd(p,q)=1$. Let the pair of integers $(r,s)$ be defined as above.  Then the number of isotopy classes of tight contact structures on $M$ with dividing set $\Gamma=(n,-p,q)$ is precisely
\[N(n,-p,q)=C_n((r-s)n+s),\]
where $C_n$ is the $n$-th Catalan number:
\[C_n=\frac{1}{n+1}\binom{2n}n.\]
\end{theorem}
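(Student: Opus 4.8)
The plan is to reduce the classification to a combinatorial count by cutting $M$ along a convex meridian disk. First I would isotope a meridian disk $D$ (with $\partial D=\{1\}\times\partial D^2$) so that $\partial D$ is Legendrian and $D$ is convex; since $\gcd(p,q)=1$, after making $D$ efficient its boundary meets $\Gamma$ in exactly $2np$ points, so the dividing set $\Gamma_D$ of $D$ consists of $np$ properly embedded arcs. Cutting $M$ along $D$ produces a $3$-ball $B$ whose boundary dividing set is obtained from $\Gamma_D$ by edge-rounding, and by Eliashberg's uniqueness of the tight contact structure on $B$ \cite{eliashberg1992contact}, the isotopy class of the tight structure on $(M,\Gamma)$ is determined by the configuration of $\Gamma_D$. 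Thus the theorem becomes the problem of counting the configurations of dividing arcs on $D$ that (i) contain no boundary-parallel arc cutting off a disk that would create a homotopically trivial dividing curve after rounding (the tightness obstruction, cf. \Cref{thm: giroux's criterion}), modulo (ii) isotopies of $M$ fixing $\Gamma$.

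For the base case $n=1$ the two dividing curves meet $\partial D$ in $2p$ points, and the admissible arc configurations are organized by the Farey tessellation: the continued fraction $-\tfrac pq=[r_0,\dots,r_k]$ encodes a path in the Farey graph, and peeling off the corresponding basic slices of $T^2\times[0,1]$ with their sign choices recovers Honda's count $|(r_0+1)\cdots(r_{k-1}+1)r_k|=r$ \cite{honda2000classification}; this both fixes the normalization and supplies the building blocks for general $n$. The Catalan factor enters through the outer combinatorics: when the slope is standard (the case $(p,q)=(1,1)$, where $r=s=1$), the $2n$ endpoints on $\partial D$ must be joined by $n$ pairwise disjoint, hence non-crossing, chords, and the number of such non-crossing perfect matchings is exactly $C_n$. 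My strategy is to show that the count of admissible configurations organizes into a non-crossing connectivity pattern of the dividing arcs — contributing the Catalan factor $C_n$ — together with continued-fraction data distributed along the $n$ resulting families of strands, contributing the linear factor. Concretely, I would prove that the continued-fraction choices cannot be made freely and independently on all $n$ strands: a gluing/consistency argument forces the data to be uniform across the strands except at a single distinguished location, yielding $s$ ``global'' choices plus $(r-s)$ ``local'' choices at each of the $n$ strands, for a total of $(r-s)n+s$ per matching.

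The two halves of the resulting bijection are proved as usual in convex surface theory. For the upper bound, convex surface theory together with the Legendrian realization principle shows that every tight contact structure on $(M,\Gamma)$ gives rise, after thinning the disk, to an admissible configuration of the type above, so the count is at most $C_n((r-s)n+s)$. For the lower bound I would construct each candidate explicitly and establish two things: that it is tight, via Honda's state-traversal method exhibiting it as built from consistently signed basic slices, and that distinct candidates are non-isotopic. Distinctness among the continued-fraction choices can be detected by the relative Euler class of the contact structure relative to $\Gamma$ (equivalently, the rotation-number data of the basic slices), while distinctness among the non-crossing matchings follows from the bijection with dividing-arc configurations on $D$ via Eliashberg's uniqueness, since isotopic contact structures necessarily induce isotopic $\Gamma_D$.

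The main obstacle I anticipate is the lower bound, and specifically pinning down the exact linear factor $(r-s)n+s$. Verifying that the $C_n$ matchings give pairwise non-isotopic tight structures is already delicate, since one must simultaneously establish tightness (no state in the traversal is overtwisted) and rule out hidden isotopies of $M$ that permute strands; but the harder point is the precise interaction between the $n$ parallel strands and the continued-fraction layers. The non-additive, non-multiplicative shape of $(r-s)n+s=rn-s(n-1)$ reflects a rigid constraint — essentially that all strands must carry identical basic-slice data except for one free slot — and proving that this constraint is exactly correct (neither too permissive, which would inflate the count toward $r^n$, nor too rigid) is where the essential new work beyond Honda's $n=1$ analysis lies.
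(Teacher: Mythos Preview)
Your approach is genuinely different from the paper's, and it has a real gap at its core.

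The paper does \emph{not} attempt a direct bijective count of dividing-arc configurations on a meridian disk. Instead it stratifies $\mathcal{T}(n,-p,q)$ by the set of components $\gamma_i\subset\Gamma$ on which the contact structure admits a boundary-parallel bypass, encodes this in tuples $\vec{x}\in\{0,1\}^{2n}$, and applies inclusion--exclusion. The key inputs are: (i) adjacent $1$'s in $\vec{x}$ force overtwistedness; (ii) for $\lVert\vec{x}\rVert=k<n$, attaching the $k$ bypasses gives a bijection with $\mathcal{T}(n-k,-p,q)$; and (iii) for $\lVert\vec{x}\rVert=n$, one lands in $|\mathcal{T}_{\vec{x}}|=s$. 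Step (iii) is proved using \emph{instanton Floer theory}: graded bypass exact triangles and the nonvanishing/distinctness of contact invariants are what pin down the count to exactly $s$. Inclusion--exclusion then yields a linear recurrence in $n$ whose coefficients are Kaplansky's ``no two adjacent'' numbers, and two Catalan identities show $C_n((r-s)n+s)$ satisfies the same recurrence with the same initial data.

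Your proposal breaks at the point you flag as routine: the assertion that ``isotopic contact structures necessarily induce isotopic $\Gamma_D$'' is false for general $(n,-p,q)$. A single tight contact structure typically admits many non-isotopic states $(D,\Gamma_D)$ --- the paper uses this explicitly (Lemma on simultaneous existence of states, and the very definition of $\mathcal{T}_{\vec{x}}$), and already for $n=1$ there are $C_p$ candidate states but only $r$ tight structures. So the map \{states\} $\to$ \{tight structures\} is many-to-one, and the equivalence you need to quotient by is not ``isotopies of $M$ fixing $\Gamma$'' but the much subtler relation of bypass moves (state transitions) on the disk. Characterizing that equivalence combinatorially is essentially the whole problem; Honda's state-traversal algorithm does this in principle but gives no closed count.

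Consequently your heuristic for the linear factor --- ``uniform continued-fraction data on all strands except at one free slot'' --- has no mechanism behind it. Nothing in convex surface theory alone forces such a rigidity across strands, and indeed the paper has to invoke Floer-theoretic gradings on $SHI$ to establish the analogous rigidity (that the image of the last bypass map meets $\mathcal{T}(1,-p,q)$ in exactly $s$ classes). Absent a replacement for that input, your lower-bound distinctness argument and your upper-bound count both fail simultaneously: you can neither show the $C_n\cdot((r-s)n+s)$ candidates are pairwise non-isotopic, nor that every tight structure is one of them.
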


The cases when $n=1$ or $p=q=1$ have already been studied by Honda \cite{honda2000classification,honda2000classification2}. In \cite{honda2002gluing}, Honda also introduced an algorithm that theoretically classifies all possible tight contact structures on any handlebodies. The algorithm specific to the case of a solid torus was later simplified by Cofer \cite{cofer2016classification}. In this paper, we adopt another idea to compute the number $N(n,-p,q)$ in \Cref{thm: main theorem}. The key tool is that of bypasses, introduced by Honda \cite{honda2000classification}. 

A bypass is a half-disk, carrying a special contact structure, attached to a convex surface along an arc that intersects the dividing curve on the surface exactly three times. The half-disk does not change the topology of the surface but changes the local contact structure instead. If a bypass is attached from the interior side to the boundary of a $3$-manifold $M$, equipped with a contact structure $\xi$, we can think of it as peeling off a collar of $\partial M$ to obtain a new $3$-manifold $M'$ together with a new contact structure $\xi'$. For the case of a solid torus $M=S^1\times D^2$, usually the dividing set of $\xi'$ is simpler than that of $\xi$. Also, the bypass has a duality: If $\xi'$ is obtained from $\xi$ by peeling off a collar of $\partial M$, as above, then $\xi$ is obtained from $\xi'$ by gluing back the collar, which can be realized as attaching a bypass from the exterior of $\partial M'$. Studying this duality more carefully and using Floer theory, we are able to show that the correspondence $\xi\leftrightarrow \xi'$ always gives rise to a bijection in a proper sense on a solid torus. Hence we can apply induction and obtain a recurrence relation for the sequence $N(n,-p,q)$. Finally, we use this recurrence and previously known cases to derive a closed formula for $N(n,-p,q)$. 

{\bf Organization of the paper.} We begin in \Cref{sec: preliminaries} with several preliminaries, including those of convex surfaces, dividing sets, and bypasses. Then in \Cref{sec: solid torus}, we focus specifically on the solid torus $S^1\times D^2$ and prove several lemmas involving possible bypasses on it. Finally, we prove the main theorem in \Cref{sec: proof}. 

{\bf Acknowledgement.} This collaboration was made possible by the PRIMES-USA program. The first author was supported by his advisor Tom Mrowka’s NSF Grant 1808794. The authors would like to thank John Etnyre and Ko Honda for explaining some of the background of the problem.

\section{Preliminaries} \label{sec: preliminaries}
\subsection{Contact structures}
In general, contact structures can be defined on manifolds of dimension $2n+1$. We will, however, restrict our attention to (oriented) 3-manifolds. 

\begin{definition}
Let $M$ be a compact 3-manifold. If there exists a 1-form $\alpha$ on $M$ such that $\alpha\wedge d\alpha>0$ everywhere on $M$, then we call $\xi=\ker\alpha$ a \emph{(positive) contact structure} on $M$ and we say that $\alpha$ is a \emph{contact form} for $\xi$. Equivalently, a contact structure on $M$ is a maximally nonintegrable 2-plane distribution. 
\end{definition} 

It is known that every oriented compact smooth 3-manifold admits a contact structure \cite{martinet1971formes}. Note that, in general, when we refer to contact structures, we are in fact referring to the isotopy types of the contact structures; in particular, we henceforth consider contact structures to be unique only up to isotopy. 

\begin{definition}
A curve $L\subset M$ is called \emph{Legendrian} if at every point $x\in L$, we have $T_xL\subset\xi_x$. We distinguish between \emph{Legendrian curves}, which are closed, and \emph{Legendrian arcs}, which are not. 
\end{definition}

\begin{definition}
The \emph{twisting number} of the Legendrian curve $L$ relative to a framing $Fr$ of the curve $L$ is denoted $t(L,Fr)$ and is equal to the integer number of counterclockwise twists of $\xi$ along $L$ relative to $Fr$. 
\end{definition}

Often, when there is a clear surface $\Sigma$ containing $L$, we will use $t(L)$ as shorthand to denote $t(L,Fr_\Sigma)$. 

\begin{definition}
A 3-dimensional contact manifold $(M,\xi)$ is \emph{overtwisted} if there exists a disk $D^2\subset M$ whose boundary is Legendrian and its twisting number $t(\partial D^2)=0$. Such a disk is called an \emph{overtwisted disk}. A contact structure is called \emph{tight} if it does not contain an overtwisted disk. 
\end{definition} 

Work by Eliashberg \cite{eliashberg1989class} gives a classification of overtwisted contact structures on 3-manifolds. As such, we focus in this paper on classifying tight contact structures instead. 

\subsection{Convex surfaces} 
Our goal is to classify tight contact structures based on their dividing sets, which are defined on a specific class of surfaces known as convex surfaces, first introduced by Giroux \cite{giroux1991convexite}. 

\begin{definition}
If $\Sigma$ is embedded in $M$, then we call $\Sigma$ a \emph{convex surface} if there exists a contact vector field $X$, i.e., a vector field $X\subset TM$ whose flow preserves $\xi$, which is everywhere transverse to the surface $\Sigma$. 
\end{definition} 

\begin{definition}
Let $\Sigma\subset M$ be a convex surface and $X$ be a contact vector field that is everywhere transverse to $\Sigma$. Then the \emph{dividing set} on $\Sigma$ is the collection of points \[\Gamma_\Sigma=\{x\in\Sigma:X_x\in\xi_x\}.\] In other words, it is the set of points in $\Sigma$ such that the corresponding vector in $X$ belongs to the contact structure. We call a curve $\gamma\subset\Sigma$ a \emph{dividing curve} if it is a connected component of $\Gamma_\Sigma$. 
\end{definition} 

In general, the isotopy type of the dividing set is independent of the vector field $X$, so we can consider $\Gamma_\Sigma$ to be ``the'' dividing set of $\Sigma$. We denote by $\#\Gamma_\Sigma$ the number of components of the dividing set. 

\begin{theorem}[Giroux's criterion \cite{giroux1991convexite}]\label{thm: giroux's criterion}
Let $(M,\xi)$ be a contact manifold and let $\Sigma$ be a convex surface. If $\Sigma\ne S^2$, then $\Sigma$ has a tight neighborhood if and only if $\Gamma_\Sigma$ has no homotopically trivial closed components. If $\Sigma=S^2$, then $\Sigma$ has a tight neighborhood if and only if $\#\Gamma_\Sigma=1$.  
\end{theorem}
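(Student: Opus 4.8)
The plan is to prove both implications by passing to the $I$-invariant neighborhood $N=\Sigma\times[-1,1]$ that every convex surface carries, in which the contact vector field is $\partial_t$ and $\Gamma_\Sigma=\Gamma\times\{0\}$. The crucial input is Giroux's flexibility theorem: the germ of $\xi$ along $\Sigma$ is determined up to isotopy by the isotopy class of $\Gamma_\Sigma$ alone, so ``having a tight neighborhood'' is a property of $\Gamma_\Sigma$ and I am free to normalize the characteristic foliation at will. I would also invoke the Legendrian realization principle, the twisting formula $t(L,Fr_\Sigma)=-\tfrac12\#(L\cap\Gamma_\Sigma)$ for Legendrian curves $L\subset\Sigma$, and the Bennequin inequality $tb\le -1$ for Legendrian unknots in any tight contact manifold.

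For the direction ``a homotopically trivial component forces overtwistedness'' I would take an innermost homotopically trivial dividing curve $\gamma$, bounding a disk $D\subset\Sigma$ whose interior meets no other component of $\Gamma_\Sigma$, so that $\mathrm{int}(D)$ lies in a single region $R_\pm$. Using flexibility, I would normalize the foliation near $D$ so that $\gamma$ is a closed leaf and the plane field completes a full turn over $D$; equivalently, realize a Legendrian unknot $L$ bounding the embedded disk $D$ with $t(L)=0$, hence $tb(L)=0$ (the surface framing agrees with the disk framing), contradicting the Bennequin inequality. The same innermost argument dispatches the $S^2$ case when $\#\Gamma_\Sigma\ge 2$: every dividing curve on $S^2$ is homotopically trivial, so two or more of them yield an innermost bounding disk of the required type.

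For the converse (no homotopically trivial component with $\Sigma\ne S^2$, or $\#\Gamma_\Sigma=1$ for $S^2$, implies tight) I would exhibit an explicit tight model with the prescribed dividing set and transport tightness via flexibility. The hypothesis forces every component of $\Sigma\setminus\Gamma_\Sigma$ to have non-positive Euler characteristic, i.e. no disk regions, which is exactly the condition under which $\Sigma\times\RR$ carries an $\RR$-invariant contact structure inducing $\Gamma_\Sigma$ on each slice. I would then argue this invariant structure is tight, for instance by realizing it as a convex slice inside a manifold already known to be tight or fillable, and conclude by flexibility that any neighborhood with dividing set $\Gamma_\Sigma$ is contactomorphic to a piece of it. For $S^2$ with $\#\Gamma_\Sigma=1$, the single curve splits $S^2$ into two disks and models the standard convex sphere in the tight $S^3$.

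I expect the converse, and specifically the verification that the $\RR$-invariant model is genuinely tight, to be the main obstacle: tightness is a global, non-constructive property, so unlike the overtwisted direction it cannot be certified by a local model, and one must instead either produce a filling or rule out overtwisted disks directly. A secondary technical point lives in the realization step of the first direction, namely the isolating-curve issue: the innermost disk $D$ is an isolating region, so the Legendrian realization principle cannot be applied to $\gamma$ itself but must be applied to a nonisolating configuration (such as a suitable arc crossing $\gamma$), with the overtwisted disk extracted afterward.
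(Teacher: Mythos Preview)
The paper does not prove this statement at all: Theorem~\ref{thm: giroux's criterion} is quoted as a background result from Giroux \cite{giroux1991convexite} in the preliminaries section, with no argument given. So there is no ``paper's own proof'' to compare against; you have supplied a proof sketch where the authors simply cite the literature.

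That said, your sketch is a reasonable outline of the standard argument as it appears in expository sources (e.g.\ Honda's and Etnyre's lecture notes), and you have correctly flagged the two genuine difficulties. On the overtwisted direction, your diagnosis of the isolating-curve problem is accurate, but your suggested workaround (``apply LeRP to a nonisolating arc crossing $\gamma$ and extract the overtwisted disk afterward'') is vague and does not obviously succeed; the usual fix is instead to realize a closed curve parallel to $\gamma$ on the \emph{other} side, which is nonisolating provided $\Gamma_\Sigma$ has at least one further component, and to handle the residual case ($\gamma$ the unique dividing curve on $\Sigma\ne S^2$) by directly normalizing the characteristic foliation on the disk so that $\partial D$ becomes a closed leaf with a single interior elliptic singularity. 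On the tight direction, your plan to exhibit an $\RR$-invariant model and certify its tightness by embedding into a fillable manifold is the right shape, but ``realizing it as a convex slice inside a manifold already known to be tight or fillable'' is doing all the work and would need to be made explicit (for $S^2$ one uses the standard tight $S^3$; for higher-genus $\Sigma$ one typically passes to a circle bundle or a product symplectic filling).
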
 

Note that convex surfaces are particularly useful because Giroux's criterion gives us a way to determine whether a given convex surface has a tight neighborhood or not solely based on its dividing set $\Gamma_\Sigma$. Moreover, there is a particularly nice formula for the twisting number of a Legendrian curve on a convex surface.

\begin{theorem}[Kanda \cite{kanda1998thurston}] \label{thm: twisting number relative to framing} 
If $L$ is a Legendrian curve on a convex surface $\Sigma$, then the twisting number of $L$ relative to the framing induced by $\Sigma$ is \[t(L,Fr_{\Sigma})=-\frac12\#(L\cap\Gamma_\Sigma).\] 
\end{theorem}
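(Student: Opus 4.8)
The plan is to compute the twisting number directly inside an $\RR$-invariant neighborhood of the convex surface $\Sigma$. Since a Legendrian curve on $\Sigma$ is tangent to the characteristic foliation $\Sigma_\xi=\xi\cap T\Sigma$, which is everywhere transverse to $\Gamma_\Sigma$, the intersection $L\cap\Gamma_\Sigma$ is automatically transverse and finite. Flowing by the contact vector field $X$ identifies a neighborhood of $\Sigma$ with $\Sigma\times\RR$ so that $X=\partial_t$ and $\xi=\ker\alpha$ for a $t$-invariant contact form $\alpha=\beta+u\,dt$, where $\beta$ is a $1$-form and $u$ a function pulled back from $\Sigma$. By the definition of the dividing set, $\Gamma_\Sigma=\{u=0\}$.

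Next I would choose coordinates $(\theta,y)$ on an annular neighborhood of $L$ in $\Sigma$ with $L=\{y=0\}$, so that $\partial_\theta$ spans $TL$, the surface framing $Fr_\Sigma$ is the $\partial_y$-direction, and $\partial_t$ is transverse to $\Sigma$. Writing $\beta=\beta_1\,d\theta+\beta_2\,dy$, the Legendrian condition $TL\subset\xi$ forces $\beta_1(\theta,0)=0$. A short computation then shows that, along $L$, the contact plane $\xi$ meets the normal plane $\mathrm{span}(\partial_y,\partial_t)$ in the line spanned by $W=u\,\partial_y-\beta_2\,\partial_t$. By definition the twisting number is the winding of $\xi$ relative to $Fr_\Sigma$, which is exactly the winding number of the loop $\theta\mapsto\bigl(u(\theta),-\beta_2(\theta)\bigr)$ about the origin of the $(\partial_y,\partial_t)$-plane, up to the overall orientation sign.

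The heart of the argument is to feed in the contact condition. Expanding $\alpha\wedge d\alpha>0$ shows it is equivalent to $\beta\wedge du+u\,d\beta>0$ on $\Sigma$; restricting to $L$ (where $\beta_1=0$) and evaluating at a point $\theta_i$ with $u(\theta_i)=0$, i.e.\ where $L$ crosses $\Gamma_\Sigma$, collapses this inequality to $\beta_2(\theta_i)\,\partial_\theta u(\theta_i)<0$. Since $u|_L$ has simple zeros of alternating sign, the slopes $\partial_\theta u(\theta_i)$ alternate in sign, and hence so do the values $\beta_2(\theta_i)$. Consequently the loop $(u,-\beta_2)$ lands alternately on the positive and negative $\partial_t$-axis at consecutive crossings, while on each of the $\#(L\cap\Gamma_\Sigma)$ intervals between crossings $u$ has constant sign, confining the loop to a single open half-plane. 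Each such arc is therefore a clockwise half-turn, contributing $-\tfrac12$ to the winding number, and summing over all arcs gives $-\tfrac12\#(L\cap\Gamma_\Sigma)$, as claimed.

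I expect the main obstacle to be, first, extracting the sign relation $\beta_2\,\partial_\theta u<0$ cleanly from the contact condition: this is the only place where nonintegrability enters, and it is precisely what rules out a constant-sign $\beta_2$ (which would force winding zero). The second delicate point is pinning down the overall sign against the stated convention for counterclockwise twists. As a sanity check one can run the explicit invariant model $u=\sin(m\theta)$, $\beta_2=-\cos(m\theta)$, for which $(u,-\beta_2)=(\sin m\theta,\cos m\theta)$ winds $-m$ times around the origin; Giroux's flexibility theorem for characteristic foliations guarantees that this model computes the invariant for any $L$ with $\#(L\cap\Gamma_\Sigma)=2m$, giving an alternative, shorter route to the formula.
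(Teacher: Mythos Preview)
The paper does not give a proof of this statement: it is quoted as a known result of Kanda and only cited, so there is no ``paper's own proof'' to compare against. Your argument is therefore being judged on its own merits.

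Your proof is correct and is essentially the standard computation. The key points all check out: in the $t$-invariant model $\alpha=\beta_1\,d\theta+\beta_2\,dy+u\,dt$ with $\beta_1|_L=0$, the normal line $\xi\cap\mathrm{span}(\partial_y,\partial_t)$ is indeed spanned by $u\,\partial_y-\beta_2\,\partial_t$; the contact condition $\alpha\wedge d\alpha>0$ reduces on $L$ at a zero of $u$ to exactly $-\beta_2\,\partial_\theta u>0$; and this forces the vector $(u,-\beta_2)$ to make a clockwise half-turn on each subarc between consecutive crossings with $\Gamma_\Sigma$. Note also that $(u,-\beta_2)$ never vanishes along $L$, since $\beta_1=0$ there and $\alpha$ is nowhere zero, so the winding number is well defined. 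Your explicit model and appeal to Giroux flexibility give a clean alternative endgame, and your caution about the overall sign convention is warranted but resolved by that model. The only cosmetic point is that the phrase ``Legendrian curve on a convex surface'' should be read as a closed leaf of the characteristic foliation (avoiding singular points), which you implicitly assume when asserting transversality of $L$ with $\Gamma_\Sigma$; this is the standard hypothesis in Kanda's formulation.
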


\subsection{Legendrian and convex realization} 
The most useful surfaces for us will be (compact) convex surfaces with Legendrian boundary. Although Legendrian curves and convex surfaces appear to be fairly ''special,'' it turns out that both are in some sense ''generic.'' 

Indeed, most curves can be realized as Legendrian. In particular, the Legendrian realization principle, first proved by Kanda \cite{kanda1997classification} and later strengthened by Honda \cite{honda2000classification}, allows us to realize almost all embedded curves as Legendrian ones. 

\begin{definition}
Suppose $\mathcal C\subset\Sigma$ is a disjoint union of closed curves and arcs such that $\mathcal C$ is transverse to $\Gamma_\Sigma$, every arc of $\mathcal C$ begins and ends on $\Gamma_\Sigma$, and every component of $\Sigma\setminus\mathcal C$ intersects $\Gamma_\Sigma$ nontrivially. Then we say that $\mathcal C$ is \emph{nonisolating}. 
\end{definition} 

\begin{theorem}[Legendrian realization]\label{thm: legendrian realization}
Any nonisolating collection $\mathcal C$ of closed curves and arcs can be realized as Legendrian in the sense that there is an isotopy $\phi_s$ with $s\in[0,1]$ such that the following hold: 

\begin{enumerate}[(1)] 
\item $\phi_0=\id_\Sigma$, 
\item $\phi_s(\Sigma)$ is convex for all $s$, 
\item $\phi_s\rvert_{\partial\Sigma}=\id_{\partial\Sigma}$ for all $s$, 
\item $\phi_1(\Gamma_\Sigma)=\Gamma_{\phi_1(\Sigma)}$, and 
\item $\phi_1(\mathcal C)$ is Legendrian. 
\end{enumerate}
\end{theorem}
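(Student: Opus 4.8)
The plan is to reduce the statement to Giroux's flexibility theorem, the standard tool behind the Legendrian realization principle. Recall that a singular foliation $\mathcal F$ on $\Sigma$ is said to be \emph{divided by} $\Gamma_\Sigma$ if there is a vector field $Y$ directing $\mathcal F$ and an area form $\omega$ such that the divergence $\mathrm{div}_\omega Y$ is positive on one region $\Sigma_+$ of $\Sigma\setminus\Gamma_\Sigma$, negative on the complementary region $\Sigma_-$, and $Y$ is transverse to $\Gamma_\Sigma$ pointing out of $\Sigma_+$. Giroux's flexibility theorem asserts that any foliation divided by $\Gamma_\Sigma$ is realized, after an isotopy $\phi_s$ of $\Sigma$ through convex surfaces with $\Gamma_{\phi_s(\Sigma)}$ matching $\Gamma_\Sigma$ and with $\phi_0=\id$, as the characteristic foliation of $\phi_1(\Sigma)$. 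Because the leaves of the characteristic foliation are integral curves of $\xi\cap T\Sigma$ and are therefore Legendrian, and because the tangent plane at a singular point equals $\xi$ (so any curve through it is tangent to $\xi$), it suffices to build a foliation $\mathcal F$, divided by $\Gamma_\Sigma$, in which $\mathcal C$ is a union of leaves and singular points. Then $\phi_1(\mathcal C)$ consists of leaves and singularities of the characteristic foliation of $\phi_1(\Sigma)$ and is hence Legendrian, while conditions (1)--(4) are delivered directly by the flexibility theorem.

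To construct $\mathcal F$, I would work region by region on $\Sigma\setminus\mathcal C$. First I would choose $Y$ so that it is tangent to every closed curve and every arc of $\mathcal C$; this is compatible with the required transversality of $Y$ to $\Gamma_\Sigma$ precisely because $\mathcal C$ is embedded, its arcs terminate on $\Gamma_\Sigma$, and $\mathcal C$ is transverse to $\Gamma_\Sigma$ at its crossing points. On each component $R$ of $\Sigma\setminus\mathcal C$, the nonisolating hypothesis guarantees $R\cap\Gamma_\Sigma\ne\emptyset$; choosing a point of this intersection (necessarily off $\mathcal C$) and using that $\Gamma_\Sigma$ locally separates $\Sigma$ into its two sides, one sees that $R$ contains points of both $\Sigma_+$ and $\Sigma_-$. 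Inside $R$ I would place a source in $R\cap\Sigma_+$ and a sink in $R\cap\Sigma_-$ and extend $Y$ so that it is tangent to $\partial R\subset\mathcal C$, crosses $\Gamma_\Sigma\cap R$ transversally, and flows from the source to the sink. Selecting $\omega$ so that $\mathrm{div}_\omega Y$ carries the correct sign on each side then makes $\mathcal F$ divided by $\Gamma_\Sigma$, with $\mathcal C$ appearing as a union of leaves joined at the inserted singularities.

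The crux of the argument --- and the place where the nonisolating hypothesis is indispensable rather than merely convenient --- is exactly this regional step. If some component $R$ of $\Sigma\setminus\mathcal C$ were disjoint from $\Gamma_\Sigma$, it would lie entirely in one region, say $\Sigma_+$, and the directing field $Y$ would be tangent to all of $\partial R$, so that the divergence theorem would force
\[\int_R \mathrm{div}_\omega Y\,\omega=\oint_{\partial R}\langle Y,\nu\rangle\,ds=0,\]
contradicting $\mathrm{div}_\omega Y>0$ throughout $R$; no divided foliation could then have $\mathcal C$ among its leaves. The nonisolating condition removes precisely this obstruction. I expect the main technical work to lie in verifying that the regional pieces of $Y$ and $\omega$ patch together into a smooth global foliation that is genuinely divided by $\Gamma_\Sigma$ (and not merely by some isotopic multicurve), and in arranging all modifications to be supported in the interior of $\Sigma$ so that the resulting isotopy satisfies $\phi_s\rvert_{\partial\Sigma}=\id$ as required by condition (3). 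With $\mathcal F$ so constructed, a direct application of Giroux's flexibility theorem completes the proof.
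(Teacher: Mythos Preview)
The paper does not actually prove this theorem; it is quoted without proof as a result due to Kanda and Honda, so there is no in-paper argument to compare against. Your sketch follows the standard route from Honda's original proof: reduce to Giroux's flexibility theorem by constructing a singular foliation divided by $\Gamma_\Sigma$ in which $\mathcal C$ is a union of leaves, using the nonisolating hypothesis to guarantee each complementary region meets both $\Sigma_+$ and $\Sigma_-$ so that sources and sinks can be inserted. The divergence-theorem observation explaining why nonisolating is necessary is exactly the obstruction Honda identifies. The one place that would need care in a full write-up is the patching of the regional vector fields into a global Morse--Smale foliation with the correct behavior near $\partial\Sigma$ so that the flexibility isotopy can be taken rel boundary, but you have flagged this yourself; the overall strategy is sound and matches the literature.
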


Note that every closed curve $C\subset\Sigma$ which is transverse to $\Gamma_\Sigma$ and which intersects $\Gamma_\Sigma$ nontrivially is nonisolating, and is thus realizable as Legendrian. 

There is a similar principle which allows us to perturb a given surface to be convex. 

\begin{prop}[Giroux \cite{giroux1991convexite}] 
Suppose that $\Sigma$ is closed, oriented, and properly embedded. Then there is a $C^\infty$-small isotopy which makes $\Sigma$ convex. 
\end{prop}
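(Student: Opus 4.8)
The plan is to transfer the problem from the surface to its \emph{characteristic foliation}. Recall that $\Sigma_\xi$ is the singular line field $\ker(\alpha|_{T\Sigma})$ cut out on $\Sigma$ by intersecting $\xi$ with $T\Sigma$; its singularities are exactly the points where $\xi=T\Sigma$. The strategy rests on Giroux's reformulation of convexity: $\Sigma$ is convex if and only if $\Sigma_\xi$ is \emph{divided}, meaning that there exist a multicurve $\Gamma\subset\Sigma$ transverse to $\Sigma_\xi$ and disjoint from its singularities, a vector field $Y$ directing $\Sigma_\xi$, and an area form $\omega$ on $\Sigma$, such that $Y$ expands $\omega$ on one component $\Sigma_+$ of $\Sigma\setminus\Gamma$, contracts $\omega$ on the other component $\Sigma_-$, and points transversally out of $\Sigma_+$ along $\Gamma$. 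Thus the entire problem reduces to producing, after a $C^\infty$-small isotopy of $\Sigma$, a characteristic foliation that admits such a dividing multicurve.

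First I would make the characteristic foliation dynamically generic. A $C^\infty$-small isotopy of $\Sigma$ perturbs $\Sigma_\xi$, and the key flexibility is that such isotopies realize essentially arbitrary small perturbations of the foliation: writing $\Sigma$ locally as a graph over a model surface, a change in the $2$-jet of the graphing function produces an independent change in $\Sigma_\xi$. Using this flexibility, I would argue by a jet-transversality argument that one can independently adjust the local picture of $\Sigma_\xi$---near each singularity one perturbs the $1$-jet of $\Sigma$ so that the singularity becomes nondegenerate (an elliptic source or sink, or a hyperbolic saddle), and away from the singularities one invokes Peixoto's theorem for flows on closed oriented surfaces to arrange that every periodic orbit is hyperbolic and that no two saddles are joined by a separatrix. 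The contact condition $\alpha\wedge d\alpha>0$ endows each singularity and each periodic orbit with a well-defined sign, obtained by comparing the contact orientation with that of $\Sigma$. The outcome is a Morse-Smale characteristic foliation with signed singularities and signed closed orbits.

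It remains to show that a Morse-Smale characteristic foliation is divided, which is where I expect the real work to lie. The idea is to build $\Sigma_+$ as a closed regular neighborhood of the ``positive skeleton''---the positive elliptic points, the positive (repelling) periodic orbits, and the unstable separatrices of the positive saddles---and to build $\Sigma_-$ symmetrically from the negative skeleton. The absence of saddle-saddle connections guarantees that every nonsingular trajectory runs from the positive skeleton to the negative skeleton, so that these two neighborhoods exhaust $\Sigma$ and meet along an embedded multicurve $\Gamma$ transverse to $\Sigma_\xi$. The genuinely delicate point is to produce a directing vector field $Y$ and an area form $\omega$ with $\mathcal{L}_Y\omega>0$ on $\Sigma_+$ and $\mathcal{L}_Y\omega<0$ on $\Sigma_-$. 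I would do this by writing $Y$ and $\omega$ in the standard normal forms of $\Sigma_\xi$ near each elliptic point, saddle, and periodic orbit so as to force the correct sign of the divergence locally, and then patch the local data with a partition of unity subordinate to the Morse-Smale decomposition, using the transversality of separatrices to control the sign of $\mathcal{L}_Y\omega$ across the gluing regions. The main obstacle is the interaction of the periodic orbits with the expansion condition: a periodic orbit is a closed trajectory of $Y$, so $\omega$ must be chosen on its annular neighborhood so that $Y$ strictly expands (respectively contracts) area there, consistently with the orbit lying in $\Sigma_+$ (respectively $\Sigma_-$); this is possible precisely because, after the genericity step, each periodic orbit is hyperbolic and hence strictly repelling or attracting. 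Once $Y$, $\omega$, and $\Gamma$ are assembled, $\Sigma_\xi$ is divided, and Giroux's criterion yields that $\Sigma$ is convex after a $C^\infty$-small isotopy.
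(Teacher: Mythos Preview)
The paper does not actually prove this proposition: it is stated as a preliminary result and attributed to Giroux \cite{giroux1991convexite} without any argument given. So there is no ``paper's own proof'' to compare against; the authors simply cite the result and move on.

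That said, your sketch is the standard Giroux argument underlying the cited reference, and the outline is correct. You reduce convexity to the existence of a dividing multicurve for the characteristic foliation, then perturb $\Sigma$ $C^\infty$-small so that $\Sigma_\xi$ becomes Morse--Smale, and finally observe that a Morse--Smale singular foliation on a closed oriented surface is always divided by taking $\Sigma_\pm$ to be neighborhoods of the positive/negative skeleta. A couple of small remarks: the invocation of Peixoto is a bit loose, since Peixoto's theorem concerns perturbations of vector fields rather than perturbations induced by isotopies of an embedded surface in a contact manifold, so one should check (as Giroux does) that $C^\infty$-small isotopies of $\Sigma$ really do realize generic perturbations of $\Sigma_\xi$; and in the last step, the construction of $Y$ and $\omega$ with the correct sign of divergence is more cleanly handled by choosing a single area form and rescaling the directing vector field by a positive function, rather than patching local models. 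But these are refinements, not gaps---your overall strategy matches Giroux's original proof.
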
 

This proposition allows us to always tacitly assume that our three-manifold $M$ has a convex boundary, which we will henceforth do. Honda \cite{honda2000classification} proved a version of this proposition for surfaces with boundary. 

\begin{prop} \label{thm: convex flexibility} 
Suppose that $\Sigma$ is compact, oriented, and properly embedded with Legendrian boundary. Suppose that $t(\gamma,Fr_\Sigma)\le0$ for all connected components $\gamma$ of $\partial\Sigma$. Then there is a $C^0$-small perturbation near the boundary which fixes $\partial\Sigma$, followed by a $C^\infty$-small perturbation of the peturbed surface which fixes a neighborhood of $\partial\Sigma$, such that the final surface is convex. 
\end{prop}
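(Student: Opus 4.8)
The plan is to reduce the statement to the assertion that the characteristic foliation $\Sigma_\xi$ can be made \emph{divided}, exploiting Giroux's characterization that a compact surface is convex precisely when its characteristic foliation admits a dividing multicurve. The interior of $\Sigma$ will be handled by the genericity result for closed surfaces recorded in the preceding proposition, since a Morse--Smale characteristic foliation is automatically divided. Thus the entire content is local near the Legendrian boundary $\partial\Sigma$, each component of which is a closed leaf of $\Sigma_\xi$, and the hypothesis $t(\gamma,Fr_\Sigma)\le 0$ is exactly what is needed to build a convex collar there.

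First I would fix a collar $\gamma\times[0,\epsilon]$ of each boundary component $\gamma$ and examine the germ of $\xi$ along the Legendrian curve $\gamma$. Writing $n=t(\gamma,Fr_\Sigma)\le 0$, the contact planes undergo $|n|$ twists relative to $Fr_\Sigma$ as one traverses $\gamma$. Using this twisting I would perform a $C^0$-small perturbation of $\Sigma$ supported in the collar and fixing $\partial\Sigma$ pointwise, putting the characteristic foliation on the collar into a standard model: for $n<0$ the nearby leaves limit onto $\gamma$ through an alternating chain of elliptic and hyperbolic singular points placed along $\gamma$, and the model carries an explicit dividing multicurve meeting $\gamma$ transversally in $-2n=2|n|$ points, in agreement with Kanda's formula in \Cref{thm: twisting number relative to framing}; for $n=0$ the collar is foliated by leaves parallel to $\gamma$ together with a single dividing curve running parallel to and just inside $\gamma$. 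In either case the resulting collar foliation is divided.

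With the collar in standard divided form, I would then apply the closed-surface genericity statement above to the interior: a $C^\infty$-small perturbation supported away from a neighborhood of $\partial\Sigma$ makes the characteristic foliation Morse--Smale in the interior, hence divided there as well, while leaving the divided collar untouched. Matching the two dividing sets along the overlap produces a global dividing set for $\Sigma_\xi$, so $\Sigma$ is convex. The two-stage structure asserted in the statement ($C^0$-control near $\partial\Sigma$, followed by a $C^\infty$-small move fixing a neighborhood of the boundary) corresponds exactly to these two perturbations.

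The main obstacle is the boundary model in the second step. Producing the standard divided collar requires genuinely altering the \emph{type} of the characteristic foliation at the closed leaf $\gamma$ — introducing and arranging the singular chain along it — which cannot be achieved by a $C^\infty$-small perturbation; this is precisely why only $C^0$-control near $\partial\Sigma$ can be claimed. The crux is to verify that non-positivity of the twisting is exactly the condition making such a collar realizable: when $n\le 0$ the prescribed singular chain can be realized as the characteristic foliation of a $C^0$-small graph over the collar, whereas a positive twist would force a closed leaf of $\Sigma_\xi$ bounding a singularity-free annulus in every collar, an obstruction to the foliation being divided that no small perturbation can remove. Carefully constructing this perturbation, and confirming that the $|n|$-fold singular chain integrates to a dividing set meeting $\gamma$ in $2|n|$ points, is where the real work lies.
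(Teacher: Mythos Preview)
The paper does not prove this proposition at all; it is quoted from Honda \cite{honda2000classification} as a background result, with no argument given. So there is no ``paper's proof'' to compare against.

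That said, your sketch is essentially the standard proof as it appears in Honda's paper: put the characteristic foliation into a standard divided model on a collar of each Legendrian boundary component using a $C^0$-small perturbation (this is where $t\le 0$ is used and why only $C^0$-smallness is achievable), then invoke $C^\infty$-genericity of Morse--Smale foliations in the interior relative to the fixed collar. One small point to tighten is the $n=0$ case: the collar is not literally ``foliated by leaves parallel to $\gamma$''; rather, the perturbation arranges that the closed leaf $\gamma$ has nondegenerate (attracting or repelling) linear holonomy, which is what produces the parallel dividing curve and makes the collar divided. Also, when you say you ``apply the closed-surface genericity statement to the interior,'' note that the interior is open; the argument actually appeals to a relative Morse--Smale genericity for compact surfaces with the collar foliation held fixed, not the closed statement per se. With those minor clarifications your outline matches Honda's argument.
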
 

With this, along with Legendrian realization, it will be possible for us to perturb surfaces to be convex with Legendrian boundary. 

\subsection{Bypasses} 
The fundamental tool for us will be that of the bypass attachment, either from the interior or from the exterior, which will allow us to simplify the contact structure. 

\begin{definition} \label{defn: bypass}
Let $\Sigma$ be a convex surface on the contact 3-manifold $(M,\xi)$. Then a \emph{bypass} is a convex half-disk $B$ with Legendrian boundary such that the following requirements are satisfied: 
\begin{enumerate}[(1)]
\item The arc $\alpha=B\cap\Sigma$ is Legendrian and intersects $\Gamma_\Sigma$ at exactly three points $p_1,p_2,p_3$, where $p_1$ and $p_3$ are the endpoints of $\alpha$, 
\item The half-disk $B$ is transverse to the surface $\Sigma$, and 
\item The twisting number of the boundary of $B$ is $t(\partial B)=-1$. 
\end{enumerate} 
\end{definition} 

An example of a bypass can be seen in \Cref{fig:bypass-disk}. 

\begin{figure}[htbp]
\begin{center}
\includegraphics[scale=0.7]{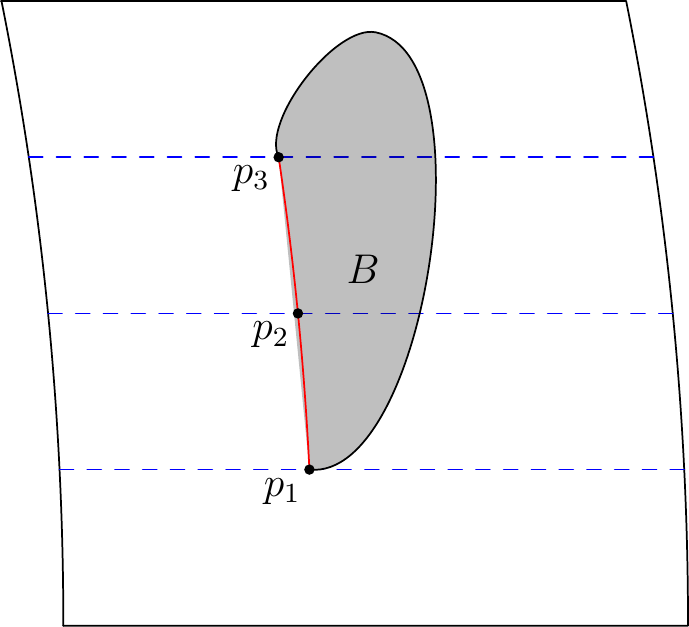} 
\caption{A bypass disk $B$ with attaching arc $\alpha$ shown in red.}
\label{fig:bypass-disk}
\end{center}
\end{figure}

With a bypass as in the above definition, the arc $\alpha$ is known as the \emph{arc of attachment}, and we say that $B$ is a bypass along $\alpha$ on $\Sigma$. We call a bypass \emph{exterior} or \emph{interior} depending on whether $B$ is attached on the side coinciding with the orientation of $\Sigma$ or not. 

Consider the change in the dividing set shown in \Cref{fig:bypass-ds}. In words, it is performed by splitting each of the three arcs in the dividing set that intersects the attaching arc at the point of intersection. Then, for the two points on either end of the attaching arc, if we consider the attaching arc to be going up from the identified endpoint, then we join the two arcs on the left. The two remaining arcs (which would be top left and bottom right from either direction) are then joined. 

\begin{figure}[htbp] 
\begin{center} 
\includegraphics[scale=0.8]{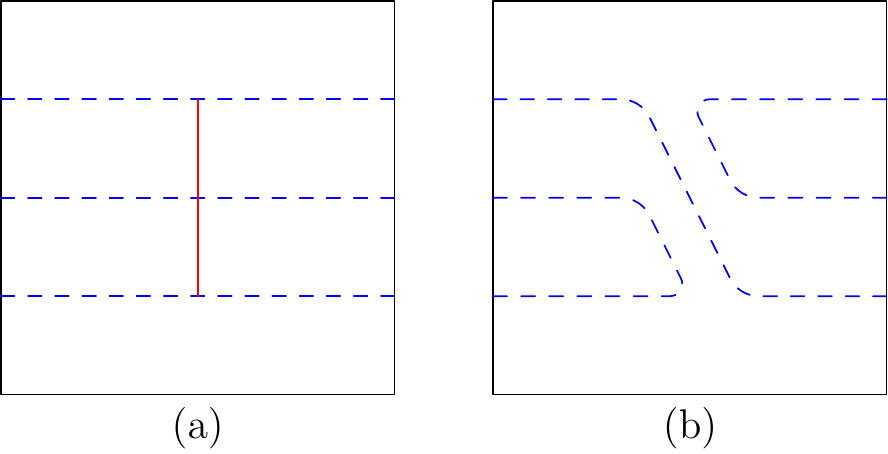} 
\caption{Attaching a bypass on the exterior along the red Legendrian arc $\alpha$ in (a) results in a change in dividing set as seen in (b).}
\label{fig:bypass-ds}
\end{center} 
\end{figure} 

\begin{lemma}[Bypass Attachment Lemma \cite{honda2000classification}] 
If $B$ is an exterior bypass along a convex surface $\Sigma$, then there exists a neighborhood $N$ of $\Sigma\cup D$ with convex $\partial N=\Sigma-\Sigma'$ such that the new dividing set $\Gamma_{\Sigma'}$ is obtained from $\Gamma_\Sigma$ in the manner detailed above. 
\end{lemma}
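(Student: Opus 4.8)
The plan is to prove the statement in three stages: first identify the neighborhood $N$ topologically, then normalize the contact structure near the bypass to a standard local model, and finally read off the new dividing set from that model. Topologically, attaching the half-disk $B$ to $\Sigma$ along the Legendrian arc $\alpha = B\cap\Sigma$ does not change the homotopy type of $\Sigma$, so a regular neighborhood $N$ of $\Sigma\cup B$ deformation retracts onto $\Sigma$ and is diffeomorphic to $\Sigma\times[0,1]$. One boundary component is the original surface $\Sigma=\Sigma\times\{0\}$; write $\Sigma'=\Sigma\times\{1\}$ for the other, so that $\partial N=\Sigma-\Sigma'$ as oriented surfaces. It then remains to arrange that $\Sigma'$ is convex and to compute $\Gamma_{\Sigma'}$.

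For the normalization, I would first record that, by \Cref{thm: twisting number relative to framing} applied to the Legendrian curve $\partial B$ on the convex disk $B$, the hypothesis $t(\partial B)=-1$ forces $\#(\partial B\cap\Gamma_B)=2$, so $\Gamma_B$ is a single dividing arc in the standard bypass position. I would then invoke the standard neighborhood theorem for $\partial B$ together with \Cref{thm: convex flexibility} and the relative flexibility of convex surfaces to put $\xi$ near $\alpha\cup B$ into an explicit local model: coordinates on a thickened half-disk in which $\Gamma_B$ and the three intersection points $p_1,p_2,p_3$ of $\alpha$ with $\Gamma_\Sigma$ occupy their model positions. The point of this step is that, because tight contact germs on a ball are unique (Eliashberg \cite{eliashberg1992contact}) and convex neighborhoods are flexible, the local contact geometry of any bypass satisfying \Cref{defn: bypass} agrees with this single model up to contactomorphism; hence the dividing-set change can be computed once and for all.

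The heart of the argument is the local computation of $\Gamma_{\Sigma'}$. Outside a small neighborhood of $\alpha$, the contact structure on $N$ is an $I$-invariant product and the characteristic foliation is unchanged, so $\Sigma'$ inherits the dividing set of $\Sigma$ verbatim there. Inside the model neighborhood of $\alpha$, I would write down the explicit tight contact form on the thickened bypass and compute $\Gamma_{\Sigma'}=\{x\in\Sigma':X_x\in\xi_x\}$ directly for the pushed-off surface, where $X$ is the transverse contact vector field. This yields precisely the reconnection in the statement: the three dividing arcs are severed at $p_1,p_2,p_3$, the two strands on the left of $\alpha$ are joined, and the two remaining strands are then joined. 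Matching this local picture to the unchanged product region, and rounding the corner of $N$ along $\partial\Sigma$ so the arcs close up consistently, completes the identification of $\Gamma_{\Sigma'}$.

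The main obstacle is the normalization step together with the explicit model computation: one must show that an arbitrary bypass, assumed only to satisfy the three conditions of \Cref{defn: bypass}, is contactomorphic near $\alpha\cup B$ to the single standard model, and then carry out the coordinate computation of the dividing set on the far boundary. The subtlety is that convexity of $\Sigma'$ is not automatic -- it requires the flexibility of \Cref{thm: convex flexibility} and care in rounding corners -- and that the reconnection rule must be checked to be orientation-consistent, so that $\Gamma_{\Sigma'}$ is again an embedded multicurve compatible with \Cref{thm: giroux's criterion}. Once the local model is in hand, however, the global statement follows immediately from the locality of the construction noted above.
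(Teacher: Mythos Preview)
The paper does not supply its own proof of this lemma: it is stated as a cited result from Honda \cite{honda2000classification}, with no accompanying proof environment. There is therefore nothing in the paper to compare your argument against.

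That said, your outline is the standard route Honda takes: reduce to a local model using the uniqueness of tight contact germs on the ball and the flexibility of convex surfaces, then compute the dividing set on the far face explicitly in coordinates. As a sketch this is sound. The place where real work hides is exactly where you flag it: the normalization step requires carefully invoking Giroux flexibility and edge-rounding to make $\Sigma'$ genuinely convex, and the explicit coordinate model (Honda writes it down in \cite{honda2000classification}) is what actually pins down the reconnection rule rather than its mirror. If you were to flesh this out, that computation is the only part that is not soft.
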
 

Note that, in the case of an interior bypass, we can simply imagine reversing the orientation of $\Sigma$. Then we have an exterior bypass instead and can apply the bypass attachment lemma. This is equivalent to performing the mirror operation which involves cutting the components at the attaching arc, and  joining the bottom right edges and the top left edges. 

Even if we do not know of the existence of a bypass, that is, a half-disk satisfying the properties of a bypass half-disk, the bypass attachment lemma tells us what would happen to the dividing set if such a bypass were to exist. Thus we at times consider \emph{abstract bypass attachments} in which we consider how the dividing set would be affected if a bypass were to exist. 

In the case that an abstract bypass move does not alter the dividing set (up to isotopy), we call it \emph{trivial}. The following proposition shows that such a bypass is indeed trivial in the sense that it does not alter the contact structure. 

\begin{prop}[Honda \cite{honda2002gluing}] \label{prop: trivial bypasses are trivial} 
Let $\Sigma$ be a closed or compact convex surface with Legendrian boundary. Suppose that a trivial bypass $B$ is attached to $\Sigma$ along the attaching arc $\delta\subset\Sigma$. Then there exists a neighborhood of $\Sigma\cup_\delta B$ which is isotopic to the standard $I$-invariant neighborhood of $\Sigma$, which is simply $\Sigma\times[0,1]$ such that $\Sigma=\Sigma\times\{0\}$. 
\end{prop}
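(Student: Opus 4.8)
The plan is to reduce the statement to two facts: that a trivial bypass can always be realized inside a standard $I$-invariant neighborhood of $\Sigma$, and that the neighborhood of $\Sigma\cup_\delta B$ depends only on the attaching arc $\delta$ and the side of attachment. Throughout I would lean on the principle, implicit in \Cref{thm: convex flexibility} and Giroux's convex surface theory, that the germ of a tight contact structure along a convex surface is determined up to isotopy by its dividing set.

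First I would fix the topological picture. A neighborhood $N$ of $\Sigma\cup_\delta B$ is diffeomorphic to $\Sigma\times[0,1]$ with $\Sigma=\Sigma\times\{0\}$, and its other boundary component is a parallel convex copy $\Sigma'=\Sigma\times\{1\}$. By the Bypass Attachment Lemma, $\Gamma_{\Sigma'}$ is obtained from $\Gamma_\Sigma$ by the bypass move along $\delta$, and since $B$ is trivial this new dividing set is isotopic to $\Gamma_\Sigma$ inside $\Sigma$. So $N$ is a product cobordism between two convex surfaces carrying the same dividing set, and the task is to upgrade this to an isotopy with the $I$-invariant model.

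Next I would make the triviality hypothesis explicit and use it to build a model bypass. Writing $p_1,p_2,p_3$ for the three points of $\delta\cap\Gamma_\Sigma$, triviality forces the middle point $p_2$ to be joined to an endpoint along a sub-arc of $\Gamma_\Sigma$ that, together with a sub-arc of $\delta$, bounds a disk $D_0\subset\Sigma$ disjoint from the remaining dividing curves. Inside a standard $I$-invariant collar $\Sigma\times[0,1]$ I would then construct an explicit convex half-disk $B_0$ attached along this same $\delta$: I would use \Cref{thm: legendrian realization} to realize $\delta$ and $\partial B_0$ as Legendrian and arrange the boundary twisting $t(\partial B_0)=-1$ required in \Cref{defn: bypass}, keeping $B_0$ supported near $D_0$ so that it avoids the essential part of $\Gamma_\Sigma$. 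Because $B_0$ lives inside the $I$-invariant collar, the neighborhood $N(\Sigma\cup_\delta B_0)$ is automatically $I$-invariant.

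The remaining step, which I expect to be the main obstacle, is a uniqueness statement: any two bypasses attached to $\Sigma$ along the same arc $\delta$ and from the same side have neighborhoods that are contactomorphic rel $\Sigma$. I would prove this by passing to a standard local normal form for the contact structure near a bypass half-disk, where the half-disk and the adjacent dividing set take a fixed model shape, and then matching the given $B$ to the model $B_0$ by a $C^\infty$-small perturbation supplied by Giroux flexibility together with the germ-uniqueness principle. Chaining the two facts, namely existence of the model bypass inside the collar and uniqueness of bypass neighborhoods, gives that $N(\Sigma\cup_\delta B)$ is isotopic to $N(\Sigma\cup_\delta B_0)\subset\Sigma\times[0,1]$, hence to the standard $I$-invariant neighborhood, as claimed.
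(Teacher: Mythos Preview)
The paper does not give its own proof of this proposition; it is quoted from Honda \cite{honda2002gluing} and used as a black box. So there is no in-paper argument to compare against, and your outline should be measured against Honda's original.

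Your two-step strategy---build a model trivial bypass $B_0$ inside the $I$-invariant collar, then show any bypass along $\delta$ has a neighborhood contactomorphic to that of $B_0$---is exactly the shape of Honda's argument, and your identification of the triviality condition (the half-disk $D_0$ cobounded by sub-arcs of $\delta$ and $\Gamma_\Sigma$) is correct and is what makes the construction of $B_0$ possible.

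The soft spot is your justification of the uniqueness step. Invoking ``a standard local normal form near a bypass half-disk'' together with ``Giroux flexibility and the germ-uniqueness principle'' is too vague: germ-uniqueness along a convex surface tells you the contact structure \emph{near} $B$ is standard, but it does not by itself give an ambient isotopy carrying $B$ to $B_0$ rel $\Sigma$. The clean way to close this, and the way Honda does it, is to localize. The bypass attachment only modifies the contact structure in a neighborhood of $\delta\cup B$, which one can take to be a $3$-ball $D\times[0,1]$ with $D\subset\Sigma$ a disk containing $\delta$. After the trivial bypass, the dividing sets on $D\times\{0\}$ and $D\times\{1\}$ agree, the structure is tight (it embeds in a tight manifold), and then Eliashberg's uniqueness of the tight contact structure on $B^3$ rel convex boundary forces it to be the $I$-invariant one. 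That replaces your hand-wave with a single hard input you already have available.
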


In general, even though we do not get bypasses ``for free,'' bypasses are relatively abundant on convex surfaces in contact manifolds. 

\begin{definition}
If $\Sigma$ is a convex surface with nonempty Legendrian boundary, then a component $\gamma\subseteq\Gamma_\Sigma$ is called a \emph{boundary-parallel} dividing curve if it cuts off a half-disk $B$ in $\Sigma$ such that $B\cap\Gamma_\Sigma=\gamma$. 
\end{definition} 

\begin{prop}[Honda \cite{honda2000classification}] \label{prop: bypass abundance} 
Suppose that $\Sigma$ is a convex surface with Legendrian boundary and that $\gamma$ is boundary-parallel. If $\Sigma$ is not a disk with twisting number $t(\partial\Sigma,Fr_\Sigma)=-1$, then there exists a bypass half-disk containing the half-disk cut off by $\gamma$. 
\end{prop}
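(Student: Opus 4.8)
The plan is to produce the attaching arc first and then promote it to an honest bypass half-disk. Write $B_0 \subset \Sigma$ for the half-disk cut off by the boundary-parallel curve $\gamma$, so that $\partial B_0 = \gamma \cup c$ with $c \subset \partial\Sigma$ a Legendrian arc and $B_0 \cap \Gamma_\Sigma = \gamma$. Let $x_1, x_2 \in \partial\Sigma$ be the endpoints of $\gamma$; since no other component of $\Gamma_\Sigma$ enters $B_0$, the points $x_1, x_2$ are consecutive intersection points of $\Gamma_\Sigma$ with $\partial\Sigma$ along $c$. By \Cref{thm: twisting number relative to framing}, the number of intersection points of $\Gamma_\Sigma$ with the relevant boundary component equals $-2\,t(\partial\Sigma, Fr_\Sigma)$, so outside of one excluded configuration there is at least one further intersection point, and hence a dividing curve $\delta_i$ adjacent to $\gamma$ on each side. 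First I would fix an arc $\alpha$ in $\Sigma$ whose two endpoints lie on these adjacent dividing curves $\delta_1, \delta_2$ and which crosses $\Gamma_\Sigma$ exactly once in its interior, at a point of $\gamma$; thus $\alpha$ meets $\Gamma_\Sigma$ in exactly three points as required by \Cref{defn: bypass}, with the central crossing on $\gamma$ and $\alpha$ straddling $B_0$.

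Next I would make $\alpha$ Legendrian. The arc $\alpha$ is transverse to $\Gamma_\Sigma$ and begins and ends on $\Gamma_\Sigma$, and — by choosing $\alpha$ to run alongside $c$ and $\gamma$ — every component of $\Sigma \setminus \alpha$ still meets $\Gamma_\Sigma$, so $\alpha$ is nonisolating and \Cref{thm: legendrian realization} applies to realize it as Legendrian while keeping $\Sigma$ convex and $\Gamma_\Sigma$ fixed up to isotopy. To build the half-disk itself, I would push $\alpha$ slightly into the interior of $M$ transverse to $\Sigma$, keeping its endpoints fixed, to obtain a Legendrian arc $\beta$ cobounding an embedded half-disk $B$ with $\partial B = \alpha \cup \beta$ lying just to the interior side of $B_0$. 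Then \Cref{thm: convex flexibility} lets me perturb $B$ to be convex with Legendrian boundary, and the three crossings of $\alpha$ with $\Gamma_\Sigma$ together with the contribution of $\beta$ determine the twisting $t(\partial B)$; I would arrange the push-off (stabilizing $\beta$ if necessary) so that $t(\partial B) = -1$, giving a bypass in the sense of \Cref{defn: bypass} whose half-disk contains the half-disk $B_0$ cut off by $\gamma$.

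The hard part will be guaranteeing that $B$ is genuinely embedded, convex, and has twisting exactly $-1$, rather than merely an abstract attaching arc: this is where one must control the contact structure in a neighborhood of $B_0$, using that $\gamma$ is boundary-parallel so that $B_0$ carries a standard tight $I$-invariant collar, and that the adjacent dividing curves $\delta_1, \delta_2$ supply the correct contact framing along $\alpha$ at each crossing. The same analysis pinpoints exactly when the construction fails: the adjacent curves $\delta_1, \delta_2$ cease to exist precisely when $\gamma$ is the only component of $\Gamma_\Sigma$, and one checks via \Cref{thm: twisting number relative to framing} and \Cref{thm: giroux's criterion} that on a surface which is not a disk there is always enough dividing set to place $\alpha$, whereas on a disk having $\gamma$ as the sole component forces $t(\partial\Sigma, Fr_\Sigma) = -1$. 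Thus the single excluded case in the hypothesis is exactly the configuration in which no such bypass half-disk can be produced, and I would finish by confirming the remaining low-complexity configurations directly.
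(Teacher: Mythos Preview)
The paper does not prove this proposition; it is quoted from Honda and used as a black box, so there is no argument in the paper to compare against. That said, your attempt rests on a misreading of the statement.

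The bypass half-disk the proposition produces is not a disk transverse to $\Sigma$ with attaching arc inside $\Sigma$. It \emph{is} (a slight perturbation of) $B_0$ itself, regarded as a bypass attached to whatever convex surface $T$ contains $\partial\Sigma$; in the paper's applications $T=\partial M$ and $\Sigma$ is a meridian disk $D$. The attaching arc is the Legendrian arc $c=\partial B_0\cap\partial\Sigma\subset T$, the single dividing arc $\gamma$ on $B_0$ witnesses $t(\partial B_0)=-1$, and the excluded case is precisely when $B_0=\Sigma$, so that there is no room on $\partial\Sigma$ to serve as an attaching arc meeting $\Gamma_T$ in three points. Your half-disk $B$, bounded by an arc $\alpha\subset\Sigma$ and its push-off $\beta$, is transverse to $\Sigma$ along $\alpha$ and hence meets $\Sigma$ only in a one-dimensional set; it cannot contain the two-dimensional $B_0\subset\Sigma$, so it is not the object the proposition claims to exist.

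There is also a more basic gap in the ``push $\alpha$ off to $\beta$ and fill in a disk'' step. A bypass is not something one can manufacture from an arbitrary Legendrian arc by pushing it into $M$: its existence is a genuine constraint on the ambient contact structure, and the whole content of Honda's result is that a boundary-parallel $\gamma$ \emph{guarantees} this existence because $B_0$ is already present inside $(M,\xi)$ with the correct dividing set. Your construction instead tries to conjure a new bypass for $\Sigma$ out of nothing, and ``stabilizing $\beta$ if necessary'' to arrange $t(\partial B)=-1$ is not a well-defined operation on an embedded half-disk in a fixed contact manifold.
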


This is illustrated in \Cref{fig:boundary-parallel}. For simplicity, we at times call a bypass induced by a boundary-parallel curve a \emph{boundary-parallel} bypass. Similarly, we say that the associated half-disk is boundary-parallel. 

\begin{figure}[htbp] 
\centering
\includegraphics[scale=0.8]{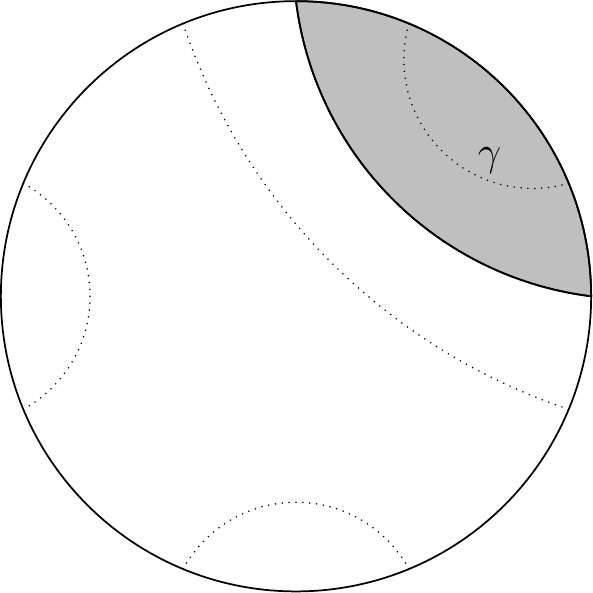}
\caption{The dividing set on this convex surface is in dotted lines and the shaded area is a boundary-parallel bypass half-disk. Note that this particular state admits three distinct (though possibly isotopic) half-disks induced by boundary-parallel curves.}
\label{fig:boundary-parallel}
\end{figure} 

Finally, we will also need the following two results due to Honda. 

\begin{prop}[Honda \cite{honda2000classification}] \label{prop: reverse bypass} 
If $(M,\xi)$ is a contact $3$-manifold admitting an exterior (respectively, interior) bypass, the attachment of which takes $\xi$ to a new contact structure $\xi'$, then we can turn the bypass upside down to obtain an interior (respectively, exterior) bypass that takes $(M,\xi')$ to $(M,\xi)$. 
\end{prop}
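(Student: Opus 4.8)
The plan is to realize the reversed bypass as the \emph{same} half-disk $B$, reattached to the surface produced by the original bypass but from the opposite side. Suppose first that $B$ is an exterior bypass attached to the convex surface $\Sigma$ along the Legendrian arc $\alpha = B\cap\Sigma$, and that this attachment carries $\xi$ to $\xi'$. By the Bypass Attachment Lemma there is a neighborhood $N = N(\Sigma\cup B)$ whose convex boundary is $\Sigma - \Sigma'$, where $\Sigma'$ is the convex surface carrying the modified dividing set $\Gamma_{\Sigma'}$, and the collar $N$ interpolates between the two. I will argue that $B$, viewed from the $\Sigma'$ side, is an interior bypass whose attachment recovers $\Sigma$ together with $\Gamma_\Sigma$, and hence $\xi$.

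First I would decompose the Legendrian boundary $\partial B$ into the attaching arc $\alpha\subset\Sigma$ and the complementary Legendrian arc $\beta = \partial B\setminus\alpha$, which lies in the interior of $N$ on the $\Sigma'$ side. Because $N$ is topologically a product $\Sigma\times[0,1]$ with $\Sigma = \Sigma\times\{0\}$ and $\Sigma' = \Sigma\times\{1\}$, the arc $\beta$ can be isotoped through Legendrian arcs onto $\Sigma'$, producing an attaching arc $\alpha'\subset\Sigma'$ while keeping $B$ a valid half-disk. Since $B$ lies between $\Sigma$ and $\Sigma'$, it sits on the side of $\Sigma'$ opposite to the outward orientation; thus $B$ is an \emph{interior} bypass for $\Sigma'$ along $\alpha'$. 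In the other case of the proposition one runs the same argument with ``interior'' and ``exterior'' exchanged.

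Next I would verify that $(\Sigma',\alpha',B)$ genuinely satisfies the conditions of \Cref{defn: bypass}: transversality of $B$ to $\Sigma'$ is inherited from transversality to $\Sigma$, and the condition $t(\partial B) = -1$ is automatic since $\partial B$ is the very same Legendrian curve. The essential point is that $\alpha'$ meets $\Gamma_{\Sigma'}$ at exactly three points; granting this, attaching the interior bypass $B$ to $\Sigma'$ reconstructs the neighborhood $N(\Sigma'\cup B) = N$, whose opposite convex boundary component is $\Sigma$ with its original dividing set $\Gamma_\Sigma$. Hence this interior bypass takes $(M,\xi')$ back to $(M,\xi)$. As a consistency check, one sees that the dividing-set operation of the interior bypass along $\alpha'$ is exactly the mirror of the exterior operation along $\alpha$ described before the Bypass Attachment Lemma, and the two are mutually inverse, so $\Gamma_\Sigma$ is indeed recovered.

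The main obstacle is precisely the three-point intersection count for $\alpha'$ with $\Gamma_{\Sigma'}$, together with the claim that the two dividing-set moves are inverse; establishing these cleanly requires the explicit local contact model of a bypass, i.e.\ the standard half-disk in a neighborhood $\Sigma\times[0,1]$. Within that model the configuration of $\Sigma$, $\Sigma'$, $B$, and the dividing sets is symmetric under the reflection interchanging $\Sigma\times\{0\}$ and $\Sigma\times\{1\}$, and it is this symmetry that both makes $B$ a bona fide interior bypass on $\Sigma'$ and forces the two moves to undo one another. Verifying the symmetry of the local model, rather than any global argument, is where the real content of the proof lies.
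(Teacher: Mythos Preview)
The paper does not supply its own proof of this proposition; it is stated with a citation to Honda and used as a black box throughout. So there is no in-paper argument to compare your attempt against.

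Your approach is the standard one and is essentially correct: realize the reversed bypass as the same half-disk $B$, now viewed as attached to $\Sigma'$ from the opposite side, and verify in the local model that this is a valid bypass whose attachment recovers $\Gamma_\Sigma$. One small caution on your final paragraph: the local model is not literally symmetric under the reflection swapping $\Sigma\times\{0\}$ and $\Sigma\times\{1\}$, since the dividing sets on the two ends are genuinely different. What is true is that the free arc $\beta$ of $B$, once pushed onto $\Sigma'$, meets $\Gamma_{\Sigma'}$ in exactly three points in the pattern required by \Cref{defn: bypass}, and that the resulting interior bypass move on $\Gamma_{\Sigma'}$ is precisely the inverse of the exterior move on $\Gamma_\Sigma$. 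This is checked by hand in the explicit model (as in Honda's original paper), and your identification of that verification as ``where the real content lies'' is accurate.
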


\begin{prop}[Honda \cite{honda2000classification}] \label{prop: bypasses commute}
Bypass attachment is commutative in the sense that if we have two disjoint attaching arcs $\alpha$ and $\beta$, then the contact structure that results from attaching a bypass at $\beta$ and then $\alpha$ is the same as the one resulting from attaching $\alpha$ and then $\beta$. 
\end{prop}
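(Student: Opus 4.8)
The plan is to exploit the fact that a bypass attachment is a \emph{local} operation: by the Bypass Attachment Lemma, attaching a bypass along an arc $\alpha$ alters the contact structure only inside a neighborhood $N_\alpha$ of $\Sigma\cup B_\alpha$, where $B_\alpha$ is the bypass half-disk, and leaves $\xi$ unchanged (under the collar identification) away from a neighborhood of $\alpha$. Since $\alpha$ and $\beta$ are disjoint compact arcs on $\Sigma$, I would first choose disjoint open neighborhoods $U_\alpha,U_\beta\subset\Sigma$ of $\alpha$ and $\beta$, and then arrange the two half-disks $B_\alpha$ and $B_\beta$ to be transverse to $\Sigma$ and supported over $U_\alpha$ and $U_\beta$ respectively, so that $B_\alpha\cap B_\beta=\emptyset$.

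The core step is to realize both attachments simultaneously. Take a neighborhood $N$ of $\Sigma\cup B_\alpha\cup B_\beta$ with convex boundary, and call the resulting convex surface $\Sigma''$ and contact structure $\xi''$. I claim $(\Sigma'',\xi'')$ is exactly what one obtains by attaching along $\alpha$ then $\beta$, and also by attaching along $\beta$ then $\alpha$. Indeed, because $B_\alpha$ and $B_\beta$ lie over the disjoint regions $U_\alpha$ and $U_\beta$, the neighborhood $N$ can be assembled as $N_\alpha\cup N_\beta$, where $N_\alpha$ is a neighborhood of $\Sigma\cup B_\alpha$ and $N_\beta$ is a neighborhood of $\Sigma\cup B_\beta$, and away from the two half-disks $N$ is simply an $I$-invariant collar of $\Sigma$ in which the two pieces overlap. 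Attaching the bypass along $\alpha$ first produces $\Sigma'$ (the relevant boundary component of $N_\alpha$) together with $\xi'$; since $\beta$ and $B_\beta$ lie in the unaltered region, they persist verbatim as a valid attaching arc and bypass half-disk on $(\Sigma',\xi')$, and attaching along $\beta$ completes $N$, yielding $(\Sigma'',\xi'')$. By the symmetry of this description in $\alpha$ and $\beta$, reversing the order produces the same $N$, hence the same $\Sigma''$ and the same $\xi''$ up to isotopy. The uniqueness of the contact germ on a neighborhood of a convex surface together with its attached half-disks then gives the asserted equality of contact structures.

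The step I expect to be the main obstacle is verifying that the second attaching arc, together with its bypass half-disk, genuinely survives the first attachment — that is, that the first bypass move does not disturb a neighborhood of $\beta$. This is where disjointness is essential: one must check that the neighborhood $N_\alpha$ supplied by the Bypass Attachment Lemma can be taken small enough to avoid $U_\beta$, so that on $\Sigma'$ the dividing set and the contact structure near $\beta$ agree with those on $\Sigma$. Once this localization is established, $B_\beta$ lies in the common product region and is carried along isotopically, and the commutativity follows immediately from the order-independence of the combined neighborhood $N$.
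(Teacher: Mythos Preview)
The paper does not supply its own proof of this proposition: it is stated with attribution to Honda \cite{honda2000classification} and used as a black box. So there is no in-paper argument to compare against.

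That said, your sketch is the standard one and is correct. The locality of a single bypass attachment is exactly what the Bypass Attachment Lemma provides: the modification happens in an arbitrarily thin neighborhood of $\Sigma\cup B$, and outside a neighborhood of the attaching arc the new surface $\Sigma'$ is $I$-invariantly isotopic to $\Sigma$. Given disjoint arcs $\alpha,\beta$, one can therefore take the half-disks $B_\alpha,B_\beta$ and their attachment neighborhoods to be disjoint from each other (after the obvious shrinking), and the combined neighborhood $N$ of $\Sigma\cup B_\alpha\cup B_\beta$ realizes both bypasses at once. Either order of attachment then amounts to a choice of intermediate level surface inside the same $N$, and both routes end at the same convex $\Sigma''$ with the same contact structure, since the contact structure on $N$ was fixed from the start. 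The point you flag as the potential obstacle---that the first attachment does not disturb a neighborhood of the second arc---is precisely the content of the $I$-invariance away from the attaching arc, and is handled by choosing the neighborhoods small enough, as you say.
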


\subsection{Known results for a solid torus}\label{sec: known results}
In this section, we summarize a few known classification results for the solid torus. These will prove useful to us in our proof of the main theorem. 

Recall that $N(n,-p,q)$ denotes the number of tight contact structures with dividing set $\Gamma_{\partial M}=(n,-p,q)$. 

\begin{theorem}[Honda \cite{honda2000classification2}] \label{thm: n10 case}
For every $n\in\NN$, there are precisely $C_n$ tight contact structures whose dividing set can be parametrized as $(n,-1,1)$, where $C_n$ is the $n$-th Catalan number. 
\end{theorem}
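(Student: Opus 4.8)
The plan is to reduce the classification to counting dividing sets on a convex meridian disk, and then to invoke Eliashberg's uniqueness theorem on the ball \cite{eliashberg1992contact}. First I would take a meridian disk $D=\{1\}\times D^2$ and arrange that $\partial D$ is Legendrian: as a meridian, $\partial D$ meets each of the $2n$ dividing curves of $\Gamma=(n,-1,1)$ exactly once, since the longitudinal winding is $p=1$, so $\partial D$ is nonisolating and \Cref{thm: legendrian realization} applies. By \Cref{thm: twisting number relative to framing} the twisting number is then $t(\partial D)=-\tfrac12\#(\partial D\cap\Gamma_{\partial M})=-n\le 0$, so \Cref{thm: convex flexibility} makes $D$ convex with Legendrian boundary and $\Gamma_D$ meeting $\partial D$ in $2n$ points. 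Because $D$ is a disk, \Cref{thm: giroux's criterion} forbids homotopically trivial closed dividing curves in any tight structure, so $\Gamma_D$ is a system of $n$ disjoint boundary-parallel arcs joining the $2n$ marked points of $\partial D\cap\Gamma_{\partial M}$. These are exactly the non-crossing perfect matchings of $2n$ points on a circle, of which there are $C_n$; this already gives the upper bound $N(n,-1,1)\le C_n$.

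Next I would cut $M$ along $D$ to obtain a ball $B^3$, whose boundary sphere is assembled from two copies $D_0,D_1$ of $D$ and the annulus $A$ obtained by cutting $\partial M$ along $\partial D$. The dividing set $\Gamma_{S^2}$ is built from the two chord systems $\Gamma_{D_0},\Gamma_{D_1}$ together with the arcs of $\Gamma_{\partial M}$ cut along $\partial D$, joined by Honda's edge-rounding rule at the two Legendrian corners $\partial D_0,\partial D_1$. By \Cref{thm: giroux's criterion} the ball carries a tight structure precisely when $\#\Gamma_{S^2}=1$, and when it does this structure is unique by Eliashberg \cite{eliashberg1992contact}. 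Thus each matching whose associated sphere dividing set closes up to a single circle corresponds to exactly one tight structure on $(M,\Gamma)$, and conversely every tight structure yields such a matching by the first paragraph. The main technical point is the edge-rounding computation: one must check that, for the slope-$(1,1)$ dividing set, \emph{every} non-crossing matching closes up to a single dividing curve on $S^2$. This is exactly where $q=1$ enters; note that without the half-shift produced by edge-rounding the naive gluing would instead produce $n$ separate circles, so the shift is essential, and it is what forces the count to be the full $C_n$ rather than something smaller. I expect this verification to be the main obstacle.

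The remaining issue is to upgrade this correspondence to a genuine bijection. Surjectivity and realizability follow from the single-circle check above together with Eliashberg's existence statement. For injectivity I must show that the combinatorial type of $\Gamma_D$ is an honest isotopy invariant of the tight structure, since a priori two different convex meridian disks might record different matchings; one controls this by observing that any two meridian disks are isotopic and, using \Cref{prop: bypass abundance}, \Cref{prop: reverse bypass}, and \Cref{prop: bypasses commute}, by relating competing disks through bypass moves so that distinct matchings cannot arise from a single structure. As an independent consistency check, and in the bypass spirit of this paper, one can instead peel off an innermost boundary-parallel bypass on $D$ via \Cref{prop: bypass abundance}: the innermost arc of $\Gamma_D$ splits $D$ into two sub-disks and induces the Catalan convolution $C_n=\sum_{k=0}^{n-1}C_kC_{n-1-k}$ with base case $C_0=1$, recovering the same count.
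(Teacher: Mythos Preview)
The paper does not supply its own proof of this theorem; it is quoted as a known result of Honda \cite{honda2000classification2} and is only \emph{used} (as a base case for the recurrence in \Cref{lem: C_n recurrence}, and in the proof of \Cref{prop: tight contact structures admitting n bypasses} where the authors explicitly invoke Honda's statement that for $\Gamma=(n,-1,1)$ the tight structures are in bijection with the states). So there is no in-paper argument to compare your sketch against; your outline is essentially the classical one.

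That said, your sketch has a genuine gap at exactly the point you flag as delicate: injectivity. You correctly observe that the upper bound $N(n,-1,1)\le C_n$ comes from ``state determines structure'' via cutting and Eliashberg, and that the single-circle check on $S^2$ is what makes every non-crossing matching realizable. But the sentence ``relating competing disks through bypass moves so that distinct matchings cannot arise from a single structure'' is not an argument. Bypass moves are precisely the mechanism by which the dividing set on a convex meridian disk \emph{changes}; two parallel convex meridian disks bounding a product region with a nontrivial bypass layer will in general carry different matchings. Indeed, the present paper exploits exactly this phenomenon for $(n,-p,q)$ with $p>1$: \Cref{lem: states exists simultaneously} shows a single tight structure can admit many states, and the whole inclusion--exclusion machinery of \Cref{sec: solid torus} is built on that multiplicity. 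What is special about $p=q=1$---and what you still owe---is that in this case the state is unique; Honda proves this, and the paper uses it verbatim in \Cref{prop: tight contact structures admitting n bypasses}. Your appeal to \Cref{prop: bypass abundance}, \Cref{prop: reverse bypass}, \Cref{prop: bypasses commute} does not by itself force uniqueness, and the Catalan-convolution ``consistency check'' at the end presupposes the very bijection you are trying to establish. To close the gap you need either an invariant that separates the $C_n$ candidate structures (e.g.\ a refined relative Euler class / sign data on the bypass layers), or an honest inductive argument that any bypass on a meridian disk in the $p=1$ case is trivial in the sense of \Cref{prop: trivial bypasses are trivial}, so that two convex meridian disks for the same $\xi$ must carry isotopic $\Gamma_D$.
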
 

For any relatively prime integers $p$ and $q$ with $0<q\le p$, recall that we can write $-\frac pq$ as \[-\frac pq=[r_0,r_1,\dots,r_k]=r_0-\frac1{r_1-\frac1{r_2-\dots\frac1{r_k}}},\] where $r_i\le-2$ is an integer for each $i$. The only exception is for $p=q=1$, in which case we write $-\frac pq$ as simply $[r_0]=[-1]$. 

\begin{theorem}[Honda \cite{honda2000classification}] \label{thm: 1pq case}
Consider a dividing set on $\partial M$ parametrized by $(1,-p,q)$. Then \[N(1,-p,q)=|(r_0+1)(r_1+1)\dots(r_{k-1}+1)r_k|,\] where the $r_i$ are the coefficients of the continued fraction expansion of $-\frac pq=[r_0,r_1,\dots,r_k]$. 
\end{theorem}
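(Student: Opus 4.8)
The plan is to run a convex-surface induction on the complexity of the slope $-p/q$, reducing it one Farey step at a time by digging out bypasses and matching the number of available bypass choices at each stage against the continued-fraction coefficients $r_i$. First I would fix a convex meridian disk $D$ with Legendrian boundary, using \Cref{thm: legendrian realization} to make $\partial D$ Legendrian and \Cref{thm: convex flexibility} to perturb $D$ to be convex. Since $\partial D$ is a meridian, its geometric intersection number with a single dividing curve of slope $-p/q$ is $p$, so $\#(\partial D\cap\Gamma_{\partial M})=2p$ (there are two dividing curves when $n=1$), and \Cref{thm: twisting number relative to framing} gives $t(\partial D)=-p$. By \Cref{thm: giroux's criterion}, tightness forces $\Gamma_D$ to have no homotopically trivial closed components, so $\Gamma_D$ consists of exactly $p$ properly embedded arcs.

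For the inductive step I would locate a boundary-parallel dividing arc on $D$ and invoke \Cref{prop: bypass abundance} to produce a bypass half-disk; digging this bypass out of the solid torus yields a slightly smaller solid torus $M'$ whose new boundary slope is the Farey neighbor of $-p/q$ obtained by truncating the continued fraction. The heart of the argument is then bookkeeping: the distinct isotopy classes of bypasses that can be dug out at a given stage are parametrized by the coefficient $r_i$ controlling that stage, so that advancing through the block associated to $r_i$ contributes a factor of $|r_i+1|$, while the terminal block (where $M'$ becomes the standard tight neighborhood of the Legendrian core, with a unique tight structure) contributes the anomalous factor $|r_k|$ rather than $|r_k+1|$. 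Multiplying these factors, and checking the base case $(p,q)=(1,1)$ where $-p/q=[-1]$ and $N(1,-1,1)=C_1=1$ agrees with $|r_0|=1$ (see \Cref{thm: n10 case}), yields the claimed product as an upper bound. I would organize this as an explicit comparison with the classification of $T^2\times[0,1]$: peeling off the meridian-disk collar exhibits the solid torus as a stack of basic slices capped by the core neighborhood, and Honda's $T^2\times[0,1]$ count $|(r_0+1)\cdots(r_k+1)|$ is modified only in its last factor because capping by the core identifies the two signs available to the innermost basic slice.

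The lower bound is where the real work lies: I would construct $|(r_0+1)\cdots(r_{k-1}+1)r_k|$ honestly distinct tight structures by realizing each admissible sign sequence on the stack of basic slices via Legendrian surgery on a chain of Legendrian curves, and then certify both that each is tight and that no two are isotopic. Tightness can be secured by exhibiting each structure as the restriction of a Stein-fillable structure, and nonisotopy can be detected by a discrete invariant such as the relative Euler class of $\xi$ evaluated on the meridian disk, which separates exactly those sign sequences that survive the identifications.

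The main obstacle I anticipate is precisely the combinatorics of these identifications: the ``shuffling'' phenomenon whereby distinct sign assignments to adjacent basic slices within a single continued-fraction block give isotopic contact structures — witnessed by the trivial-bypass mechanism of \Cref{prop: trivial bypasses are trivial} together with the commutation in \Cref{prop: bypasses commute} — collapsing the naive count $2^{m}$ over the $m$ basic slices down to the product $\prod|r_i+1|$, and simultaneously explaining why the solid-torus cap replaces the final factor $|r_k+1|$ by $|r_k|$. Proving that these shuffling identifications are \emph{exactly} the ones induced by trivial bypasses, and that there are no further coincidences, is the crux of forcing the upper and lower bounds to meet.
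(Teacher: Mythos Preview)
The paper does not prove this theorem; it is stated in \Cref{sec: known results} as a known result of Honda \cite{honda2000classification} and is used only as input (the base case $n=1$) for the bypass induction that yields \Cref{thm: main theorem}. There is therefore no ``paper's own proof'' to compare against.

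That said, your sketch is a faithful outline of Honda's original argument. The upper bound comes from decomposing the solid torus into a stack of basic $T^2\times I$ slices over a standard tight neighborhood of the Legendrian core, via repeated bypass excavation along a convex meridian disk; the shuffling identifications within each continued-fraction block collapse the naive $2^m$ to $\prod_{i<k}|r_i+1|\cdot|r_k|$, with the final factor $|r_k|$ rather than $|r_k+1|$ arising exactly because the solid core identifies one pair of sign choices in the innermost block. The lower bound is achieved by realizing each admissible sign sequence and distinguishing the resulting structures by the relative Euler class on the meridian disk. One minor correction: Honda's tightness verification proceeds by gluing two such solid tori to obtain a lens space and invoking the known tight classification there (or, in earlier versions, by perturbing taut foliations), rather than by exhibiting Stein fillings directly---though your route also works and was later made standard.
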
 

Besides these two partial classification results for the solid torus, we will also make use of the following propositions, which tell us how attaching a single bypass will affect the dividing set on $\partial M=T^2$. 

\begin{prop}[Honda \cite{honda2002gluing}] \label{prop: bypass attachment possibilities}
If $\Sigma=T^2$, then attaching a valid bypass to $\Sigma$ can only affect the dividing set in one of the following ways: 
\begin{enumerate}[(1)]
\item The bypass attachment is trivial, and so the dividing set remains the same,
\item The number of components in $\Gamma_\Sigma$ decreases by 2 (provided $\#\Gamma_\Sigma=2n>2)$, 
\item The number of components in $\Gamma_\Sigma$ increases by 2, or 
\item The new dividing curve is achieved from $\Gamma_\Sigma$ via a positive Dehn twist. 
\end{enumerate} 
\end{prop}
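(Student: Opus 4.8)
The plan is to reduce the statement to a finite combinatorial analysis of how the attaching arc can sit relative to the dividing set, and then to apply the bypass attachment rule of \Cref{fig:bypass-ds} in each case. First I would record the global constraint on $\Gamma_\Sigma$: since $\Sigma=T^2$ carries a tight neighborhood, Giroux's criterion (\Cref{thm: giroux's criterion}) forbids homotopically trivial components, so $\Gamma_\Sigma$ consists of $2n$ pairwise parallel essential simple closed curves of some common slope, and this number is even because $[\Gamma_\Sigma]=0\in H_1(T^2)$. Because bypass attachment is natural under diffeomorphisms of $T^2$, and the mapping class group acts on slopes through $SL_2(\ZZ)$, I may apply a change of coordinates so that the $2n$ dividing curves are ``horizontal.'' This reduces everything to understanding the attaching arc against a standard family of $2n$ parallel curves.

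Next I would classify the attaching arcs. The $2n$ parallel curves cut $T^2$ into $2n$ annular regions cyclically ordered $R_1,\dots,R_{2n}$, and an attaching arc $\alpha$, being transverse to $\Gamma_\Sigma$ and meeting it in exactly three points, traces a walk $R^{(0)}\to R^{(1)}\to R^{(2)}\to R^{(3)}$ of length three in which consecutive regions are adjacent. Up to the residual symmetries (cyclic relabeling of the curves and reflection), there are only a handful of such walks, distinguished by whether $\alpha$ crosses three distinct curves monotonically, crosses two distinct curves, or returns across a single curve. For each combinatorial type I would then apply the reconnection described after the Bypass Attachment Lemma and illustrated in \Cref{fig:bypass-ds}, cutting the three strands met by $\alpha$ and rejoining their six endpoints inside a disk neighborhood of $\alpha$.

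The heart of the argument is to read off the global effect of this local surgery. Since the move is supported in a disk $D$ meeting only three strands of $\Gamma_\Sigma$, and outside $D$ the dividing set is untouched, the reconnection can merge or split only among these three strands; hence the number of closed components can change by at most two, and by the parity constraint it changes by exactly $0$ or $\pm 2$. When the slope is preserved, tracking whether the rejoined strands close up into fewer or more circles yields precisely cases (1), (2), and (3): the trivial reconnection leaves $\Gamma_\Sigma$ isotopically unchanged, while the two nontrivial same-slope reconnections respectively cancel a pair of adjacent curves (possible only when $2n>2$) or create a new pair. The remaining possibility is that the rejoined strands form an essential curve of a new slope; here I would argue that the surviving curves must again be parallel and that the new slope differs from the old by a single Farey step, which is realized by a positive Dehn twist, giving case (4).

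The main obstacle I anticipate is completeness and correctness of the global bookkeeping rather than any single hard estimate. Concretely, I must make sure the finite list of walk-types is exhaustive and closed under the symmetries I quotient by, and, more delicately, I must correctly determine in each nontrivial case whether the reconnection merges two circles into one (decreasing the count), splits one into two (increasing it), or reroutes strands into a curve of the adjacent slope. Special care is needed for the small cases, in particular $2n=2$, where cancellation is unavailable and the slope-changing reconnection is forced; checking that this reconnection is exactly a positive, rather than negative, Dehn twist requires orienting the local picture of \Cref{fig:bypass-ds} consistently with the coorientation of $\xi$.
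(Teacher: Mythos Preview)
The paper does not give a proof of this proposition at all: it is quoted as a result of Honda \cite{honda2002gluing} and used as a black box. So there is nothing to compare your argument against in this paper. What the paper does contain is the \emph{Remark} immediately following the proposition, which records the output of two specific sub-cases of the case analysis (three distinct curves $\Rightarrow$ case~(2); $\gamma_1=\gamma_3\neq\gamma_2$ $\Rightarrow$ case~(4)); this is exactly a fragment of the bookkeeping you are proposing to carry out.

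Your outline is the standard argument and is sound. Two points worth sharpening. First, your trichotomy ``three distinct / two distinct / one'' is too coarse: when exactly two of the three intersected curves coincide, the outcome depends on \emph{which} two coincide ($\gamma_1=\gamma_2$, $\gamma_2=\gamma_3$, or $\gamma_1=\gamma_3$), and these produce different cases among (1), (3), (4). The paper's Remark is precisely the kind of finer statement you will need. Second, case~(4) only occurs when $\#\Gamma_\Sigma=2$; for $2n\ge 4$ a monotone arc meets three genuinely distinct curves and you land in case~(2), so the slope-changing reconnection is not merely ``forced'' at $2n=2$ but in fact confined to it. With those refinements the case list is complete and the local-to-global reading of the surgery is straightforward.
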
 

\begin{remark} 
In fact, from the proof of this proposition, we can achieve a slightly stronger statement which tells us which of the four cases we get based on whether the bypass's attaching arc intersects three distinct components or not and, if it does not, which components are the same. In particular, suppose the attaching arc of the bypass intersects $\Gamma_\Sigma$ at the points $P_i\in\gamma_i$, in that order, where $\gamma_i\subseteq\Gamma_\Sigma$ are dividing curves for $i=1,2,3$. Then the only relevant cases for us are (1) if $\gamma_i$ are all distinct, in which case the only possibility is Case (2) above, and (2) if $\gamma_1=\gamma_3\ne\gamma_2$, which will automatically result in Case (4). 
\end{remark}

%\begin{prop}[Honda \cite{honda2000classification}] \label{prop: one bypass}
%If $\Gamma=(n,-p,q)$ with $n>1$ is the dividing set of the contact solid torus %$(M,\xi)$, then the boundary $\partial M$ admits an interior bypass. Moreover, %attaching this bypass takes $\xi$ to a new tight contact structure $\xi'$ whose %dividing set $\Gamma'$ is given by $\Gamma'=(n-1,-p,q)$. 
%\end{prop}

\subsection{Input from Floer theory}\label{subsec: floer theory}
In this section, we summarize known results in Floer theory that will help us understand bypasses on a solid torus. It is worth mentioning that the different branches of Floer theory work equally well; we simply choose to present the discussion below using instanton Floer theory. Also, Floer theory and bypass triangles, which will be introduced in the current subsection, work more generally for arbitrary $3$-manifolds $M$ with a boundary and a special set of curves known as the sutures on the boundary. However, for consistency, we focus on the case of a solid torus, and the dividing set parametrized by $(1,-p,q)$. We will continue using the term "dividing set" instead of "suture".

Suppose $M$ is a solid torus and $\Gamma=(n,-p,q)$ is a dividing set on $\partial M$. Instanton Floer theory associates to each pair $(M,\Gamma)$ a finite-dimensional vector space over $\mathbb{Z}_2$. 
\begin{theorem}[Kronheimer and Mrowka \cite{kronheimer2010knots}]
	For any given pair $(M,\Gamma)$, there is a well defined finite-dimensional vector space over $\mathbb{C}$ which we denote by $SHI(M,\Gamma)$, associated to $(M,\Gamma)$.
\end{theorem}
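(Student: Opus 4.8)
The final statement is a result of Kronheimer and Mrowka, so the plan is to recall their construction of sutured instanton homology and to verify that, specialized to the pair $(M,\Gamma)$, it produces a finite-dimensional complex vector space that is independent of the auxiliary choices. The first step is to observe that $(M,\Gamma)$ is a \emph{balanced sutured manifold} in the sense of Gabai: the solid torus has connected nonempty boundary, and the dividing set $\Gamma=(n,-p,q)$ consists of parallel essential curves cutting $\partial M$ into two subsurfaces $R_+$ and $R_-$ with $\chi(R_+)=\chi(R_-)$. Hence the construction of \cite{kronheimer2010knots} applies.

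Next I would form a \emph{closure} of $(M,\Gamma)$. Writing $A(\Gamma)$ for an annular neighborhood of the dividing set, so that $\partial M\setminus\operatorname{int}A(\Gamma)=R_+\sqcup R_-$, one attaches an auxiliary piece along $A(\Gamma)$ so that the two remaining boundary components become two copies of a single closed oriented surface $\bar R$ of genus at least $2$, and then glues these two copies together by an orientation-reversing diffeomorphism. The result is a closed oriented $3$-manifold $Y$ containing $\bar R$ as a distinguished embedded surface.

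I would then equip $Y$ with an $SO(3)$-bundle $w\to Y$ whose second Stiefel--Whitney class is nontrivial on $\bar R$ (equivalently, an odd-degree $U(2)$-bundle on $\bar R$). Because $w$ pairs oddly with $\bar R$, the bundle is \emph{admissible}: it carries no flat reducible connections, so the instanton Floer homology $I_*(Y)_w$ of Kronheimer and Mrowka is well defined. Since $Y$ is closed and $w$ is admissible, $I_*(Y)_w$ is a finite-dimensional vector space over $\mathbb{C}$, and one sets $SHI(M,\Gamma):=I_*(Y)_w$. Finite-dimensionality is then immediate from the closed-manifold theory.

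The main obstacle, and the genuine content of the theorem, is \emph{well-definedness}: the isomorphism class of $I_*(Y)_w$ must not depend on the choice of auxiliary surface, on the genus of $\bar R$, or on the gluing diffeomorphism. Here the key input is Floer's excision theorem, which, given two closures $Y_1$ and $Y_2$ of $(M,\Gamma)$, yields an isomorphism $I_*(Y_1)_{w_1}\xrightarrow{\sim}I_*(Y_2)_{w_2}$ by cutting along the distinguished surfaces and regluing. Verifying that these excision isomorphisms are mutually compatible---so that the family of all closures forms a transitive system of vector spaces, canonical up to the overall $\mathbb{C}^\times$-scaling intrinsic to instanton theory---is the delicate step carried out in \cite{kronheimer2010knots}, and it is exactly what makes the isomorphism class of $SHI(M,\Gamma)$, and in particular its dimension, a bona fide invariant of $(M,\Gamma)$.
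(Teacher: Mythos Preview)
The paper does not give a proof of this statement at all: it is quoted as a black-box result of Kronheimer and Mrowka, with only a citation to \cite{kronheimer2010knots} (and a remark that Baldwin--Sivek later upgraded well-definedness from ``up to isomorphism'' to ``up to $\mathbb{C}^\times$-scaling''). So there is nothing in the paper to compare your argument against; your sketch is supplying content the paper deliberately omits.

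That said, your outline of the Kronheimer--Mrowka construction is accurate in spirit but has one genuine gap. You write that one sets $SHI(M,\Gamma):=I_*(Y)_w$, the full instanton Floer homology of the closure. This is not correct: $SHI(M,\Gamma)$ is defined as the simultaneous generalized eigenspace of $I_*(Y)_w$ for the operators $\mu(\bar R)$ and $\mu(\mathrm{pt})$, with eigenvalues $2g(\bar R)-2$ and $2$ respectively. Restricting to this ``top'' eigenspace is not a cosmetic choice---it is exactly what makes the excision argument go through. The full group $I_*(Y)_w$ genuinely depends on the closure (for instance on the genus of $\bar R$), and only after passing to this eigenspace does one obtain something invariant under change of auxiliary surface and gluing. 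Once you insert this step, the rest of your sketch (balanced sutured structure, closure, admissible bundle, excision for independence of choices) matches the construction in \cite{kronheimer2010knots}.
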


\begin{remark}
It bears mentioning that Kronheimer and Mrowka only proved that $SHI(M,\Gamma)$ is well-defined up to isomorphism. Then in \cite{baldwin2015naturality}, Baldwin and Sivek  proved that $SHI(M,\Gamma)$ is well defined up to multiplication by an element in $\mathbb{C}^*$. In this paper, we overlook this remaining ambiguity, since we only care about whether an element in $SHI(M,\Gamma)$ is zero or not, and whether given elements are linearly dependent or not.
\end{remark}

\begin{theorem}[Baldwin and Sivek \cite{baldwin2016instanton}]\label{thm: contact elements}
	Suppose $\xi$ is a contact structure on $(M,\Gamma)$. Then there is an element $\phi(\xi)\in SHI(-M,-\Gamma)$ associated to the contact structure $\xi$. Furthermore, if $\xi$ is overtwisted, then $\phi(\xi)=0$. 
\end{theorem}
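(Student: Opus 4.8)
The plan is to adapt the Honda--Kazez--Mati\'c construction of the sutured contact invariant from Heegaard Floer homology to the instanton setting. Recall that $SHI(-M,-\Gamma)$ is defined by choosing a \emph{closure}: one glues an auxiliary piece to $(M,\Gamma)$ along the convex boundary and caps off to obtain a closed $3$-manifold $\bar M$, and $SHI(-M,-\Gamma)$ is a distinguished summand of the instanton Floer homology of that closure. The first step is to observe that since $\partial M$ is convex with dividing set $\Gamma$, the contact structure $\xi$ is $I$-invariant near $\partial M$, so it extends across the gluing collar by an $I$-invariant contact structure and then over the capping region, producing a closed contact manifold $(\bar M,\bar\xi)$. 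Thus each contact structure on $(M,\Gamma)$ canonically determines, up to the choices inherent in the closure, a closed contact manifold.

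Next I would define the invariant by decomposing the contact manifold into elementary pieces. By the relative Giroux correspondence, $(M,\Gamma,\xi)$ can be built from a standard tight contact $3$-ball, whose sutured instanton homology is $\mathbb{C}$ and carries a canonical generator, by attaching contact $1$- and $2$-handles. Each contact handle attachment should induce a natural map on $SHI$, defined through the closure and the $2$-handle cobordism maps of instanton theory, and I would set $\phi(\xi)$ to be the image of the canonical generator under the composite of these maps. In the special case where $M$ is closed up to $(\bar M,\bar\xi)$, this recovers a closed contact invariant $\theta(\bar\xi)$, which can alternatively be built by presenting $(\bar M,\bar\xi)$ via Legendrian surgeries (contact $(-1)$-surgeries) on a link in the standard tight $S^3$ and taking the image of the generator of the rank-one instanton homology of $S^3$ under the induced cobordism map. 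The orientation reversal in the target $SHI(-M,-\Gamma)$ is exactly the one making these cobordism maps behave covariantly, as in the Heegaard Floer convention $c(\xi)\in\widehat{HF}(-Y)$.

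For the vanishing statement, suppose $\xi$ is overtwisted. The overtwisted disk lies in the interior of $M$ and is unaffected by the closure, so the closed extension $\bar\xi$ is overtwisted as well. I would then prove the instanton analogue of the Ozsv\'ath--Szab\'o theorem that the contact class of an overtwisted structure vanishes. Concretely, perturbing the overtwisted disk to be convex yields a disk whose dividing set has a homotopically trivial closed component, in violation of Giroux's criterion (\Cref{thm: giroux's criterion}); this local configuration can be encoded by a bypass and placed into an exact bypass triangle in a position where the contact class is forced to vanish. Transporting this conclusion back through the identification of $SHI(-M,-\Gamma)$ with a summand of the closure's homology gives $\phi(\xi)=0$.

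The main obstacle is not the formal outline above but the two analytic inputs underlying it: first, constructing the contact handle attachment (equivalently, cobordism) maps on sutured instanton homology, and second, proving that the resulting class is independent of all choices --- the closure, the handle decomposition, and the surgery presentation --- up to the overall $\mathbb{C}^*$ scaling already acknowledged in the remark preceding this theorem. Establishing the naturality of these maps, and in particular the exactness of the bypass triangle together with the precise location of the contact classes within it, is where essentially all of the difficulty lies; once those structural results are in place, both the existence of $\phi(\xi)$ and its vanishing for overtwisted $\xi$ follow relatively directly.
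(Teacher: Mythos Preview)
The paper does not prove this statement at all: \Cref{thm: contact elements} is quoted from Baldwin and Sivek \cite{baldwin2016instanton} as a known result, with no proof supplied. It is part of the ``Input from Floer theory'' subsection, where several external theorems are collected without proof to be used as black boxes later.

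Your proposal is a reasonable high-level sketch of how Baldwin and Sivek actually build the invariant (partial open book / contact handle decomposition, closures, cobordism maps, naturality up to $\mathbb{C}^*$), and your own final paragraph correctly identifies that the substance lies in the analytic and naturality inputs rather than in the formal outline. But there is nothing in the present paper to compare your argument against; the authors simply import the theorem.
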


\begin{definition}
We call the element $\phi(\xi)$ in \Cref{thm: contact elements} a \emph{contact element} associated to the contact structure $\xi$.
\end{definition}

In \cite{li2019direct}, the first author studied the instanton Floer homology for the pair $(M,\Gamma)$, where $M$ is a solid torus and $\Gamma=(1,-p,q)$. Recall that we have assumed that $0<q\leq p$ and $(p,q)=1$. Then we have the following result.

\begin{theorem}[Li \cite{li2019direct}]\label{thm: SHI for sutured solid torus}
	Suppose $M$ is a solid torus and $\Gamma=(1,-p,q)$. Then there exists a $\mathbb{Z}$-grading on $SHI(-M,-\Gamma)$, which we write $SHI(-M,-\Gamma,i)$, so that
	\begin{equation*}
	SHI(-M,-\Gamma,i)=\left\{
	\begin{array}{cc}
		\mathbb{C}&1\leq i\leq p\\
		0&{\rm else}
	\end{array}
	\right.
	\end{equation*}
Furthermore, any tight contact structure on $(M,\Gamma)$ has a nonzero contact element, and any two tight contact structures have contact elements supported in different gradings of $SHI(-M,-\Gamma)$.
\end{theorem}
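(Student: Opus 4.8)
The plan is to prove the graded-dimension computation first and then deduce the contact-element statement. For the grading, I would choose an admissible grading surface in $(-M,-\Gamma)$, the natural candidate being (a stabilization of) the meridian disk $\{pt\}\times D^2$: its boundary is the meridian $\mu$, which meets each of the two sutures of $\Gamma=(1,-p,q)$ in $p$ points, for $2p$ intersections total. By the theory of Kronheimer--Mrowka and Baldwin--Sivek, such a surface $S$ induces an eigenspace decomposition of $SHI(-M,-\Gamma)$ under the operator $\mu(S)$, and this furnishes the $\mathbb{Z}$-grading $SHI(-M,-\Gamma,i)$. An adjunction-type bound coming from the genus and boundary intersections of the grading surface should confine the nonzero gradings to the range $1\le i\le p$.

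For the dimension count I would induct on the complexity of $-\tfrac pq$ measured by its continued fraction $[r_0,\dots,r_k]$ (equivalently, on $p$). The base case is $(1,-1,1)$, which, after a Dehn twist along a meridian disk, is the product sutured manifold $(\text{annulus})\times I$; hence $SHI=\mathbb{C}$, concentrated in a single grading, consistent with $p=1$. For the inductive step the key tool is the bypass exact triangle of Baldwin--Sivek: a Farey triangle of slopes produces three solid tori whose instanton homologies fit into an exact sequence
\[\cdots \to SHI(-M,-\Gamma_1) \to SHI(-M,-\Gamma_2) \to SHI(-M,-\Gamma_3) \to \cdots.\]
Choosing the triangle to relate $(1,-p,q)$ to two strictly simpler slopes and tracking the induced grading shifts yields a recursion for the graded dimensions. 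A graded-Euler-characteristic (parity) computation then rules out cancellation in the triangle, pinning the dimension to exactly $1$ in each grading $1\le i\le p$ and $0$ otherwise.

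For the contact-element assertions I would compute $\phi(\xi)$ for each tight $\xi$ inductively using its basic-slice (continued-fraction) decomposition in the sense of Honda, realizing each elementary piece as a bypass attachment and using that such attachments induce the maps of the bypass exact triangle on $SHI$ and carry contact elements to contact elements (\Cref{thm: contact elements}). Following the element through the decomposition shows both that $\phi(\xi)$ is nonzero and that its grading equals the relative Euler class of $\xi$. Since Honda's classification (\Cref{thm: 1pq case}) distinguishes the $N(1,-p,q)$ tight structures precisely by their relative Euler classes, distinct tight structures must land in distinct gradings; because each graded summand is one-dimensional, this simultaneously recovers the bound $N(1,-p,q)\le p$ and identifies which gradings are occupied.

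The main obstacle will be the inductive dimension computation: the bypass exact triangle only constrains the graded dimensions up to potential cancellation in the connecting maps, so the crux is to match the grading shifts in the triangle with the Farey/continued-fraction combinatorics and to confirm, via an Euler-characteristic count, that no unexpected cancellation occurs. Once the graded structure is in hand and the grading is identified with the relative Euler class, the nonvanishing and distinct-grading statements for contact elements follow comparatively formally from the bypass framework.
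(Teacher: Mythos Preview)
The paper does not prove this theorem. It is quoted from \cite{li2019direct} (work of the first author) and used as a black box in \Cref{subsec: floer theory} and in the proof of \Cref{prop: tight contact structures admitting n bypasses}. There is therefore no proof in the present paper to compare your proposal against.

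For what it is worth, your outline is broadly in line with how the result is actually established in the cited source: the $\mathbb{Z}$-grading does come from a properly embedded surface built from the meridian disk, the dimension computation proceeds by induction along bypass exact triangles indexed by the Farey/continued-fraction combinatorics, and the contact-element statement is obtained by pushing Honda's basic-slice decomposition through the bypass maps and identifying the resulting grading with the relative Euler class. Two caveats, however. First, the adjunction bound you invoke naturally produces a symmetric window (e.g.\ $|i|\le\tfrac{p-1}{2}$ or $0\le i\le p-1$, depending on conventions), not literally $1\le i\le p$; the range stated in the paper is a normalization shift, and matching it requires tracking the grading conventions through the closure construction. Second, the sentence ``a graded-Euler-characteristic (parity) computation then rules out cancellation'' is where the real work hides: in the instanton setting one does not get this for free, and in \cite{li2019direct} it is handled by proving injectivity of specific bypass maps in the relevant grading range (essentially the content that reappears here as \Cref{thm: graded bypass exact triangle}), not by a bare parity count. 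Without that injectivity argument the inductive step does not close.
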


We have seen that bypasses alter the contact structures as well as the dividing sets. Their relation with Floer theory was first discussed by Honda \cite{honda2000bypass} in the context of Heegaard Floer theory, and was then extended to instanton theory by Baldwin and Sivek \cite{baldwin2018khovanov}.
\begin{theorem}[Baldwin and Sivek \cite{baldwin2018khovanov}]\label{thm: bypass induces maps on SHI}
Suppose $M'$ is a solid torus, $\Gamma'=(1,-p,q)$ is a dividing set on $\partial M$, and $\xi'$ is a contact structure on $(M,\Gamma')$. Suppose a bypass $\beta$ attached from the exterior of $\partial M$ changes the contact structure from $\xi'$ to $\xi$ and changes dividing set from $\Gamma'$ to $\Gamma$ (the topology of the $3$-manifold $M$ remaining unchanged). Then there is a map
$$\Psi_{\beta}:SHI(-M,-\Gamma')\rightarrow SHI(-M,-\Gamma)$$
so that
$$\Psi_{\beta}(\phi(\xi'))=\phi(\xi).$$
\end{theorem}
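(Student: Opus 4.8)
The plan is to realize the exterior bypass attachment as a contact gluing and to \emph{define} $\Psi_\beta$ as the induced map on sutured instanton homology, after which the identity $\Psi_\beta(\phi(\xi'))=\phi(\xi)$ becomes a formal consequence of the functoriality of such maps together with the gluing-theoretic definition of the contact element.

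First I would decompose the bypass attachment into elementary pieces. Since an exterior bypass attachment does not change the topology of $M$, passing from $(M,\Gamma',\xi')$ to $(M,\Gamma,\xi)$ amounts to gluing a contact structure on a collar $\partial M\times[0,1]$ whose inner boundary carries $\Gamma'$ (glued to $\xi'$) and whose outer boundary carries $\Gamma$. It is a standard fact that such a bypass collar is built by attaching a contact $1$-handle followed by a contact $2$-handle, each of which is a model contact gluing along a disk in the boundary. This produces an explicit inclusion $(M,\Gamma')\hookrightarrow(M,\Gamma)$ together with a contact structure on the complementary collar, which is exactly the data needed to define a gluing map. That the resulting change of dividing set is the prescribed passage from $\Gamma'$ to $\Gamma$ follows from the local model of a bypass and the bypass attachment lemma.

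Next I would invoke the contact gluing maps for $SHI$. Baldwin and Sivek construct, for each contact handle attachment, a corresponding linear map on sutured instanton homology, and I would set $\Psi_\beta=\Psi_2\circ\Psi_1$, the $2$-handle map composed with the $1$-handle map. The crucial input is naturality with respect to contact elements: the element $\phi(\xi')$ of \Cref{thm: contact elements} is itself defined by gluing the standard contact structure on the complement of $(M,\Gamma')$ inside a suitable closure, and contact-handle maps are compatible with this gluing. Hence each elementary handle map carries the contact element before attachment to the contact element after attachment, i.e.\ $\Psi_1(\phi(\xi'))=\phi(\xi'')$ and $\Psi_2(\phi(\xi''))=\phi(\xi)$, where $\xi''$ denotes the intermediate contact structure. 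Composing, $\Psi_\beta(\phi(\xi'))=\Psi_2(\Psi_1(\phi(\xi')))=\Psi_2(\phi(\xi''))=\phi(\xi)$, which is the assertion.

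The main obstacle is the gluing-map machinery itself: one must produce well-defined maps on $SHI$ associated to contact handle attachments in the instanton setting (the residual scalar ambiguity noted earlier is harmless, since we only track whether elements vanish or are linearly independent) and verify that they intertwine the contact elements. This functoriality is precisely what Baldwin and Sivek establish; granting it, the remainder of the argument is the formal composition above. A secondary technical check is that the handle decomposition of the collar faithfully realizes the bypass, rather than some other modification of the boundary, which is where the explicit contact model of the bypass half-disk in \Cref{defn: bypass} enters.
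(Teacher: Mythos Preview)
The paper does not prove this statement at all: \Cref{thm: bypass induces maps on SHI} is quoted as a black box from Baldwin and Sivek \cite{baldwin2018khovanov}, and the paper simply uses it as input in \Cref{subsec: floer theory}. So there is no ``paper's own proof'' to compare against.

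That said, your sketch is an accurate outline of how the cited result is actually established. The decomposition of an exterior bypass attachment into a contact $1$-handle followed by a contact $2$-handle is standard (often attributed to Ozbagci), and Baldwin and Sivek do define $\Psi_\beta$ as the composite of the corresponding handle-attachment maps on $SHI$. The compatibility of these maps with the contact invariant is precisely the content of their functoriality package, as you note. Your identification of the main obstacle---constructing well-defined handle maps in the instanton setting and verifying they respect contact elements, all modulo the $\mathbb{C}^*$ ambiguity---is exactly the nontrivial work in \cite{baldwin2018khovanov}. For the purposes of this paper, however, none of that needs to be reproved; the theorem is invoked only through its statement.
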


One of the most important features of a bypass is that it fits into a bypass triangle as follows: Suppose $M$ is a solid torus and $\Gamma_1=(1,-p,q)$ is a dividing set on $\partial M$. Suppose we attach a bypass along an arc $\beta$ as shown in \Cref{fig:exact-triangle}, and the dividing set is changed to $\Gamma_2$. Suppose we further attach a second bypass $\theta$, which changes the dividing set from $\Gamma_2$ to $\Gamma_3$, followed by a third bypass $\eta$, which changes the diving set from $\Gamma_3$ back to $\Gamma_1$. This bypass triangle induces an exact triangle in instanton Floer theory, a notion which is made rigorous in the following theorem. 

\begin{figure}[htbp]
\centering
\includegraphics[width=0.5\textwidth]{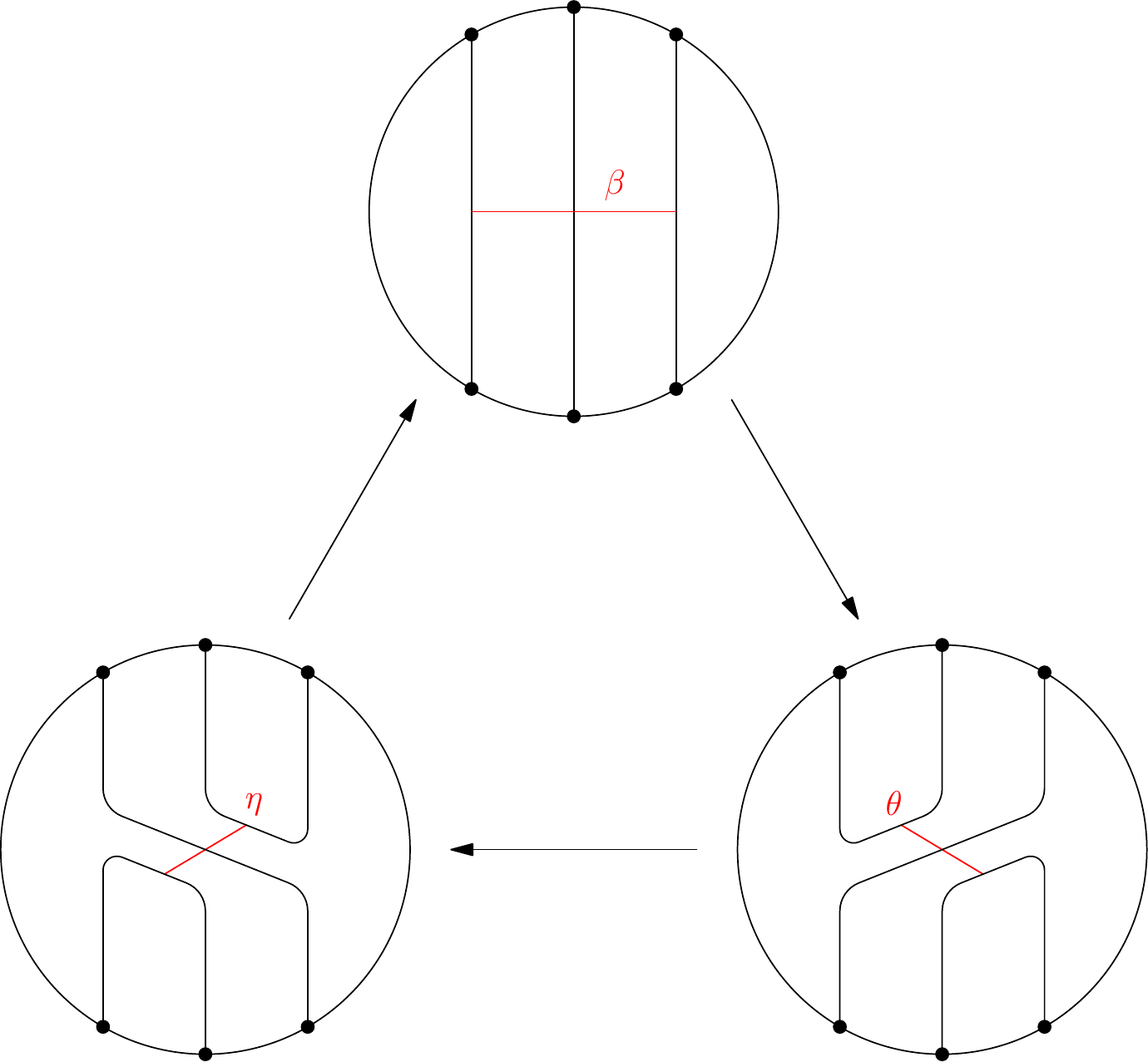}
\caption{The above bypass triangle is obtained by successively attaching exterior bypasses $\beta$, $\theta$, and $\eta$, and is exact.}
\label{fig:exact-triangle}
\end{figure}

\begin{theorem}[Baldwin and Sivek \cite{baldwin2018khovanov}]\label{thm: bypass exact triangle}
The following triangle is exact: 
\begin{equation*}
\xymatrix{
SHI(-M,-\Gamma_1)\ar[rr]^{\Psi_{\beta}}&&SHI(-M,-\Gamma_2)\ar[dl]^{\Psi_{\theta}}\\
&SHI(-M,-\Gamma_3)\ar[lu]^{\Psi_{\eta}}&
}	
\end{equation*}
\end{theorem}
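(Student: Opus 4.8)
The plan is to reduce this bypass exact triangle to the surgery exact triangle in instanton Floer homology, which is the instanton analogue of Floer's original unoriented skein exact triangle. The starting observation is that the three dividing sets $\Gamma_1$, $\Gamma_2$, and $\Gamma_3$ can be arranged to agree outside a single disk $D\subset\partial M$ and to differ inside $D$ only in the way the dividing arcs are reconnected. Indeed, the three attaching arcs $\beta$, $\theta$, $\eta$ of the bypass triangle can be placed in a common model disk so that the three resulting local configurations of the dividing set are precisely the three local pictures appearing in a skein triple. The task is then to show that this skein-type relation among the dividing sets is realized by a genuine exact triangle of the maps $\Psi_\beta$, $\Psi_\theta$, $\Psi_\eta$.

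First I would reinterpret each bypass attachment as a cobordism map. Following the contact-handle formalism, attaching a bypass along an arc is isotopic to attaching a contact $1$-handle followed by a contact $2$-handle, and each such elementary attachment induces a well-defined map on $SHI$, consistent with the map produced in \Cref{thm: bypass induces maps on SHI}. The contact $2$-handle attachment is the geometrically essential one: it corresponds to a Dehn filling, equivalently a $2$-handle cobordism, along a curve $c$ lying in the collar of $\partial M$. I would then check that the three dividing sets $\Gamma_1$, $\Gamma_2$, $\Gamma_3$ are obtained from a common sutured manifold by filling $c$ along three slopes that pairwise differ by one, so that they sit at the three vertices of a surgery triangle.

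With this dictionary in hand, I would invoke the surgery exact triangle in instanton theory (as developed in the framework of Kronheimer and Mrowka), which produces an exact triangle of cobordism maps associated to the three fillings. The remaining work is to identify these cobordism maps with the geometric bypass maps $\Psi_\beta$, $\Psi_\theta$, $\Psi_\eta$, up to the $\mathbb{C}^*$ ambiguity noted after the definition of $SHI$, which is harmless here since exactness is unaffected by rescaling. Exactness of the resulting triangle, and in particular the vanishing of each composition of two consecutive maps, is then inherited directly from the surgery exact triangle.

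The hard part will be this last identification: verifying that the map induced by the surgery $2$-handle cobordism genuinely coincides with the composite contact-handle map defining $\Psi$, and that the three cobordisms fit together into the single exact triangle rather than merely three separate long exact sequences. This requires a careful analysis of the contact handle decomposition of a bypass, the naturality of the contact gluing maps, and their compatibility with the gradings; coherently handling the $\mathbb{C}^*$-indeterminacy across all three maps is the main bookkeeping obstacle, though it does not affect the conclusion that the triangle is exact.
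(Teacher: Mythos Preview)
The paper does not prove this theorem at all: it is stated as a result of Baldwin and Sivek \cite{baldwin2018khovanov} and used as a black box in \Cref{subsec: floer theory}. There is therefore no ``paper's own proof'' to compare against. Your proposal is an outline of the standard argument from the cited source---reducing the bypass triangle to the surgery exact triangle via the contact $1$-handle/$2$-handle decomposition of a bypass---and that outline is broadly faithful to how Baldwin and Sivek actually proceed. If your goal was to supply what the paper omits, your sketch is on the right track; if your goal was to reproduce the paper's argument, be aware that there is none.
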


In \cite{li2019direct}, the first author also developed a graded version of \Cref{thm: bypass exact triangle}. Suppose $M$ is a solid torus and $\Gamma_2=(1,-p,q)$ is a dividing set on $\partial M$ so that $(p,q)\neq (1,1)$. We have a continued fraction expansion as in the introduction:
\[-\frac pq=[r_0,r_1,\dots,r_k]=r_0-\frac1{r_1-\frac1{r_2-\dots\frac1{r_k}}},\]
where $r_i\le-2$ are integers. Define
$$-\frac{p_m}{q_m}=[r_0,r_1,\dots,r_k,-m].$$
\begin{theorem}[Honda \cite{honda2000classification}]\label{thm: there are two bypasses}
There are two bypasses on $\Gamma_m=(1,-p_m,q_m)$, which we call $\beta_{+}$ and $\beta_-$, so that after attaching either of them, the resulting dividing sets are both $\Gamma=(1,-p,q)$. Furthermore, the third dividing set involved in the bypass triangle is $\Gamma_{m-1}=(1,-p_{m-1},q_{m-1})$. 
\end{theorem}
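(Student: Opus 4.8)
The plan is to reduce the statement to a purely combinatorial fact about the Farey tessellation and then invoke Honda's dictionary between Farey edges and bypass attachments on a convex torus. First I would translate the three dividing sets into slopes of the dividing curves on $\partial M=T^2$: under the standard identification the dividing set $(1,-p',q')$ consists of two parallel curves whose slope is recorded by the value of the negative continued fraction expansion of $-p'/q'$, and any fixed choice of meridian–longitude basis only alters this by an element of $GL_2(\ZZ)$, which preserves Farey adjacency. Write $s(\Gamma)$, $s(\Gamma_m)$, $s(\Gamma_{m-1})$ for the slopes attached to $\Gamma=[r_0,\dots,r_k]$, $\Gamma_m=[r_0,\dots,r_k,-m]$, and $\Gamma_{m-1}=[r_0,\dots,r_k,-(m-1)]$. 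The first, computational, step is to show that these three slopes are pairwise Farey neighbors, i.e. that each pair $a/b$, $c/d$ satisfies $|ad-bc|=1$, so that together they span a single triangle of the Farey tessellation.

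To do this I would use the matrix presentation of the convergents: writing $\prod_{i=0}^{k}\left(\begin{smallmatrix} r_i & -1 \\ 1 & 0\end{smallmatrix}\right)=\left(\begin{smallmatrix} P_k & P_{k-1} \\ Q_k & Q_{k-1}\end{smallmatrix}\right)$, each factor has determinant $1$, so $P_kQ_{k-1}-P_{k-1}Q_k=1$ and the consecutive convergents $P_k/Q_k=[r_0,\dots,r_k]$ and $P_{k-1}/Q_{k-1}=[r_0,\dots,r_{k-1}]$ are Farey neighbors. Appending $-m$ multiplies this matrix on the right by $\left(\begin{smallmatrix} -m & -1 \\ 1 & 0\end{smallmatrix}\right)$, giving $[r_0,\dots,r_k,-m]=(-mP_k+P_{k-1})/(-mQ_k+Q_{k-1})$. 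A direct $2\times2$ determinant computation then shows that the pair $([r_0,\dots,r_k,-m],[r_0,\dots,r_k])$ has determinant $-(P_kQ_{k-1}-P_{k-1}Q_k)=-1$, that the pair $([r_0,\dots,r_k,-m],[r_0,\dots,r_k,-(m-1)])$ likewise has determinant $-1$, and similarly for $-(m-1)$ against $[r_0,\dots,r_k]$. Hence $s(\Gamma)$, $s(\Gamma_m)$, $s(\Gamma_{m-1})$ are exactly the three vertices of one Farey triangle.

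Next I would feed this triangle into Honda's correspondence between bypasses and the Farey graph. By the slope-changing case of \Cref{prop: bypass attachment possibilities} (the $\gamma_1=\gamma_3\ne\gamma_2$ / positive-Dehn-twist case), attaching a single nontrivial bypass to a convex $T^2$ carrying two dividing curves of slope $s$ produces two dividing curves whose slope is a Farey neighbor of $s$, and conversely the relevant neighbor is realized. Applying this across the edge $s(\Gamma_m)$–$s(\Gamma)$ of our triangle yields a bypass on $\Gamma_m$ whose result has slope $s(\Gamma)$ and again two components, i.e. exactly $\Gamma=(1,-p,q)$. The existence of \emph{precisely two} such bypasses $\beta_+$ and $\beta_-$ comes from the fact that the elementary slope change across a single Farey edge is realized by a basic slice of $T^2\times I$, and Honda's classification of $T^2\times I$ gives exactly two tight basic slices, distinguished by sign; the two signs give the two bypasses, both inducing the same change of dividing set. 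Finally, traversing the remaining two edges $s(\Gamma)$–$s(\Gamma_{m-1})$ and $s(\Gamma_{m-1})$–$s(\Gamma_m)$ by bypasses closes the Farey triangle into a bypass triangle whose third vertex is therefore $\Gamma_{m-1}$; cyclic composability is guaranteed by the reversal duality of \Cref{prop: reverse bypass}.

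The main obstacle I anticipate is the geometry of the previous paragraph rather than the determinant bookkeeping: namely, pinning down that the bypass across the edge $s(\Gamma_m)$–$s(\Gamma)$ keeps the number of dividing curves equal to two, so that the output is genuinely the dividing set $(1,-p,q)$ and not one with more components, and justifying that there are exactly two such bypasses carrying the correct signs. This requires care with edge-rounding and with the Legendrian realization (\Cref{thm: legendrian realization}) of the attaching arc on the torus, together with the basic-slice classification. The cleanest route is to set up the whole configuration inside $T^2\times I$ collars of $\partial M$ and transport Honda's $T^2\times I$ classification to the solid-torus boundary, after which the claim reads off from the single Farey triangle established above.
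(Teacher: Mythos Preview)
The paper does not supply its own proof of this theorem; it is quoted as a result of Honda \cite{honda2000classification} and only the subsequent remark indicates how $\beta_+$ and $\beta_-$ are to be distinguished. Your proposal is correct and is essentially Honda's original argument: the determinant computation shows that the slopes of $\Gamma$, $\Gamma_m$, and $\Gamma_{m-1}$ span a single Farey triangle, and Honda's classification of minimally twisting tight structures on $T^2\times I$ then identifies the two basic slices across the edge $s(\Gamma_m)$--$s(\Gamma)$ with the two bypasses $\beta_\pm$, while the third vertex of the triangle gives $\Gamma_{m-1}$. One small simplification: your worry about the number of dividing curves jumping is unnecessary, since case~(4) of \Cref{prop: bypass attachment possibilities} (the $\gamma_1=\gamma_3\neq\gamma_2$ case you already invoke) is by construction the $\#\Gamma=2$ slope-changing case, so the output automatically has two components.
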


\begin{remark}\label{rem: distinguishing the two bypasses}
We can distinguish the two bypasses either by looking at the intersection of their attaching arc with the dividing set $\Gamma_m$ or by looking at the relative Euler classes as discussed in Honda \cite{honda2000classification}. Because we will not need to distinguish one bypass from the other, we do not present such discussion here.
\end{remark}

\begin{theorem}[Li \cite{li2019direct}]\label{thm: graded bypass exact triangle}
If $m\leq 0$, then the maps $\Psi_{\beta_{+}}$ and $\Phi_{\beta_-}$ are both zero. If $m=1$, then we have injective graded maps
$$\Psi_{\beta_+}:SHI(-M,-\Gamma_m,i)\rightarrow SHI(-M,-\Gamma,i-p_0)$$ and
$$\Psi_{\beta_-}:SHI(-M,-\Gamma_m,i)\rightarrow SHI(-M,-\Gamma,i).$$
Furthermore, if $m\geq 2$, then we have two graded bypass exact triangles for any $i\in\mathbb{Z}$, as follows. 
\begin{equation*}
\xymatrix{
SHI(-M,-\Gamma_m,i)\ar[rr]^{\Psi_{\beta_+}}&&SHI(-M,-\Gamma,i-p_{m-1})\ar[dl]^{\Psi_{\theta_+}}\\
&SHI(-M,-\Gamma_{m-1},i)\ar[lu]^{\Psi_{\eta_+}}&
}	
\end{equation*}
\begin{equation*}
\xymatrix{
SHI(-M,-\Gamma_m,i)\ar[rr]^{\Psi_{\beta_-}}&&SHI(-M,-\Gamma,i)\ar[dl]^{\Psi_{\theta_-}}\\
&SHI(-M,-\Gamma_{m-1},i-p)\ar[lu]^{\Psi_{\eta_-}}&
}	
\end{equation*}
Moreover, $\Psi_{\theta_+}$ and $\Psi_{\theta_-}$ are both zero.
\end{theorem}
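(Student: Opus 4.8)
The plan is to upgrade the ungraded bypass exact triangle of \Cref{thm: bypass exact triangle} to a graded statement and then read off every assertion from the explicit grading computation in \Cref{thm: SHI for sutured solid torus}. The two triangles in question arise by applying \Cref{thm: bypass exact triangle} separately to the two bypasses $\beta_+$ and $\beta_-$ of \Cref{thm: there are two bypasses}; both have the same three vertices $SHI(-M,-\Gamma_m)$, $SHI(-M,-\Gamma)$, and $SHI(-M,-\Gamma_{m-1})$. The heart of the argument is to prove that each of the induced maps $\Psi_{\beta_\pm}$, $\Psi_{\theta_\pm}$, $\Psi_{\eta_\pm}$ is homogeneous with respect to the $\mathbb Z$-grading and to pin down its degree. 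Once the degrees are known, the vanishing of the $\theta$-maps and the injectivity and exactness claims reduce to bookkeeping with the support intervals $[1,p_m]$, $[1,p]$, and $[1,p_{m-1}]$.

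First I would establish gradedness and compute the shifts. Each bypass map is induced by a cobordism carrying the half-disk of \Cref{thm: there are two bypasses}, and the $\mathbb Z$-grading of \Cref{thm: SHI for sutured solid torus} is defined through a meridional grading surface; the degree of a bypass map should then equal the algebraic intersection of the attaching data with that surface. The two bypasses $\beta_+$ and $\beta_-$ are distinguished precisely by their relative Euler classes (\Cref{rem: distinguishing the two bypasses}), and this is exactly what yields the two distinct shifts: I expect $\Psi_{\beta_+}$ to lower the grading by $p_{m-1}$ and $\Psi_{\beta_-}$ to preserve it, with the shifts of $\theta_\pm$ and $\eta_\pm$ forced by the same computation. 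Carrying out this relative Euler class computation and matching Honda's description of the attaching arcs to the grading convention of \Cref{thm: SHI for sutured solid torus} is the main obstacle; everything downstream is formal.

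Next I would record the continued-fraction arithmetic. From the matrix presentation of $[r_0,\dots,r_k,-m]$ one obtains the linear recursion $p_m=p_{m-1}+p$, so the three supports are $[1,p_{m-1}+p]$, $[1,p]$, and $[1,p_{m-1}]$. Feeding the computed shifts into these intervals, for $m\ge 2$ the source and target of $\Psi_{\theta_+}$ are supported in $[p_{m-1}+1,p_m]$ and $[1,p_{m-1}]$, and those of $\Psi_{\theta_-}$ in $[1,p]$ and $[p+1,p_m]$; in both cases the two intervals are disjoint, so $\Psi_{\theta_\pm}=0$ grading by grading. This is the clean payoff of the grading refinement: feeding $\Psi_{\theta_\pm}=0$ back into exactness of \Cref{thm: bypass exact triangle} yields precisely the two asserted graded exact triangles, and in each grading each triangle collapses to a short exact sequence $0\to SHI(-M,-\Gamma_{m-1})\to SHI(-M,-\Gamma_m)\to SHI(-M,-\Gamma)\to0$.

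Finally I would handle the boundary cases $m\le 1$, where the continued fraction $[r_0,\dots,r_k,-m]$ leaves the admissible range $0<q\le p$ and must be rewritten via the symmetry $(1,-p_m,q_m)\sim(1,p_m,-q_m)$ before \Cref{thm: SHI for sutured solid torus} applies. For $m=1$ the support disjointness that killed $\Psi_{\theta_\pm}$ fails, so the triangle does not split; instead I would argue directly from the (now overlapping) supports and exactness that $\Psi_{\beta_\pm}$ is injective with the stated shifts $-p_0$ and $0$. For $m\le 0$ I expect the maps to vanish because the rewritten configuration forces the image of $\Psi_{\beta_\pm}$ outside the support of the target, or because the associated contact structure is overtwisted so the relevant contact element vanishes by \Cref{thm: contact elements}; I would verify this degenerate regime case by case, tracking signs carefully, as it is the part of the argument most sensitive to the conventions. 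I anticipate the grading-shift computation of the second paragraph to be the genuine difficulty, with the remaining steps being formal once the shifts are in hand.
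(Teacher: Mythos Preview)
The paper does not prove this theorem: it is stated with attribution to \cite{li2019direct} and used as a black-box input from Floer theory in \Cref{subsec: floer theory}, with no proof or sketch provided. There is therefore nothing in the present paper against which to compare your proposal; your outline is a reasonable guess at the shape of the argument in \cite{li2019direct}, but verifying it would require consulting that reference rather than this one.
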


\section{\texorpdfstring{Bypasses on a solid torus $S^1\times D^2$}{The solid torus}} \label{sec: solid torus} 
\subsection{States and bypasses} 
Let $\xi$ be a tight contact structure on a solid torus $M$ with $\partial M$ convex. Recall that its dividing set $\Gamma_{\partial M}$, which we often simply denote as $\Gamma$ when there is no confusion, can be parametrized as $(n,-p,q)$, where $0<q\le p$ and $\gcd(p,q)=1$, and where we use the orientations shown in \Cref{fig:lambda-mu-torus}. If $\Gamma$ can be parametrized as $(n,-p,q)$, then we abbreviate our notation and say that $\Gamma=(n,-p,q)$. 

\begin{figure}[htbp] 
\centering
\includegraphics[scale=0.75]{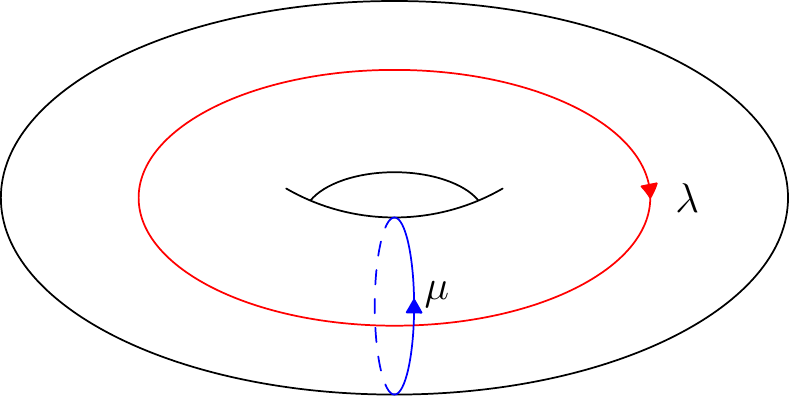}
\caption{We take our orientations for $\lambda$ and $\mu$ to be as shown above. Note that the suture $(n,-p,q)$ means that there are $2n$ components, each of which follows the $\lambda$ direction $-p$ times and the $\mu$ direction $q$ times.}
\label{fig:lambda-mu-torus}
\end{figure} 

Moreover, we can label the components in order along the $\mu$ direction as $\gamma_1,\gamma_2,\dots,\gamma_{2n}$. Also suppose, without loss of generality, that $\gamma_1$ is oriented in the $-\lambda$ and $+\mu$ directions. We can also fix some point $w$ on $\gamma_1$. For any meridian disk $D$, label the points of intersection between $\Gamma=\bigcup\gamma_i$ and $\partial D$ as $P_1,P_2,\dots,P_{2np}$ such that $P_i,P_{2n+i},\dots,_{2n(p-1)+i}\in\gamma_i$, the point $P_1$ is the first intersection of $\partial D$ and $\gamma_1$ if we begin at $w$ and go along the component $\gamma_1$, and the points are ordered in the $\mu$ direction. 

Suppose $(n,-p,q)\neq (1,-1,1)$ and take $D$ to be a meridian disk. We may perturb $\partial D$ to make it Legendrian by the Legendrian realization principle in \Cref{thm: legendrian realization}. Then \Cref{thm: twisting number relative to framing} implies that $t(\partial D)\leq 0.$ Hence \Cref{thm: convex flexibility} applies, and we can further perturb $D$ to be convex. As discussed by Honda \cite{honda2000classification}, since the dividing set $\Gamma_{\partial M}$ intersects $\partial D$ at $2np$ points, it follows that the dividing set $\Gamma_D$ on $D$ also intersects $\partial D$ at $2np$ points, which alternate between the points of $\Gamma\cap \partial D$. According to Giroux's criterion in \Cref{thm: giroux's criterion}, moreover, the dividing set $\Gamma_D$ does not admit any closed components. 

\begin{definition}\label{defn: states}
Suppose $\xi$ is a tight contact structure on a solid torus $M$ with $\partial M$ convex and dividing set $\Gamma=(n,-p,q)$. A convex meridian disk $D$ together with the dividing set $\Gamma_{D}$ on $D$ is called a \emph{state} of $\xi$. To specify the pre-fixed dividing set $\Gamma$ on $\partial M$, we also call it a \emph{$(n,-p,q)$-state}. 
\end{definition}

See \Cref{fig:meridian-dividing-set} for an example of a possible state $\Gamma_D$. 

\begin{figure}[htbp] 
\centering
\includegraphics[scale=0.75]{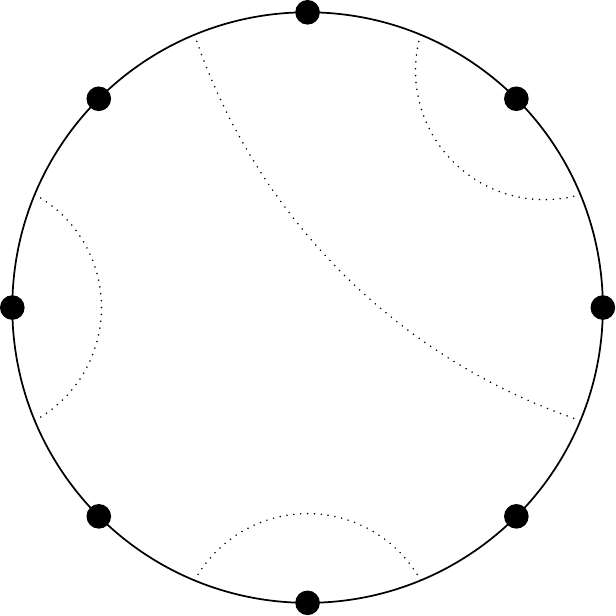}
\caption{The dotted arcs are the dividing curves on $\Sigma=D^2$, while the dots are the points of $\Gamma\cap\partial D$.}
\label{fig:meridian-dividing-set}
\end{figure} 

\begin{remark}
Note that there are a total of $C_{np}$ possible candidates for $(n,-p,q)$-states, though some of them might not admit any tight contact structure. Here $C_{k}$ is the $k$-th Catalan number. 
\end{remark}

Recall that for any given meridian disk $D$, we have labeled the points of $\Gamma\cap\partial D$ from $1$ to $2np$. If $np>1$, then we make the following definition.

\begin{definition}\label{defn: bypass on i}
Suppose $(D,\Gamma_D)$ is a $\Gamma$-state for the contact structure $\xi$ on $(M,\Gamma)$, and $\gamma$ is a boundary-parallel component of $\Gamma_D$ (cf. \Cref{prop: bypass abundance}). If the disk co-bounded by $\gamma$ and part of $\partial D$ intersects $\Gamma$ at the point $P_i$, we say that $\gamma$ is {\it centered at} $P_i$. Furthermore, we also say that the state $(D,\Gamma_D)$ \emph{admits a bypass on $\gamma_j$}, where $j$ is the residue of $i$ modulo $2n$. 
\end{definition}

A priori, a fixed contact structure could admit multiple states which do not necessarily exist simultaneously. However, the following lemma guarantees that this is not possible. 

\begin{lemma}\label{lem: states exists simultaneously}
Suppose $\xi$ is a tight contact structure on a solid torus $M$ with $\partial M$ convex and dividing set $\Gamma=(n,-p,q)$. Suppose $(D_i,\Gamma_{D_i})$ for $1\le i\le k$ are all possible states of $\xi$. Then there exist meridian disks $D_1',D_2',\dots,D_k'$ in $M$ so that the following holds. 

\begin{enumerate}[(i)]
\item $D'_1,D'_2,\dots, D'_k$ are all convex with Legendrian boundary and pairwise disjoint.
\item For each $1\le i\le k$, the dividing set $\Gamma_{D'_i}$ coincide with $\Gamma_{D_i}$.
\end{enumerate} 
\end{lemma}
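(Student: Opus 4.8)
The plan is to realize all the states simultaneously by perturbing the given meridian disks so that they become pairwise disjoint convex surfaces, each carrying its prescribed dividing set. The essential difficulty is that a priori the states $(D_i, \Gamma_{D_i})$ are only known to exist one at a time: each is obtained by some perturbation of a fixed meridian disk, but there is no reason the $D_i$ as given are disjoint, nor that two of them can be made convex at once without interfering. So the heart of the argument is a genericity/isotopy statement that lets us push them apart while retaining convexity and the dividing-set data.

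First I would set up a single reference meridian disk $D$ and observe, via the discussion preceding \Cref{defn: states}, that $\partial D$ may be taken Legendrian with $t(\partial D) \le 0$, so that \Cref{thm: convex flexibility} applies and $D$ may be made convex. Each state $\Gamma_{D_i}$ is, by definition, the dividing set of \emph{some} convex meridian disk isotopic to $D$; the content is that different choices of convex realization of the same underlying disk yield the various states. I would then take $k$ parallel copies $D_1', \dots, D_k'$ of $D$, stacked along the $S^1$-factor of $M = S^1 \times D^2$, so that they are automatically pairwise disjoint and each is isotopic to a meridian disk. These copies sit in pairwise-disjoint product neighborhoods $D \times [a_i, b_i]$.

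Next, on each slice I would independently perturb $D_i'$ to be convex with the desired dividing set. The key point is that the perturbations of \Cref{thm: legendrian realization} and \Cref{thm: convex flexibility} are $C^0$-small near the boundary and $C^\infty$-small in the interior, so if the product neighborhoods $D \times [a_i, b_i]$ are chosen thin enough and mutually disjoint, each perturbation stays inside its own neighborhood and does not disturb the others. This preserves disjointness, giving (i). For (ii) I would argue that within a single product neighborhood the ambient contact structure is (after shrinking) an $I$-invariant neighborhood of a convex slice, so that the classification of convex realizations of the meridian disk in that neighborhood reproduces exactly the same list of possible dividing sets as for $D$ itself; in particular the realization $\Gamma_{D_i}$ is achievable on $D_i'$. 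Invoking that two convex surfaces with the same dividing set in such a neighborhood are isotopic rel boundary (via the $I$-invariance from \Cref{prop: trivial bypasses are trivial}) then matches $\Gamma_{D_i'}$ with $\Gamma_{D_i}$ up to isotopy.

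The main obstacle, and the step I would spend the most care on, is this last matching: showing that a convex realization achievable on $D$ is \emph{also} achievable on a disjoint parallel copy $D_i'$ with the \emph{same} dividing set, without the perturbations colliding. Concretely, one must rule out the possibility that making $D_i'$ convex forces the dividing set of a neighboring $D_j'$ to change, which is exactly where the thinness of the product neighborhoods and the locality of the convexity perturbation must be used quantitatively. I expect that the cleanest route is to first make all the $D_i'$ convex with \emph{whatever} dividing sets arise, then use that each $\Gamma_{D_i}$ is realizable by an isotopy supported in an arbitrarily small neighborhood of $D_i'$ (again by \Cref{thm: convex flexibility} together with edge-rounding/Legendrian realization arguments), and finally appeal to the fact that such small isotopies can be performed simultaneously because their supports are disjoint.
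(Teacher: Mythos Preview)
Your approach has a genuine gap at exactly the step you flagged as the main obstacle. The claim that each $\Gamma_{D_i}$ can be realized on $D_i'$ by an isotopy supported in an arbitrarily small neighborhood is not justified by \Cref{thm: convex flexibility}: that result only says a given surface can be made convex by a small perturbation, with \emph{some} dividing set determined by the ambient contact structure; it does not let you prescribe which dividing set appears. In fact the dividing set of a convex surface is stable under $C^\infty$-small isotopy, so a small isotopy \emph{cannot} change it. Distinct states of $\xi$ are related by bypass moves on the meridian disk, and such moves are not supported in arbitrarily thin product neighborhoods---they require the disk to sweep across a bypass half-disk lying somewhere in the interior of $M$. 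Consequently there is no reason the isotopy carrying a parallel copy $D_i'$ (with whatever dividing set it happens to acquire) to a disk realizing $\Gamma_{D_i}$ stays disjoint from the other $D_j'$. Your appeal to $I$-invariance works against you here: in an $I$-invariant neighborhood of a convex slice every parallel slice carries the \emph{same} dividing set, so you cannot manufacture a different state without leaving that neighborhood.

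The paper sidesteps this localization problem entirely with a different idea: pass to the cyclic cover $M'\to M$ of degree $Np+1$ for $N$ large. Then $M'$ is again a solid torus, and after a meridional Dehn twist the pullback $\xi'$ has boundary dividing set $(n,-p,q)$; since on a solid torus the state determines the tight contact structure (Honda), $\xi'$ is contactomorphic to $\xi$. Each state of $\xi$ lifts to a state of $\xi'\cong\xi$, and because the cover has $Np+1\gg k$ fundamental-domain blocks, one simply places each lifted meridian disk in its own block, obtaining disjointness for free. No control on the size of any isotopy is needed.
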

\begin{proof}
We look at the $(Np+1)$-cover of $(M,\Gamma)$, which we call $(M',\Gamma')$. Here $N$ is a sufficiently large positive integer. Note that $M'$ is still a solid torus, and $\Gamma'=(n,-p,(Np+1)q)$. We can send $\Gamma'$ back to $\Gamma$ by performing Dehn twists along a meridian disk. Let $\xi'$ be the pullback of $\xi$ on $M'$. By construction, a state of $\xi$ is a state of $\xi'$. Because the state determines the contact structure on a solid torus by Honda \cite{honda2000classification}, we see that $\xi$ and $\xi'$ are contactomorphic under the diffeomorphism $(M,\Gamma)\cong(M',\Gamma')$ induced by a Dehn twist along a meridian disk. Note $(M',\xi')$ is the $(Np+1)$-cover of $(M,\xi)$, and we can take $N$ large so that $Np+1\gg k$. Then we can pick different states for $\xi=\xi'$ from different blocks of $M'$.
\end{proof}

Bypasses also give rise to states as in the following lemma.

\begin{lemma}\label{lem: bypasses also give rise to states}
Suppose $M$ is a solid torus and $\Gamma=(n,-p,q)$ is a dividing set so that $n>1$. Let the components of $\Gamma$ be $\gamma_1,\dots,\gamma_{2n}$. Suppose that $\xi$ is a contact structure on $(M,\Gamma)$ admitting a bypass $\alpha$ whose attaching arc intersects $\gamma_{i-1}$, $\gamma_i$, and $\gamma_{i+1}$. Then $\xi$ admits a state that admits a bypass on $\gamma_i$. 
\end{lemma}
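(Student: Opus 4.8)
The plan is to transfer the given bypass from $\partial M$ onto a meridian disk, where it becomes a boundary-parallel bypass centered on $\gamma_i$, thereby exhibiting the required state.

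First I would produce the underlying meridian disk. Recall that as one traverses a meridian of $\partial M$ in the $\mu$ direction, the intersection points with $\Gamma$ run cyclically through $\gamma_1,\gamma_2,\dots,\gamma_{2n}$; in particular three consecutive intersection points lie on $\gamma_{i-1},\gamma_i,\gamma_{i+1}$. Write the attaching arc of $\alpha$ as the Legendrian arc from $p_1$ to $p_3$ through $p_2$, with $p_1\in\gamma_{i-1}$, $p_2\in\gamma_i$, $p_3\in\gamma_{i+1}$, so that $p_2$ is the \emph{middle} intersection. Using \Cref{thm: legendrian realization} I would extend $\alpha$ to a Legendrian meridian $\mu\supset\alpha$ whose three consecutive intersections with $\Gamma$ along $\alpha$ are $P_a=p_1\in\gamma_{i-1}$, $P_{a+1}=p_2\in\gamma_i$, $P_{a+2}=p_3\in\gamma_{i+1}$ (with $a\equiv i-1 \bmod 2n$). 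Since $n>1$ we have $\#(\mu\cap\Gamma)=2np>2$, so \Cref{thm: twisting number relative to framing} gives $t(\mu)=-np<0$, and \Cref{thm: convex flexibility} lets us take $\mu$ to bound a convex meridian disk $D$. Thus $(D,\Gamma_D)$ is a state of $\xi$ in the sense of \Cref{defn: states}, and $\alpha\subset\partial D$.

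The key step is to absorb the bypass half-disk $B$ into $D$. Since $B$ lies in the interior of $M$ and is attached along $\alpha\subset\partial D$ from the interior side, I would form the new disk $D'=\overline{(D\setminus\operatorname{collar}(\alpha))}\cup B$, replacing the subarc $\alpha$ of $\partial D$ by the outer Legendrian arc $\partial B\setminus\alpha$ (which shares the endpoints $p_1=P_a$ and $p_3=P_{a+2}$ with $\alpha$) and filling in with $B$. Its boundary $(\mu\setminus\alpha)\cup(\partial B\setminus\alpha)$ is again a Legendrian meridian, so after reconvexifying via \Cref{thm: convex flexibility} we obtain a convex meridian disk $D'$, again a state of $\xi$. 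By the Bypass Attachment Lemma, $\Gamma_{D'}$ differs from $\Gamma_D$ by the local bypass reconnection at the three arcs of $\Gamma_D$ adjacent to $P_a,P_{a+1},P_{a+2}$. A direct inspection of this reconnection shows that it creates a boundary-parallel component of $\Gamma_{D'}$ whose cut-off half-disk meets $\partial D'$ in the single point $P_{a+1}\in\gamma_i$. Hence, by \Cref{defn: bypass on i}, the state $(D',\Gamma_{D'})$ admits a bypass on $\gamma_i$, which is what we want.

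I expect the main obstacle to be the local dividing-set bookkeeping in the key step: one must verify that absorbing $B$ reconnects exactly the three arcs of $\Gamma_D$ meeting $P_a,P_{a+1},P_{a+2}$ and that the resulting boundary-parallel arc encircles the \emph{middle} point $P_{a+1}\in\gamma_i$ rather than $P_a$ or $P_{a+2}$, or than failing to be boundary-parallel at all; this is exactly where the hypothesis that $\alpha$ crosses three \emph{distinct consecutive} curves (cf. the remark following \Cref{prop: bypass attachment possibilities}) is used. A secondary point is to check the side conventions, namely that the interior bypass $B$ lies on the appropriate side of $D$ so that the cut-and-paste is realizable, and that all perturbations are supported away from $\partial M$, so that the boundary dividing set remains $(n,-p,q)$ and $D'$ is genuinely a state of the same contact structure $\xi$.
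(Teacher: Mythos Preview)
Your overall strategy---turning the given bypass into a boundary-parallel arc on a meridian disk---is the same as the paper's, but the execution of your key step has a real gap. The bypass half-disk $B$ is attached to $\partial M$, not to $D$: its outer Legendrian arc $\beta=\partial B\setminus\alpha$ lies in the \emph{interior} of $M$. Consequently the boundary of your $D'=\overline{(D\setminus\operatorname{collar}(\alpha))}\cup B$, namely $(\mu\setminus\alpha)\cup\beta$, is not a curve on $\partial M$ at all. Thus $D'$ is not a properly embedded meridian disk, \Cref{thm: convex flexibility} does not apply to it, and $(D',\Gamma_{D'})$ is not a state in the sense of \Cref{defn: states}. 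A related problem is your appeal to the Bypass Attachment Lemma: that lemma describes how attaching $B$ alters $\Gamma_{\partial M}$, not $\Gamma_D$, so it does not justify the claimed local reconnection of the three arcs of $\Gamma_D$ near $P_a,P_{a+1},P_{a+2}$.

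The paper avoids this by reversing the order of construction. Rather than first building an auxiliary meridian disk and then trying to splice $B$ in, one starts from $B$ itself---already a convex half-disk with Legendrian boundary and with $\alpha=B\cap\partial M$---and \emph{extends} it to a meridian disk $D\supset B$, invoking \Cref{thm: legendrian realization} and \Cref{thm: convex flexibility} while keeping the already Legendrian/convex piece $B$ fixed. The single dividing arc $\Gamma_B$ then sits inside $\Gamma_D$ as a boundary-parallel component centered at the middle point $p_2\in\gamma_i$, which gives the required state immediately. Your first paragraph already assembles all the needed ingredients; the fix is simply to arrange $B\subset D$ from the outset and drop the cut-and-paste step entirely.
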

\begin{proof}
By \Cref{defn: bypass}, the bypass disk is itself a convex disk whose intersection with $\partial M$ is Legendrian. We can extend the bypass disk to be a meridian disk, and apply \Cref{thm: legendrian realization} and \Cref{thm: convex flexibility} to perturb the meridian disk to be convex with Legendrian boundary. However, when perturbing the meridian disk, the part which have already been Legendrian or convex can be fixed. The lemma then follows.
\end{proof}

\subsection{Parametrizing the contact structures}
For a given potential dividing set $\Gamma=(n,-p,q)$ on a solid torus $M$, we want to parametrize and stratify the set of all (isotopy classes of) tight contact structures according to the states they admit. To do so, we introduce the following notations and definitions. 

\begin{definition}
For a tuple $\vec{x}=(x_1,\dots,x_{2n})\in\{0,1\}^{2n},$ define
$$\norm{\vec{x}}=\sum_{i=1}^{2n}x_i.$$
For two tuples $\vec{x}^1=(x_1^1,\dots,x_{2n}^1)$ and $\vec{x}^2=(x_1^2,\dots,x_{2n}^2)$, define
$$\vec{x}^1\cap \vec{x}^2=(x_1^1\cdot x_1^2,\dots,x_{2n}^1\cdot x_{2n}^2).$$
	
\end{definition}

\begin{definition}
Suppose	$\Gamma=(n,-p,q)$ is a dividing set on a solid torus $M$. Then we define $\mathcal{C}(n,-p,q)$ to be the set of all contact structures on $(M,\Gamma)$. Define $\mathcal{T}(n,-p,q)\subset \mathcal{C}(n,-p,q)$ to be the set of all tight ones, and let
$N(n,-p,q)=|\mathcal{T}(n,-p,q)|$.
\end{definition}

\begin{definition}
Write $\Gamma=\gamma_1\cup\dots\cup\gamma_{2n}$, where $\gamma_i$ are the components of $\Gamma$. For a tuple
$\vec{x}\in\{0,1\}^{2n},$ define
$$\mathcal{T}_{\vec{x}}(n,-p,q)\subset \mathcal{T}(n,-p,q)$$
to be the set of all isotopy classes of tight contact structures $\xi$ such that there is a state $(D,\Gamma_D)$ of $\xi$ which admits a bypass on $\gamma_i$ for all $i$ with $x_i=1$ (cf. \Cref{defn: bypass on i}). An example is shown in \Cref{fig:tx-def}.
\end{definition} 

\begin{figure}[htbp]
\centering
\includegraphics[width=0.7\textwidth]{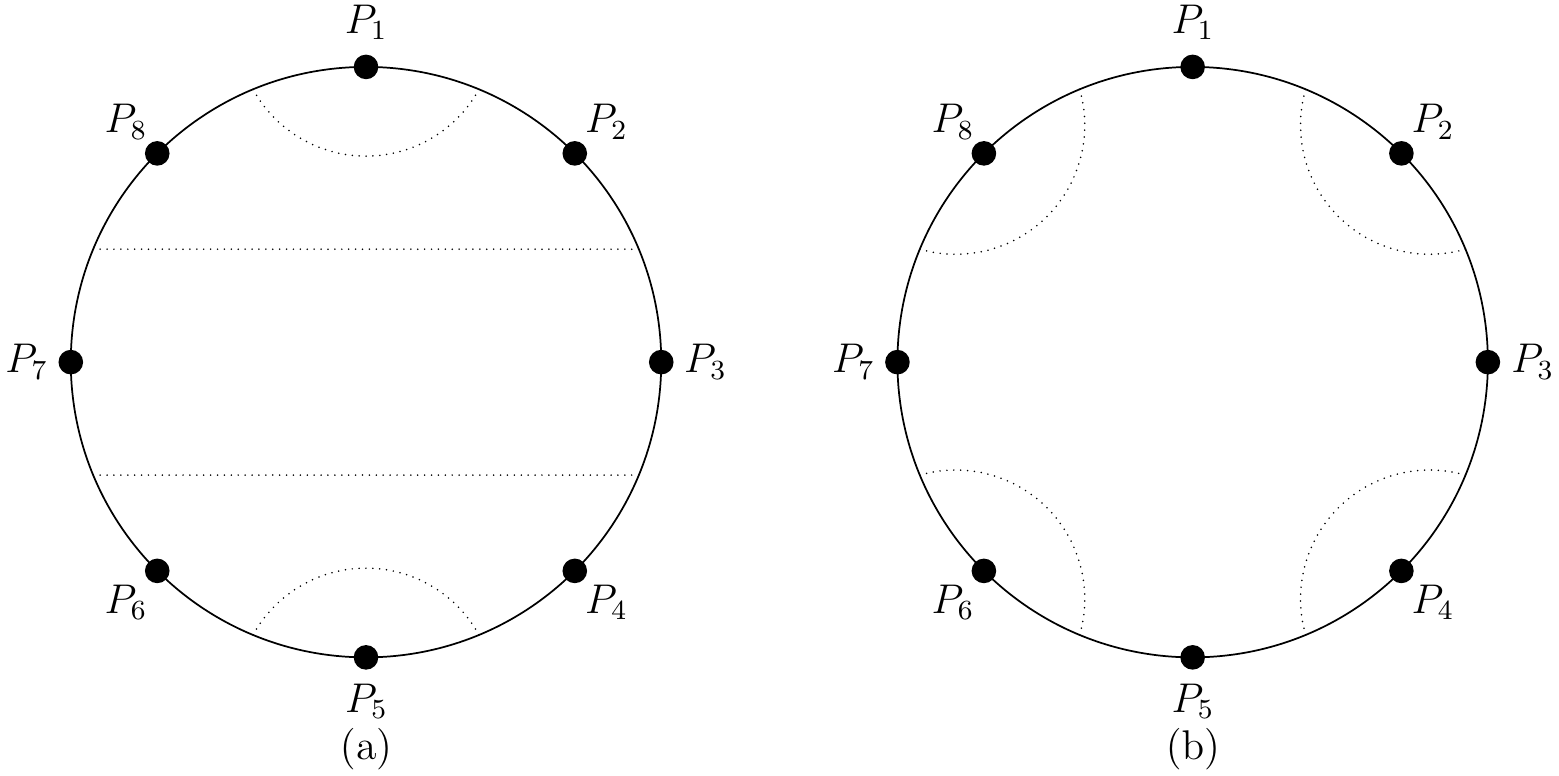}
\caption{If a tight contact structure $\xi$ with dividing set $(2,-2,1)$ admits the above two states, then it would be an element of $\mathcal T_{\vec x}(2,-2,1)$ for every $\vec x\in\{0,1\}^2$. If, however, it only admitted (a), then $\xi$ would not be an element of $\mathcal T_{(0,1)}$ as the only boundary-parallel bypasses admitted by the disk (a) are centered on $\gamma_1$.}
\label{fig:tx-def}
\end{figure}

From the definition, it is clear that
\begin{equation}\label{eq: inclusion-exclusion 1}
	\mathcal{T}(n,-p,q)=\bigcup_{\vec{x}\in\{0,1\}^{2n}}\mathcal{T}_{\vec{x}}(n,-p,q).
\end{equation}
Moreover, if $\vec{x}^1$ and $\vec{x}^2$ are two tuples, then
\begin{equation}\label{eq: inclusion-exclusion 2}
	\mathcal{T}_{\vec{x}^1}(n,-p,q)\cap \mathcal{T}_{\vec{x}^2}(n,-p,q)=\mathcal{T}_{\vec{x}^1\cap\vec{x}^1}(n,-p,q).
\end{equation}
Hence to utilize the inclusion-exclusion principle to calculate $N(n,-p,q)=|\mathcal{T}(n,-p,q)|$, we need to understand $|\mathcal{T}_{\vec{x}}(n,-p,q)|$ for all possible $\vec{x}\in\{0,1\}^{2n}$. We first tackle the case when $\norm{\vec x}>n$. 

\begin{lemma} \label{lem: disallow adjacent bypasses}
Suppose $n>1$. Let $\vec{x}$ be a tuple $(x_1,\dots,x_{2n})\in\{0,1\}^{2n}$ such that there exists $i\in\{1,\dots,2n\}$ with $x_i=x_{i+1}=1$, where $x_{2n+1}$ is the same as $x_1$. Then
$$\mathcal{T}_{\vec{x}}(n,-p,q)=\emptyset.$$
\end{lemma}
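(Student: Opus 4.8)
The plan is to argue by contradiction: assuming a tight $\xi$ that admits such a state, I will produce from it a convex torus whose dividing set has a homotopically trivial closed component, contradicting Giroux's criterion (\Cref{thm: giroux's criterion}).

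First I would extract two bypasses. By hypothesis there is a single state $(D,\Gamma_D)$ of $\xi$ admitting a boundary-parallel curve $\gamma$ centered at some $P_a$ with $a\equiv i \pmod{2n}$ and a boundary-parallel curve $\gamma'$ centered at some $P_b$ with $b\equiv i+1\pmod{2n}$. As in the discussion preceding \Cref{defn: bypass on i}, each boundary-parallel component cuts off a half-disk meeting $\Gamma$ in exactly one point, so the half-disks cut off by $\gamma$ and $\gamma'$ contain the distinct points $P_a\ne P_b$ and are therefore disjoint. Applying \Cref{prop: bypass abundance} to each (valid since $n>1$ forces $np>1$, so $D$ is not a disk with $t(\partial D)=-1$) and extending as in \Cref{lem: bypasses also give rise to states}, I obtain two disjoint interior bypasses on $\partial M$: a bypass $\alpha$ whose attaching arc crosses $\gamma_{i-1},\gamma_i,\gamma_{i+1}$ in order, and a bypass $\beta$ whose attaching arc crosses $\gamma_i,\gamma_{i+1},\gamma_{i+2}$ in order. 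Since $n>1$, the three curves met by each arc are distinct.

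Next I would attach both bypasses. Because the two attaching arcs are disjoint, the bypasses can be attached simultaneously, and by \Cref{prop: bypasses commute} the order is irrelevant; as they are interior bypasses, attaching them amounts to peeling off collars, producing a solid torus $M''\subset M$ with convex boundary on which $\xi$ restricts to a tight contact structure $\xi''$. I then compute the new dividing set $\Gamma''$ on $\partial M''$ using the attachment rule of \Cref{fig:bypass-ds}. The key local observation concerns the two adjacent strands $\gamma_i$ and $\gamma_{i+1}$: since the attaching arc of $\alpha$ meets the three distinct curves $\gamma_{i-1},\gamma_i,\gamma_{i+1}$, its reconnection produces an arc joining $\gamma_i$ to $\gamma_{i+1}$ across the region between them at the location of $\alpha$; likewise the attaching arc of $\beta$ produces an arc joining $\gamma_i$ to $\gamma_{i+1}$ at the (disjoint) location of $\beta$. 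These two new arcs, together with the portions of $\gamma_i$ and $\gamma_{i+1}$ running between them, bound a rectangle inside the annular region separating $\gamma_i$ from $\gamma_{i+1}$, and hence close up into a homotopically trivial closed curve in $\Gamma''$. The existence of this null-homotopic component then contradicts tightness: by \Cref{thm: giroux's criterion}, $\partial M''$ has no tight neighborhood, so $\xi''$ is overtwisted, contradicting that $\xi''$ is the restriction of the tight structure $\xi$. Hence no such tight $\xi$ exists and $\mathcal{T}_{\vec{x}}(n,-p,q)=\emptyset$.

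I expect the main obstacle to be the local dividing-set computation: carefully reading off the reconnection of \Cref{fig:bypass-ds} for each of the two overlapping reduction bypasses and verifying that the two resulting connections between $\gamma_i$ and $\gamma_{i+1}$ genuinely cobound a disk (rather than an essential annulus), so that the new closed component is null-homotopic. Confirming that the bypasses are disjoint and that attaching them yields $\xi$ restricted to a sub-solid-torus (so that tightness is inherited) should be routine given \Cref{prop: bypass abundance}, \Cref{lem: bypasses also give rise to states}, and \Cref{prop: bypasses commute}.
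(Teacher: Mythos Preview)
Your argument is correct and follows essentially the same strategy as the paper: extract two interior bypasses centered on adjacent components, attach both, observe that the resulting dividing set on the new boundary torus contains a homotopically trivial curve, and invoke Giroux's criterion to contradict tightness of the restriction.

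The only noteworthy difference is in how the two disjoint bypasses are obtained. You work on a \emph{single} meridian disk $D$ carrying two boundary-parallel arcs, while the paper invokes \Cref{lem: states exists simultaneously} to realize the two bypasses on two \emph{parallel, disjoint} meridian disks $D_1,D_2$. The paper's route makes disjointness of the attaching arcs on $\partial M$ immediate. In your single-disk approach the disjointness of the boundary-parallel \emph{half-disks} in $D$ is clear, but the bypass attaching arcs on $\partial M$ run through the neighbouring points $P_{a-1},P_a,P_{a+1}$ and $P_{b-1},P_b,P_{b+1}$; one should note that $b\neq a\pm 1$ (two boundary-parallel arcs in $\Gamma_D$ cannot be centered at consecutive $P_j$'s, since they would have to share the unique endpoint of $\Gamma_D$ lying between $P_j$ and $P_{j+1}$), so the attaching arcs are indeed disjoint. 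Once that is said, your tracing of the null-homotopic component --- two turn-arounds between $\gamma_i$ and $\gamma_{i+1}$ joined by arcs of those two curves, with cancelling homology contributions --- matches the paper's computation.
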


\begin{proof}
Suppose that $\xi\in\TT_{\vec x}(n,-p,q)$ for such a tuple $\vec x$. Suppose without loss of generality that $i=1$, so that $x_1=x_2=1$. Note that \Cref{lem: states exists simultaneously} implies that there exist states $(D_1,\Gamma_{D_1})$ and $(D_2,\Gamma_{D_2})$ such that $D_i$ admits a bypass on $\gamma_i$ for $i=1,2$ (c.f. \Cref{defn: bypass on i}). Say that the bypasses are $\alpha$ and $\beta$, and are centered on $P_a$ and $P_b$, respectively. 

Now if we were to attach both bypasses, we would be taken from a contact structure $\xi$ on $\Gamma=(n,-p,q)$ to a contact structure $\xi'$ on $\Gamma'$. 

The bypass attachment lemma tells us that the dividing set only changes at the points $P_a$ and $P_{a\pm1}$ on $D_1$, and at the points $P_b$ and $P_{b\pm1}$ on $D_2$. We call these six points the ``affected points.'' 

Consider the component $\gamma\subset\Gamma'$ which contains the point $a$ on $D_1$ (or, more accurately, contains some point on $\gamma_1$ to the left of $P_a$). Note that the point $P_{a+1}$ also lies on $\gamma$ as well. 

Since $n\ne1$, observe that as we traverse along $\gamma_1$, the only affected point we will hit is $P_b$ on $D_2$, which we will reach after $k$ twists along the meridian. This will take us to $P_{b-1}$ on $D_2$, which will take us back to $P_a$ after $-k$ twists along the meridian. Thus $\gamma$ in this case is a homotopically trivial curve. 

%Thus suppose that $n=1$ and that, as we traverse $\gamma_1$, we first encounter $P_{a-1}$ on $D_1$, which is connected to (the right side of) $a$ on $D_1$. 

%Note that we can split this into two cases, depending on the orientation of the two disks. In particular, either $P_t\in\gamma_1$ on $D_1$ is taken to $P_t$ on $D_2$ as we go in the $-\lambda$ direction, or it is taken to $P_{t+2q}$ (mod $2p$) on $D_2$ as we go in the $-\lambda$ direction. 

%In the first case, we go from $P_a$ on $D_1$ to $P_a$ on $D_2$. From there, we continue onto $P_{a+2q}$ (mod $2p$) on $D_1$, and son. Because $P_b\in\gamma$, this must hit $b$ on $D_1$ (and thus $P_b$ on $D_2$) before it returns to $P_{a}$ on $D_1$. But once it hits $P_{b}$, it goes to $P_{b+1}$ and reverses the path taken in the sense that if it took $k$ twists along the meridian to get from $P_{a}$ to $P_{b}$, then it takes $-k$ twists along the meridian to return to $P_{a}$. Thus $\gamma$ is still homotopically trivial. The second case can be proved similarly, and so we conclude that the component $\gamma\in\Gamma'$ which contains $a$ is always homotopically trivial. 

We know, furthermore, that attaching an interior bypass corresponds to taking a particular subset of $M$. As such, attaching bypasses to a tight contact structure could not possibly create an overtwisted disk, and so the resulting contact structure must also be tight. However, the subset of $M$ that is created by attaching both the bypass at 1 and the bypass at 2 does \emph{not} have a tight neighborhood by Giroux's criterion in \Cref{thm: giroux's criterion}. After all, we already know that $\Gamma'$ has a homotopically trivial loop. It thus follows that the original contact structure could not have been tight at all. 

We therefore conclude that $\TT_{\vec x}(n,-p,q)=\emptyset$, as desired. 
\end{proof} 

An example of the contradiction that arises when attempting to attach \Cref{fig:adjacent-contradiction} implies that tight contact structures cannot admit adjacent bypasses. Note that adjacency here refers to the adjacency of the central components (of the dividing set), not to the adjacency of the central points (of the intersection of the dividing set with the boundary of the meridian disk). 

\begin{figure}[htbp]
\centering
\includegraphics[scale=0.8]{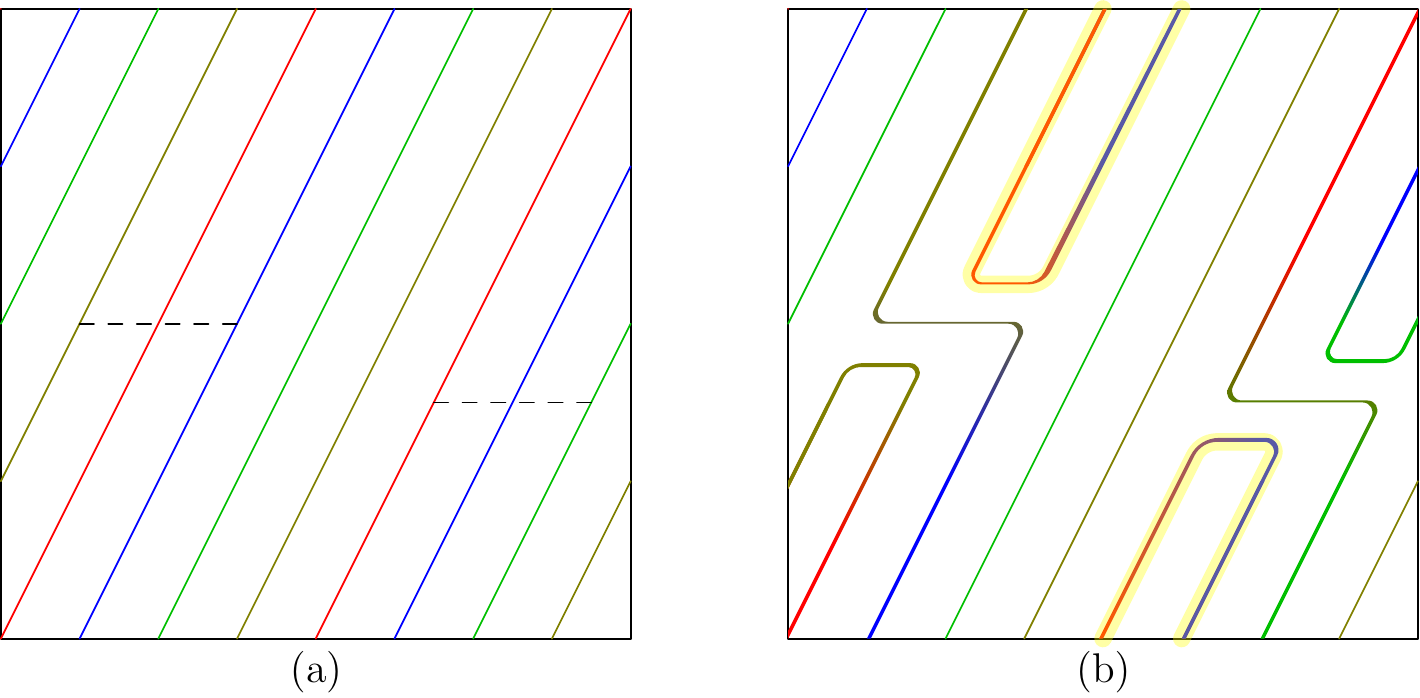}
\caption{The curve highlighted in yellow is an example of a homotopically trivial curve on the torus that is created upon the attachment of two boundary-parallel bypasses centered adjacent components on the dividing set $(2,-2,1)$.}
\label{fig:adjacent-contradiction}
\end{figure}

\begin{corollary}
Suppose $n>1$. For a tuple $\vec{x}\in\{0,1\}^{2n}$ so that $\norm{\vec{x}}>n$, we know
that
$$\mathcal{T}_{\vec{x}}(n,-p,q)=\emptyset.$$
\end{corollary}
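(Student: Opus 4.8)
The plan is to deduce this corollary directly from \Cref{lem: disallow adjacent bypasses} by a short pigeonhole argument, since that lemma already contains all of the contact-geometric content. \Cref{lem: disallow adjacent bypasses} tells us that $\mathcal{T}_{\vec{x}}(n,-p,q)=\emptyset$ as soon as $\vec{x}$ has two cyclically adjacent coordinates equal to $1$, i.e.\ as soon as there is some $i$ with $x_i=x_{i+1}=1$ (indices read modulo $2n$). So it suffices to show that the hypothesis $\norm{\vec{x}}>n$ forces such an adjacent pair to exist.

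To establish this combinatorial fact, I would partition the index set $\{1,\dots,2n\}$ into the $n$ disjoint adjacent pairs
$$\{1,2\},\ \{3,4\},\ \dots,\ \{2n-1,2n\}.$$
These $n$ pairs cover all $2n$ indices, and each pair consists of two cyclically adjacent positions. If no two adjacent coordinates of $\vec{x}$ were both $1$, then each of these pairs would contribute at most one $1$ to the sum $\norm{\vec{x}}$, giving $\norm{\vec{x}}\le n$. Since by hypothesis $\norm{\vec{x}}>n$, the pigeonhole principle produces a pair $\{2k-1,2k\}$ with $x_{2k-1}=x_{2k}=1$. Setting $i=2k-1$ then yields $x_i=x_{i+1}=1$, so \Cref{lem: disallow adjacent bypasses} applies and gives $\mathcal{T}_{\vec{x}}(n,-p,q)=\emptyset$, as desired.

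There is essentially no obstacle here: the geometric difficulty---that two bypasses centered on adjacent components create a homotopically trivial dividing curve, contradicting Giroux's criterion---is entirely absorbed into \Cref{lem: disallow adjacent bypasses}, and the remaining step is the elementary observation that the maximum number of $1$'s one can place on a cycle of length $2n$ with no two adjacent is exactly $n$. The only minor point to keep track of is the cyclic (rather than linear) indexing, which is why I phrase the pairing so that adjacency across the wrap-around from $2n$ back to $1$ never needs to be invoked; the matching $\{1,2\},\dots,\{2n-1,2n\}$ already saturates the bound within the linear order.
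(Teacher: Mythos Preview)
Your argument is correct and is precisely the intended deduction: the paper states this as an immediate corollary of \Cref{lem: disallow adjacent bypasses} without spelling out the pigeonhole step, and your pairing $\{1,2\},\{3,4\},\dots,\{2n-1,2n\}$ makes that step explicit.
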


Next, we deal with tuples $\vec{x}\in\{0,1\}^{2n}$ so that $\norm{\vec{x}}\leq n$. The case $\norm{\vec{x}}< n$ is relatively simple. It requires the following lemma, however. 

\begin{lemma} \label{lem: injective map}
Suppose $n>1$. Suppose $\xi$ is a tight contact structure on $M=S^1\times D^2$. Let $\Gamma=(n,-p,q)$ be the parametrization of the dividing set. For each $\alpha=0,1,\dots,2np-1$ such that, with the contact structure $\xi$, there is a bypass induced by a boundary-parallel dividing curve centered at $\alpha$, there is a corresponding injective map \[B_\alpha:\mathrm{Tight}(M,(n-1,-p,q))\to\mathrm{Tight}(M,(n,-p,q))\] obtained by attaching the interior bypass put upside down.  
\end{lemma}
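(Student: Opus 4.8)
The plan is to realize $B_\alpha$ as the abstract exterior bypass attachment dual to the given interior boundary-parallel bypass, and then to verify three things in turn: that the image has dividing set $(n,-p,q)$, that the map is injective, and that the image is tight.

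First I would fix the attaching arc. The boundary-parallel dividing curve centered at $\alpha$ extends, by \Cref{prop: bypass abundance}, to an interior bypass half-disk whose attaching arc $\delta$ meets $\Gamma=(n,-p,q)$ in three points; since $n>1$ these lie on three distinct dividing curves, so attaching this interior bypass reduces the number of dividing curves by two (case (2) of \Cref{prop: bypass attachment possibilities}), carrying $(n,-p,q)$ to $(n-1,-p,q)$. By \Cref{prop: reverse bypass}, turning the bypass upside down produces an exterior bypass whose attachment carries $(n-1,-p,q)$ back to $(n,-p,q)$; its attaching arc $\delta'$ and local model depend only on $\alpha$ and on the dividing set $(n-1,-p,q)$, not on the interior contact structure. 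I therefore define $B_\alpha(\xi')$, for any $\xi'\in\mathrm{Tight}(M,(n-1,-p,q))$, to be the result of attaching this exterior bypass along $\delta'$; by \Cref{prop: reverse bypass} and the bypass attachment lemma, $B_\alpha(\xi')$ has dividing set $(n,-p,q)$.

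For injectivity I would invoke the duality directly. Given $\xi'$ and $\eta=B_\alpha(\xi')$, \Cref{prop: reverse bypass} applied to the attachment $\xi'\mapsto\eta$ says that turning this exterior bypass upside down yields an interior bypass carrying $\eta$ back to $\xi'$; its attaching arc on the new boundary is determined by $\delta'$ together with the dividing set $(n,-p,q)$ of $\eta$ alone. Hence this reverse attachment defines a single map $R_\alpha$, well defined because the bypass attachment lemma produces a uniquely determined contact structure from a fixed attaching arc, and it satisfies $R_\alpha\circ B_\alpha=\id$. A map with a left inverse is injective, so $B_\alpha$ is injective.

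The step I expect to be the main obstacle is showing that the image is tight. This does not follow from the left-inverse relation: the interior reverse bypass only exhibits $\xi'$ as the contact structure on a sub-solid-torus of $(M,\eta)$, and a tight submanifold may sit inside an overtwisted manifold, so an exterior attachment can a priori create an overtwisted disk. My plan is to argue on the level of states. A tight $\xi'$ admits a convex meridian state $(D',\Gamma_{D'})$ with no closed dividing curves by \Cref{thm: giroux's criterion}; realizing the exterior bypass inside an enlarged meridian disk, as in \Cref{lem: bypasses also give rise to states}, endows $B_\alpha(\xi')$ with a state $(D,\Gamma_D)$ obtained from $\Gamma_{D'}$ by inserting the nested family of boundary-parallel (outermost) arcs dual to $\delta'$ near position $\alpha$. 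The crucial point to verify is that this insertion creates no homotopically trivial closed dividing curve; in contrast with the two-bypass configuration that forced an overtwisted disk in \Cref{lem: disallow adjacent bypasses}, here we reverse a single boundary-parallel bypass, so the inserted arcs remain outermost and the configuration stays closed-curve-free. \Cref{thm: giroux's criterion} then certifies that $B_\alpha(\xi')$ is tight. As an alternative I would consider the contact-element map $\Psi_{B_\alpha}$ of \Cref{thm: bypass induces maps on SHI}, which satisfies $\Psi_{B_\alpha}(\phi(\xi'))=\phi(B_\alpha(\xi'))$, and try to show it does not kill the nonzero class $\phi(\xi')$; but this would require extending the graded computations of \Cref{thm: SHI for sutured solid torus} beyond the case $n=1$, so I expect the state argument to be the cleaner route.
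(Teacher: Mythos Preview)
Your setup for $B_\alpha$ is fine, but both the injectivity step and the tightness step have genuine gaps, and the paper closes them with a single trick you did not find.

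For injectivity, the assertion that ``the bypass attachment lemma produces a uniquely determined contact structure from a fixed attaching arc'' is only valid for \emph{exterior} attachments, where one is gluing on a collar whose contact structure is standard and unique. An interior bypass removal, by contrast, peels off a collar containing a chosen bypass half-disk inside $(M,\eta)$, and the restriction of $\eta$ to the complement depends on which half-disk you chose; two non-isotopic bypass disks with the same attaching arc can yield non-isotopic complements. So $R_\alpha$ is not a priori a well-defined map, and in fact its well-definedness on $\im(B_\alpha)$ is exactly equivalent to the injectivity of $B_\alpha$, making your argument circular. For tightness, Giroux's criterion on a meridian disk only says the disk has a tight neighborhood; it does not rule out an overtwisted disk elsewhere in the solid torus, so the state argument as written does not conclude.

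The paper's proof handles both issues at once by producing a second \emph{exterior} bypass $\eta$, with attaching arc disjoint from that of $\beta$, chosen so that on $(n-1,-p,q)$ the arc $\eta$ is trivial, and after attaching $\eta$ the arc $\beta$ becomes trivial as well. By \Cref{prop: trivial bypasses are trivial} the composite $\beta\circ\eta$ is the identity on contact structures, and by \Cref{prop: bypasses commute} so is $\eta\circ\beta$. Because $\eta$ is exterior it is an honest well-defined map on isotopy classes, so $\eta\circ\beta=\id$ immediately gives injectivity of $\beta=B_\alpha$. And because $\eta$ is exterior, $(M,\beta(\xi'))$ embeds contactly into $(M,\eta(\beta(\xi')))\cong(M,\xi')$; an overtwisted disk in $\beta(\xi')$ would then sit inside the tight $\xi'$, a contradiction. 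This auxiliary-exterior-bypass trick is the missing idea.
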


\begin{proof}
We know that attaching this interior bypass $\alpha$ results in Case (2) of \Cref{prop: bypass attachment possibilities}. In particular, it takes $\xi$ on $(n,-p,q)$ to a tight contact structure on $(n-1,-p,q)$. Moreover, by \Cref{prop: reverse bypass}, there is a corresponding exterior bypass $\beta$. Since we can attach this exterior bypass to any tight contact structure on $(n-1,-p,q)$, it follows that this induces a map $B_\alpha$ from $\mathcal T(n-1,-p,q)$ to $\mathcal C(n,-p,q)$. 

It thus remains to show that $\im B_\alpha\subseteq\mathcal T(n-1,-p,q)$ and that $B_\alpha$ is injective. To do this, consider an arbitrary tight contact structure $\xi$ on $(n-1,-p,q)$. 

Note that if we let the horizontal direction correspond to $-p\lambda+q\mu$, then we have a total of $2n$ horizontal components of $(n,-p,q)$. Number them $\gamma_0$ through $\gamma_{2n-1}$, and name $k$ to be the point of intersection of $\gamma_k$ with the vertical edge. Moreover, suppose without loss of generality that $k=1$, so $\alpha$ is centered at $\gamma_1$. 

Then it is clear the $\gamma_0$, $\gamma_1$, and $\gamma_2$ are all joined up into some component $\gamma$. Then $\beta$ is a bypass, all of whose points of intersection with the dividing set are on $\gamma$. Moreover, we can choose a bypass attaching arc $\eta$ with two points of intersection with $\gamma$ and one point of intersection with $\gamma_3$, as shown in \Cref{fig:beta-bypass}. 

\begin{figure}[htbp] 
\centering
\includegraphics[width=0.7\textwidth]{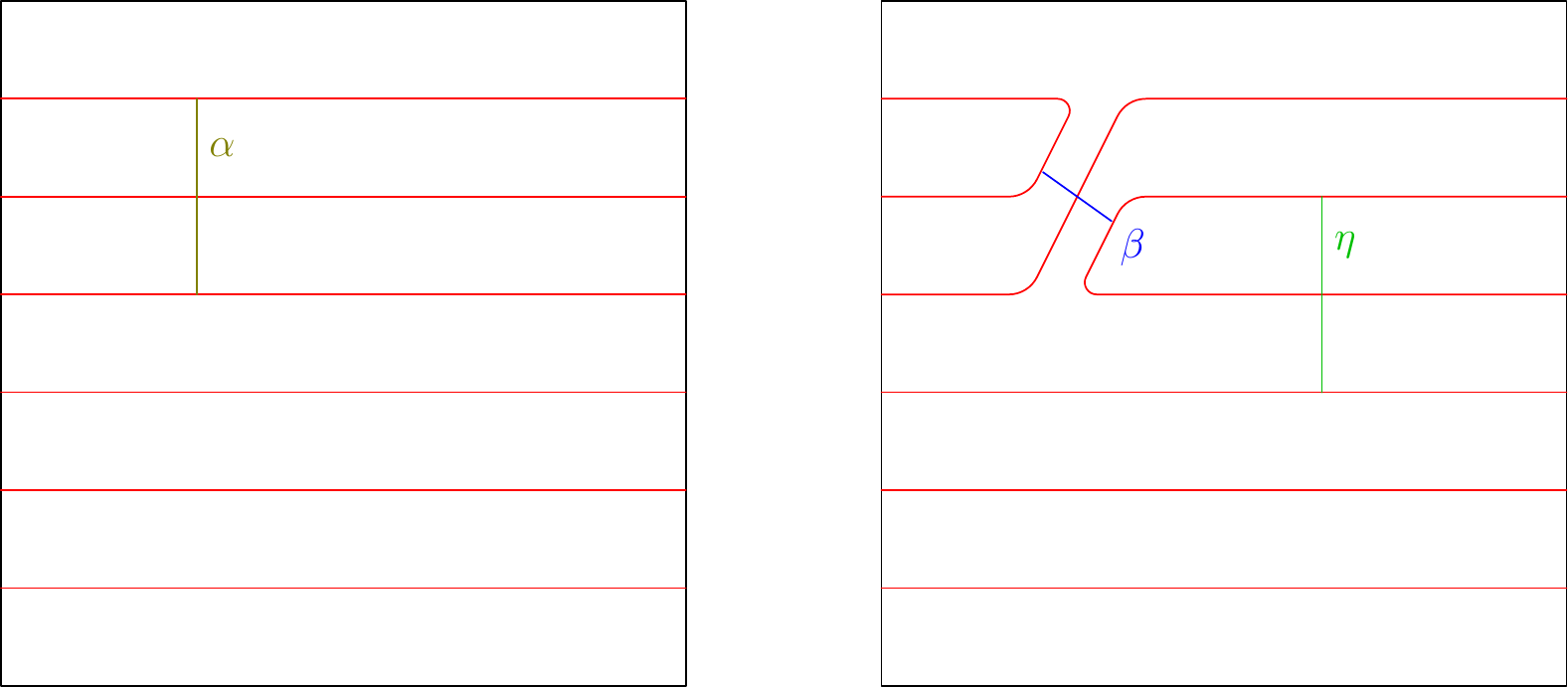}
\caption{We begin by (a) attaching $\alpha$ to the dividing set $(n,-p,q)$. Then we can (b) identify the bypasses $\beta$ and $\eta$ on $(n-1,-p,q)$. Note that in this diagram, the horizontal direction corresponds to $-p\lambda+q\mu$.}
\label{fig:beta-bypass}
\end{figure}

By \Cref{prop: bypasses commute} and the fact that $\beta$ and $\eta$ have disjoint attaching arcs, we have that $\beta\circ\eta=\eta\circ\beta$, which is seen in \Cref{fig:bypass-cd}. 

\begin{figure}[htbp] 
\centering
\includegraphics[width=0.7\textwidth]{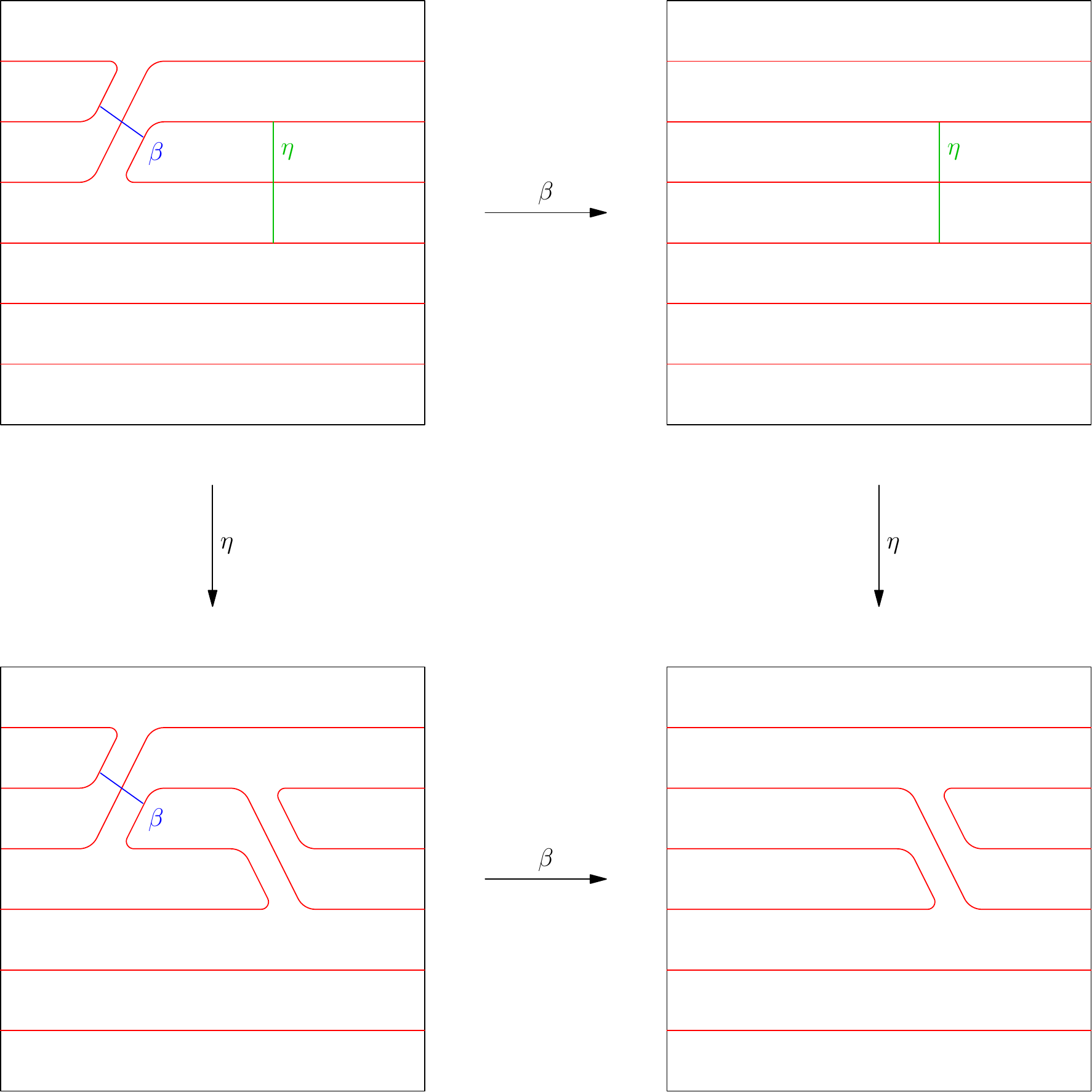}
\caption{With $\beta$ and $\eta$ exterior bypasses with attaching arcs as shown, the above diagram will always commute. Note that the bypasses in $\beta\circ\eta$ are both trivial.}
\label{fig:bypass-cd}
\end{figure}

But observe that $\eta$ is a trivial bypass on $(n-1,-p,q)$ if we attach it first, so by \Cref{prop: trivial bypasses are trivial}, first attaching $\eta$ does not change the dividing set. Then $\beta$ becomes a trivial bypass on $(n-1,-p,q)$ after attaching $\eta$. It follows that $\eta\circ\beta=\beta\circ\eta$ does not change the contact structure either and is in this way trivial. 

In other words, we know that the function $\eta\circ\beta:\mathcal T(n-1,-p,q)\to\mathcal C(n-1,-p,q)$ is injective and has $\im(\eta\circ\beta)=\mathcal T(n-1,-p,q)$. It then follows immediately that $\beta$ must be injective with $\im\beta\subseteq\mathcal T(n,-p,q)$. The latter conclusion follows once we recall that $\eta$ is an exterior bypass. In particular, if $\beta(\xi')$ is overtwisted, where $\xi'$ is a tight contact structure on $(n-1,-p,q)$, then $\xi'=\eta(\beta(\xi'))$, which would correspond to taking a neighborhood of $\beta(\xi')$, must contain this overtwisted disk as well, a contradiction. 
\end{proof}

This allows us to deal with the case $\norm{\vec x}=k<n$, which will be essential to make our inclusion-exclusion work. Note that we separate the $k<n$ case from the $k=n$ case because they require different arguments. In particular, the latter case will involve a slight subtlety arising from the fact that attaching the $n$-th bypass may result in some undetermined dividing sets. For the $k<n$ case, which we prove now, however, we can always decide precisely what the dividing set after attaching $k$ bypasses will be, which eliminates the difficulty found in the $k=n$ case.

\begin{prop} \label{prop: tight contact structures admitting k bypasses} 
Suppose $n>1$. For a tuple $\vec{x}\in\{0,1\}^{2n}$ such that $\norm{\vec{x}}=k<n$ and such that there does not exist an $i$ with $x_i=x_{i+1}=1$, we know
that
$$|\mathcal{T}_{\vec{x}}(n,-p,q)|=|\mathcal{T}(n-k,-p,q)|=N(n-k,-p,q).$$
\end{prop}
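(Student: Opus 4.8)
The plan is to produce a bijection between $\mathcal{T}(n-k,-p,q)$ and $\mathcal{T}_{\vec{x}}(n,-p,q)$ by attaching and removing the $k$ bypasses prescribed by $\vec{x}$. Write $i_1<\dots<i_k$ for the indices with $x_{i_j}=1$; the hypothesis that no $x_i=x_{i+1}=1$ means these indices are pairwise non-adjacent in the cyclic order on $\{\gamma_1,\dots,\gamma_{2n}\}$ (so that $\mathcal{T}_{\vec x}$ is not automatically empty by \Cref{lem: disallow adjacent bypasses}), and $k<n$ guarantees that at least one index is omitted in every cyclic gap. I would build the bijection from the two opposite bypass directions.

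For the backward direction, for each $j$ the interior boundary-parallel bypass centered on $\gamma_{i_j}$ has, by \Cref{prop: reverse bypass}, a dual exterior bypass whose attachment raises $\#\Gamma$ by two (the reverse of Case (2) of \Cref{prop: bypass attachment possibilities}), and \Cref{lem: injective map} shows the induced map on tight structures is injective. Since the $k$ attaching arcs sit near pairwise distinct, non-adjacent components, they are disjoint, so by \Cref{prop: bypasses commute} their composite $B\colon\mathcal{T}(n-k,-p,q)\to\mathcal{T}(n,-p,q)$ is independent of the order of attachment and, as a composition of the injections of \Cref{lem: injective map}, is itself injective.

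For the forward direction, given $\xi\in\mathcal{T}_{\vec{x}}(n,-p,q)$ there is, by \Cref{defn: states} together with \Cref{lem: states exists simultaneously}, a single state whose dividing set $\Gamma_D$ carries $k$ boundary-parallel curves, one centered on each $\gamma_{i_j}$. Any two such curves cut off \emph{disjoint} half-disks (each half-disk meets $\Gamma\cap\partial D$ in exactly one point, so neither can be nested inside the other), hence the corresponding interior bypasses can be attached simultaneously. Because attaching an interior bypass corresponds to passing to a submanifold, tightness is preserved, and I would check that each of the $k$ attachments meets three distinct dividing curves and so falls under Case (2), reducing $\#\Gamma$ by two each time; this produces $\Phi(\xi)\in\mathcal{T}(n-k,-p,q)$. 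By \Cref{prop: reverse bypass} the interior and exterior attachments at each $\gamma_{i_j}$ are mutually inverse, so $B\circ\Phi=\mathrm{id}$ and $\Phi\circ B=\mathrm{id}$. To finish I must identify $\mathrm{im}\,B$ with $\mathcal{T}_{\vec{x}}(n,-p,q)$: the identity $\xi=B(\Phi(\xi))$ gives $\mathcal{T}_{\vec{x}}\subseteq\mathrm{im}\,B$, while each curve created by $B$ is boundary-parallel and centered on the correct component, so \Cref{lem: bypasses also give rise to states} shows every $B(\xi_0)$ admits a state with bypasses on exactly the required $\gamma_{i_j}$, giving $\mathrm{im}\,B\subseteq\mathcal{T}_{\vec{x}}$. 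Hence $|\mathcal{T}_{\vec{x}}(n,-p,q)|=|\mathcal{T}(n-k,-p,q)|=N(n-k,-p,q)$.

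I expect the crux to be the Case-(2) claim in the forward step: verifying that, with $k<n$ and no adjacent indices, each of the $k$ interior bypasses still meets three \emph{distinct} dividing curves even after earlier attachments have merged runs of consecutive components, so that the final dividing set is unambiguously $(n-k,-p,q)$. The leftover gap guaranteed by $k<n$ is precisely what prevents the merging from wrapping around and forcing a Dehn-twist (Case (4)) outcome; this is exactly the phenomenon that breaks down when $\norm{\vec x}=n$ and is what makes that case require separate treatment.
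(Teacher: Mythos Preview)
Your proposal is correct and follows essentially the same strategy as the paper: construct a bijection between $\mathcal{T}(n-k,-p,q)$ and $\mathcal{T}_{\vec{x}}(n,-p,q)$ by attaching and detaching the $k$ prescribed bypasses, using \Cref{lem: injective map} for injectivity of the exterior-bypass direction and checking that each interior attachment stays in Case~(2) of \Cref{prop: bypass attachment possibilities}. The paper carries out that Case-(2) verification explicitly for two bypasses---showing that after $\alpha$ merges $\gamma_{a-1},\gamma_a,\gamma_{a+1}$ into a single curve, the attaching arc of $\beta$ still meets three distinct components even in the overlap case $\gamma_{b\pm1}=\gamma_{a\mp1}$---and then inducts, whereas you correctly isolate this as the crux and supply the right intuition (the gap guaranteed by $k<n$ prevents wrap-around) without writing out the check. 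Your framing with the two named maps $B$ and $\Phi$ and the explicit appeal to \Cref{lem: bypasses also give rise to states} for $\operatorname{im}B\subseteq\mathcal{T}_{\vec{x}}$ is slightly more detailed than the paper's one-line conclusion, but the content is the same.
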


\begin{proof}
%Evidently, if $n=1$, then $k=0$ and so the proposition follows in this case because we trivially have that $|\TT_{\vec0}(n,-p,q)|=|\TT(n,-p,q)|$. Thus we assume that $n>1$. 
It suffices to show that if we have multiple bypasses that are each induced by a boundary-parallel curve on the states of a contact structure with dividing set $(n,-p,q)$, then after applying Case (2) of \Cref{prop: bypass attachment possibilities} to one of them, we can apply it to the new contact manifold to another of them. That is to say, we would like to show that when we attach a bypass induced by a boundary-parallel curve, each of the other bypass disks is boundary-parallel on the new dividing set. 

First, we will show that we can attach all $k$ bypasses and will arrive at a contact structure with dividing set $(n-k,-p,q)$. Note that if we can prove this in the case $k=2$, then induction will prove the general case. Thus suppose that $x_m=x_{m'}=1$ for some nonadjacent $m$ and $m'$. In particular, suppose that we can attach the bypasses $\alpha$ and $\beta$ at the points $a\equiv m\pmod{2n}$ and $b\equiv m'\pmod{2n}$, respectively. 

We already know that attaching $\alpha$ will take us to $(n-1,-p,q)$. Moreover, its only effect is to connect the components $\gamma_{a-1}$, $\gamma_a$, and $\gamma_{a+1}$ into one component of $\Gamma_{\partial M}=(n-1,-p,q)$, which we can call $\gamma'_a$. The bypass centered at $b$ has attaching arc intersecting $\gamma_{b-1}$, $\gamma_b$, and $\gamma_{b+1}$. 

By the nonadjacency condition, the only possible overlap with the components affected by $\alpha$ is if $\gamma_{b+1}=\gamma_{a-1}$ (or, symmetrically, if $\gamma_{b-1}=\gamma_{a+1}$). 

Note that if there is no overlap between the two sets of components, then the attaching arc of the bypass $\beta$ is unaffected. Even if there is overlap, however, we know that $\beta$ is attached at an arc intersecting $\gamma_{b-1}$, $\gamma_b$, and $\gamma_{a+1}=\gamma'_a$. Since $\gamma'_a$ is still adjacent to $\gamma_b$, this attaching arc still corresponds to a boundary-parallel curve. This means that attaching $b$ will take us to a contact structure whose dividing set is $(n-2,-p,q)$. Thus, in general, we can say that attaching $k<n$ bypasses centered on nonadjacent components of $\Gamma=(n,-p,q)$ will take us to $(n-k,-p,q)$.

But \Cref{lem: injective map} implies that we can then reverse the bypasses. This gives us an injective map $\TT(n-k,-p,q)\hookrightarrow\TT(n,-p,q)$ whose image is exactly those elements in $\TT_{\vec x}(n,-p,q)$. The proposition follows. 
\end{proof}

Finally, we deal with the remaining case when $\norm{\vec{x}}=n$. Recall our definitions in \Cref{subsec: floer theory} of $p_m$ and $q_m$. Moreover, if $(p,q)\neq (1,1)$, then let $-\frac pq=[r_0,r_1,\dots,r_k]$ and we define 
$$s=|(r_0+1)(r_1+1)\dots(r_k+1)|.$$
Then we have the following.

\begin{prop} \label{prop: tight contact structures admitting n bypasses}
	Suppose $\vec{x}\in\{0,1\}^{2n}$ is such that $\norm{\vec{x}}=n$.
	
	(1) If $(p,q)=(1,1)$, then we know that 
	$$|\mathcal{T}_{\vec{x}}(n,-p,q)|=1.$$
	
	(2) If $(p,q)\neq (1,1)$, then we know that 
	$$|\mathcal{T}_{\vec{x}}(n,-p,q)|=|\mathcal{T}_{\vec{x}}(1,-p_1,q_1)|=s.$$
\end{prop}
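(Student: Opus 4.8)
The plan is to first fix the combinatorial shape of $\vec x$ and then split into the two cases, the second of which carries the real content. Since $\|\vec x\|=n$, \Cref{lem: disallow adjacent bypasses} forbids any two marked components from being adjacent, and the only way to select $n$ pairwise nonadjacent components from the $2n$ cyclically ordered ones is to take every other component. After relabeling I may therefore assume that the marked components are $\gamma_1,\gamma_3,\dots,\gamma_{2n-1}$.

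For case (1), where $(p,q)=(1,1)$, we have $p=1$, so a state is simply a non-crossing matching of the $2n$ points $P_1,\dots,P_{2n}$ on $\partial D$ with $P_i\in\gamma_i$, and admitting a bypass on $\gamma_i$ means the matching has a boundary-parallel arc cutting off exactly $P_i$. I would check that there is a unique matching whose outermost arcs cut off all of $P_1,P_3,\dots,P_{2n-1}$ simultaneously, namely the one joining the two points of $\Gamma_D\cap\partial D$ flanking each odd $P_i$; hence any state witnessing membership in $\mathcal T_{\vec x}(n,-1,1)$ must be this one. As each of the $C_n$ states of an $(n,-1,1)$-structure determines its tight contact structure (Honda, cf. \Cref{thm: n10 case}) and this matching is realized, we get $|\mathcal T_{\vec x}(n,-1,1)|=1$.

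For case (2), where $(p,q)\neq(1,1)$, I would first reduce to $n=1$. Attaching the $n-1$ bypasses on $\gamma_3,\dots,\gamma_{2n-1}$ invokes Case (2) of \Cref{prop: bypass attachment possibilities} repeatedly exactly as in \Cref{prop: tight contact structures admitting k bypasses}, carrying $(n,-p,q)$ to $(1,-p,q)$; the reversibility from \Cref{lem: injective map} turns this into a bijection between $\mathcal T_{\vec x}(n,-p,q)$ and the tight structures on $(1,-p,q)$ that still admit the surviving bypass inherited from $\gamma_1$. This surviving bypass can no longer reduce the number of components, so I would instead identify it with (the reverse of) one of the two bypasses $\beta_\pm$ of \Cref{thm: there are two bypasses} in the case $m=1$, which connect $\Gamma_1=(1,-p_1,q_1)$ to $\Gamma=(1,-p,q)$. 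By \Cref{prop: reverse bypass}, attaching the relevant $\beta$ matches the tight structures on $(1,-p_1,q_1)$ with those on $(1,-p,q)$ admitting this bypass, and I would make the matching exact using Floer theory: \Cref{thm: graded bypass exact triangle} with $m=1$ gives that $\Psi_{\beta}$ is an injective graded map, which together with \Cref{thm: contact elements} and the fact from \Cref{thm: SHI for sutured solid torus} that tight structures have nonzero contact elements lying in pairwise distinct gradings shows that $\beta$ is injective on tight structures with image exactly the set admitting the bypass. Thus $\mathcal T_{\vec x}(n,-p,q)$ is in bijection with the full set $\mathcal T(1,-p_1,q_1)$, so its size is $N(1,-p_1,q_1)$, which Honda's formula (\Cref{thm: 1pq case}) applied to $-\tfrac{p_1}{q_1}=[r_0,\dots,r_k,-1]=[r_0,\dots,r_{k-1},r_k+1]$ evaluates to $|(r_0+1)\cdots(r_{k-1}+1)(r_k+1)|=s$.

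The step I expect to be the main obstacle is exactly the ``undetermined dividing set'' issue noted before the statement. After the $n-1$ reductions, the combinatorics of \Cref{prop: bypass attachment possibilities} does not by itself determine the effect of the last bypass on $(1,-p,q)$, since it could in principle increase the number of components or act by a Dehn twist; pinning it down as one of the $\beta_\pm$ of \Cref{thm: there are two bypasses} and verifying that attaching $\beta$ is a genuine bijection onto all of $\mathcal T(1,-p_1,q_1)$ --- neither omitting nor double-counting any tight structure --- is what forces us to go through the graded injectivity of $\Psi_{\beta}$ and the distinctness of the gradings of contact elements rather than arguing purely topologically.
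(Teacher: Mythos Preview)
Your treatment of Case~(1) matches the paper's: for fixed $\vec{x}$ there is a unique non-crossing matching with boundary-parallel arcs exactly at the prescribed parity, and Honda's bijection between states and tight structures on $(n,-1,1)$ finishes it.

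In Case~(2) your reduction by $n-1$ bypasses to $(1,-p,q)$ is the same as the paper's, and you correctly flag the ``undetermined dividing set'' as the crux. The gap is in how you resolve it. You assert that the surviving bypass is (the reverse of) $\beta_\pm$ for $m=1$ and then only invoke the $m=1$ clause of \Cref{thm: graded bypass exact triangle} to get injectivity of $\Psi_{\beta}$. But the paper does \emph{not} pin down a single $m$: it shows that peeling $\alpha_n$ from $(M',\Gamma')$ lands in $(1,-p_m,q_m)$ for some a~priori unknown $m\in\mathbb Z$, and then proves three Floer-theoretic claims establishing that
\[
\operatorname{im}(\beta_{n,m})\cap\mathcal T(1,-p,q)=\operatorname{im}(\beta_{n,1})\cap\mathcal T(1,-p,q)
\]
for every $m\ge 1$, and that this intersection is empty for $m\le 0$. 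Only after collapsing all $m$ to $m=1$ does the bijection with $\mathcal T(1,-p_1,q_1)$ follow.

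Your argument that ``$\Psi_\beta$ injective, contact elements nonzero and in distinct gradings, hence $\beta$ is a bijection onto the set admitting the bypass'' establishes exactly that $\beta_1$ injects $\mathcal T(1,-p_1,q_1)$ into $\mathcal T(1,-p,q)$ with image the structures admitting \emph{that particular} bypass arc. It does not show this image coincides with the set $S=F^{-1}(\mathcal T_{\vec x})$, because a tight structure in $S$ might admit the surviving bypass along an arc whose peel lands in $(1,-p_m,q_m)$ for $m\ne 1$. Ruling out $m\le 0$ (Claim~1) and showing the images for $m\ge 2$ agree with $m=1$ (Claims~2 and~3) both require further graded analysis of the bypass exact triangles---in particular the grading-support argument in Claim~3 uses Honda's description of $\mathcal T(1,-p_2,q_2)$ in terms of $\eta_{n,1,\pm}$, which goes beyond injectivity of a single $\Psi_{\beta}$. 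Without this, the count could in principle be $|\mathcal T(1,-p_m,q_m)|=m\cdot s$ rather than $s$.
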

\begin{proof}
For Case (1), according to Honda \cite{honda2000classification2}, we know that for the dividing set $\Gamma=(n,-1,1)$, the set of isotopy classes of tight contact structures is in one-to-one correspondence to the set of all possible states. Hence there is a unique state associated to each tight contact structure. Note that
$$|\partial D\cap \Gamma_D|=|\partial D\cap \Gamma|=2n,$$
so if the dividing set of a state consists of $n$ boundary-parallel components, then there are only two possibilities, as drawn in \Cref{fig:n-bound-parallel}, proving this case. 

\begin{figure}[htbp] 
\centering
\includegraphics[width=0.6\textwidth]{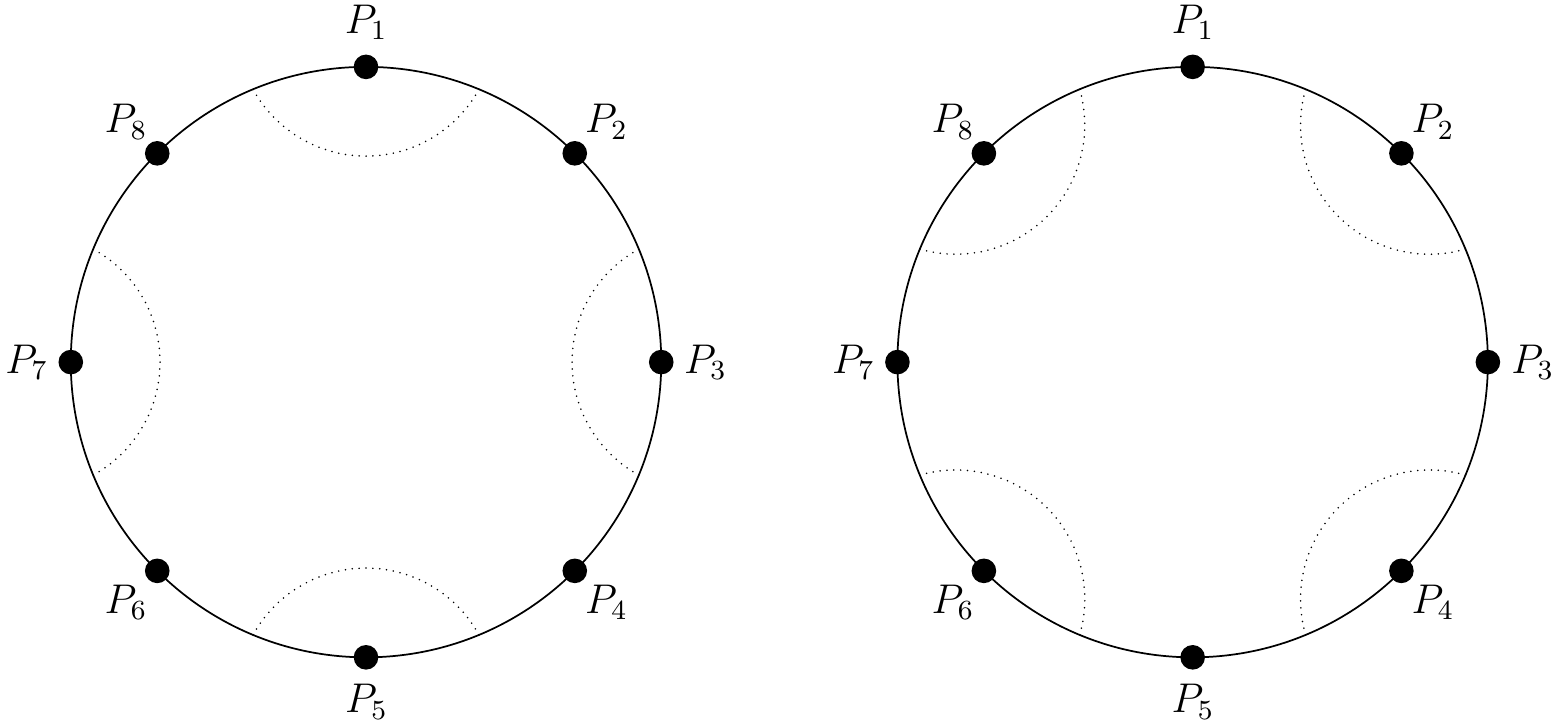} 
\caption{There are only two ways to have $n$ boundary-parallel components when $\Gamma=(n,-1,1)$.}
\label{fig:n-bound-parallel}
\end{figure}

For Case (2), suppose $\xi\in\mathcal{T}_{\vec{x}}(n,-p,q)$ for some tuple $\vec{x}$ satisfying the hypothesis of the proposition. Then $\xi$ admits $n$ different bypasses $\alpha_1,\dots,\alpha_n$. The first $(n-1)$ bypasses will alter the contact structure as in \Cref{prop: tight contact structures admitting k bypasses}: After peeling of layers of the boundary containing $\alpha_1,\dots,\alpha_{n-1}$, we obtain a triple $(M',\Gamma',\xi')$, where $M'$ is still a solid torus, $\Gamma'=(1,-p,q)$, and $\xi'=\xi|_{M'}$ is a tight contact structure on $(M',\Gamma')$. Let $\beta_1,\dots,\beta_n$ be the bypasses obtained by putting $\alpha_1,\dots,\alpha_n$ upside down, respectively, as in \Cref{prop: reverse bypass}, then we know that $\xi$ is obtained from $\xi'$ by attaching all $\beta_{1},\dots,\beta_{n-1}$. Thus we need only to understand the last bypass $\alpha_n$, viewed as a bypass in $(M',\Gamma',\xi')$. The way $\alpha_n$ changes the dividing set $\Gamma'$ is described by term (4) in \Cref{prop: bypass attachment possibilities}, though, a priori, we do not know along which curve to perform the Dehn twist. 
	
According to Honda \cite{honda2000classification}, all possibilities can be described as follows: If we peel of a layer of $\partial M'$ containing the bypass $\alpha_n$, then we obtain a triple $(M'',\Gamma'',\xi'')$, where $M''$ is still a solid torus, $\Gamma''=(1,-p_m,q_m)$ for some $m\in\mathbb{Z}$ (not necessarily positive), and $\xi''=\xi'|_{M''}$. To specify the different possibilities, we write $(M''_m,\Gamma''_m,\xi''_m)$ and let the corresponding bypass be $\alpha_{n,m}$ and $\beta_{n,m}$. Recall, as in the proof of \Cref{lem: injective map}, we use the notations $\beta_{n,m}$ for bypasses to also denote the maps between set of contact structures:
$$\beta_{n,m}:\mathcal{T}(1,-p_m,q_m)\rightarrow \mathcal{C}(1,-p,q).$$
Note by \Cref{thm: there are two bypasses}, there are two possible candidate for the bypasses $\beta_{n,m}$ for all $m$, though, as explained in \Cref{rem: distinguishing the two bypasses}, they can be distinguished by looking at the component of the dividing set $\Gamma''_m$ on which the end points of the bypass arc $\beta_{n,m}$ lie, and hence is determined by the component of the dividing set $\Gamma'$ on which the end point of the bypass arc $\alpha_n$ lie. This latter datum is further encoded in the tuple $\vec{x}\in\{0,1\}^{2n}$. Hence, for a fixed tuple $\vec{x}\in\{0,1\}^{2n}$, whether $\beta_{n,m}$ is a positive bypass or a negative bypass has been predetermined. Since the argument for the two cases are similar, we only work with the case when $\beta_{n,m}$ is a negative bypass for all $m\in\mathbb{Z}$: $\beta_{n,m}=\beta_{n,m,-}$.

\begin{claim} \label{claim 1}
We have ${\rm im}(\beta_{n,m})\cap \mathcal{T}(1,-p,q)=\emptyset$ for $m\leq0$.
\end{claim} 

\begin{proof}[Proof of claim] 
Indeed, from \Cref{thm: graded bypass exact triangle}, we know that the map $\Psi_{\beta_{n,m}}=0$. Then \Cref{claim 1} follows from \Cref{thm: SHI for sutured solid torus} and \Cref{thm: bypass induces maps on SHI}.
\end{proof}

\begin{claim} \label{claim 2}
We have ${\rm im}(\beta_{n,m})\cap \mathcal{T}(1,-p,q)={\rm im}(\beta_{n,m-1})\cap \mathcal{T}(1,-p,q)$ if $m\geq 3$.
\end{claim}

\begin{proof}[Proof of claim] 
Recall we have assumed that $\beta_{n,m}$ is a negative bypass. Let $\beta_{n,m,+}$ be the corresponding positive bypass which also change the dividing set from $\Gamma_m''$ to $\Gamma'$. As explained in \Cref{subsec: floer theory}, $\beta_{n,m,+}$ is involved in a bypass triangle where the other two bypasses are denoted by $\theta_{n,m,+}$ and $\eta_{n,m,+}$. From \Cref{thm: graded bypass exact triangle}, we know that the map
$$\Psi_{\beta_{n,m-1}}:SHI(-M,-\Gamma_{m-1}'')\rightarrow SHI(-M,-\Gamma')$$
is zero for gradings $i\geq p$ and is an isomorphism for gradings $1\leq i\leq p$, the map
$$\Psi_{\beta_{n,m}}:SHI(-M,-\Gamma_{m}'')\rightarrow SHI(-M,-\Gamma')$$
is zero for gradings $i\geq p$ and is an isomorphism for gradings $1\leq i\leq p$, and the map
$$\Psi_{\eta_{n,m-1,+}}:SHI(-M,-\Gamma_{m-1}'')\rightarrow SHI(-M,-\Gamma''_m)$$
is an isomorphism for gradings $1\leq i\leq p$. Note $\Gamma'=(1,-p,q)$ so by \Cref{thm: SHI for sutured solid torus}, $1,\dots,p$ are all the nonzero gradings of $SHI(-M,-\Gamma')$. Also, since $m\geq 3$, we know that $p_m>p$ and $p_{m-1}>p$. Hence \Cref{claim 2} follows from \Cref{thm: SHI for sutured solid torus} and \Cref{thm: bypass induces maps on SHI}.
\end{proof} 

\begin{claim} \label{claim 3}
We have ${\rm im}(\beta_{n,2})\cap \mathcal{T}(1,-p,q)={\rm im}(\beta_{n,1})\cap \mathcal{T}(1,-p,q)$.
\end{claim}

\begin{proof}[Proof of claim] 
In this case, as in the discussion for \Cref{claim 2}, we know that the map
$$\Psi_{\beta_{n,1}}:SHI(-M,-\Gamma_{1}'')\rightarrow SHI(-M,-\Gamma')$$
is an isomorphism for gradings $1\leq i\leq p_1$, the map
$$\Psi_{\beta_{n,2}}:SHI(-M,-\Gamma_{2}'')\rightarrow SHI(-M,-\Gamma')$$
is zero for gradings $i\geq p$ and is an isomorphism for gradings $1\leq i\leq p$, and the map
$$\Psi_{\eta_{n,1,+}}:SHI(-M,-\Gamma_{1}'')\rightarrow SHI(-M,-\Gamma''_2)$$
is an isomorphism for gradings $1\leq i\leq p_1$. This time we cannot directly conclude \Cref{claim 3}, since $p_1<p$. Thus it remains to show that there is no tight contact structures $\xi$ on $(M,\Gamma''_2)$ whose contact elements lie in gradings $p_1<i\leq p$.

To prove this last statement, note that Honda in \cite{honda2000classification} classified all tight contact structures on $(M,\Gamma''_1)$ and $(M,\Gamma_2'')$. The proof of his classification theorem states that
$$\mathcal{T}(1,-p_2,q_2)=\eta_{n,1,+}(\mathcal{T}(1,-p_1,q_1))\cup\eta_{n,1,-}(\mathcal{T}(1,-p_1,q_1)).$$
Here $\eta_{n,1,\pm}$ are the bypasses involved in the bypass triangles associated to $\beta_{n,1,\pm}$, as discussed in \Cref{subsec: floer theory}. Note also we have assumed that $\beta_{n,1}=\beta_{n,1,-}$. From \Cref{thm: graded bypass exact triangle}, we know that ${\rm im}(\Psi_{\beta_{n,1,-}})$ is supported in grading $p<i\leq p_2$, and ${\rm im}(\Psi_{\beta_{n,1,+}})$ is supported in grading $0<i\leq p_1$. Thus there is no tight contact structure $\xi$ on $(M,\Gamma''_2)$ whose contact element lies in gradings $p_1<i\leq p$, proving \Cref{claim 3}.
\end{proof}

From \Cref{claim 1,claim 2,claim 3} and \Cref{thm: SHI for sutured solid torus,thm: bypass induces maps on SHI,thm: graded bypass exact triangle}, we know that there is a bijection between ${\rm im}(\beta_n)\cap \mathcal{T}(1,-p,q)$ and $\mathcal{T}(1,-p_1,q_1)$. Recall that if we put the bypasses $\alpha_1,\dots,\alpha_{n-1}$ upside down, we obtain bypasses $\beta_1,\dots,\beta_{n-1}$. From the proof of \Cref{lem: injective map}, we know that 
$$\beta_{1}\circ\dots\beta_{n-1}:\mathcal{T}(1,-p,q)\rightarrow\mathcal{C}(n,-p,q)$$
is injective and has image contained in $\mathcal{T}(n,-p,q)$. Thus, we conclude that
$$\mathcal{T}_{\vec{x}}(n,-p,q)=\beta_{1}\circ\dots\beta_{n-1}({\rm im}(\beta_n)\cap \mathcal{T}(1,-p,q)),$$
and hence
$$|\mathcal{T}_{\vec{x}}(n,-p,q)|=|\mathcal{T}(1,-p_1,q_1)|=s.$$
This concludes the proof of \Cref{prop: tight contact structures admitting n bypasses}.
\end{proof}

\section{Proof of main theorem} \label{sec: proof} 
With this parametrization of the contact structures, we can prove the main theorem, which classifies all tight contact structures on the solid torus based on their dividing sets on $\partial M$. Our first step is to use the method of bypasses to reduce $N(n,-p,q)$, the number of tight contact structures with dividing set $(n,-p,q)$, to previous cases. We refer to this technique as that of ``bypass induction.'' Note that the base case in this bypass induction, namely when $n=1$, is already solved in \Cref{thm: 1pq case}. 

\begin{lemma} \label{lem: N(npq) recurrence} 
For every positive integer $n$, we have \begin{equation}\label{eq: N(npq) recurrence}N(n,-p,q)=\sum_{k=1}^n(-1)^{k+1}\left[\binom{2n-k}k+\binom{2n-k-1}{k-1}\right]N(n-k,-p,q),\end{equation} where $N(0,-p,q)$ is shorthand for $s=|(r_0+1)(r_1+1)\dots(r_k+1)|$. 
\end{lemma}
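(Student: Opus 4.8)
The plan is to compute $N(n,-p,q)=|\mathcal{T}(n,-p,q)|$ by inclusion--exclusion over the boundary-parallel bypasses that a tight structure can carry. Throughout I work with $n\ge 2$, since the vanishing statements \Cref{lem: disallow adjacent bypasses} and the corollary following it require $n>1$, while the case $n=1$ is the base of the induction and is supplied by \Cref{thm: 1pq case}. For each $i\in\{1,\dots,2n\}$ let $\vec{e}_i\in\{0,1\}^{2n}$ be the tuple with a single $1$ in position $i$. The first step is to observe that
\[\mathcal{T}(n,-p,q)=\bigcup_{i=1}^{2n}\mathcal{T}_{\vec{e}_i}(n,-p,q).\]
Indeed, any tight $\xi$ admits a convex meridian state $(D,\Gamma_D)$, whose dividing set is a union of arcs with no closed components; an outermost arc cuts off a boundary-parallel half-disk, and since $t(\partial D)=-np\le -2$ by \Cref{thm: twisting number relative to framing}, this half-disk extends to a genuine bypass by \Cref{prop: bypass abundance}. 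Hence $\xi$ admits a boundary-parallel bypass on some $\gamma_i$, so it lies in the union.

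Next I would apply inclusion--exclusion to this union of $2n$ sets. For a nonempty $S\subseteq\{1,\dots,2n\}$ write $\vec{x}_S$ for its indicator tuple; by the intersection identity \eqref{eq: inclusion-exclusion 2} (which in turn rests on \Cref{lem: states exists simultaneously}) one has $\bigcap_{i\in S}\mathcal{T}_{\vec{e}_i}(n,-p,q)=\mathcal{T}_{\vec{x}_S}(n,-p,q)$. I would then split on the combinatorial type of $S$. If $S$ contains two cyclically adjacent indices, \Cref{lem: disallow adjacent bypasses} forces $\mathcal{T}_{\vec{x}_S}=\emptyset$, and the same holds whenever $|S|>n$, since then $S$ necessarily contains an adjacent pair. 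For the surviving terms $S$ is an independent set in the cycle $C_{2n}$ with $|S|=k\le n$, and \Cref{prop: tight contact structures admitting k bypasses} (for $k<n$) together with \Cref{prop: tight contact structures admitting n bypasses} (for $k=n$, using the convention $N(0,-p,q)=s$) gives $|\mathcal{T}_{\vec{x}_S}(n,-p,q)|=N(n-k,-p,q)$ uniformly. Grouping the inclusion--exclusion sum by $k=|S|$ then yields
\[N(n,-p,q)=\sum_{k=1}^{n}(-1)^{k+1}\,a_{n,k}\,N(n-k,-p,q),\]
where $a_{n,k}$ denotes the number of $k$-element independent sets in $C_{2n}$.

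It then remains to identify $a_{n,k}$ with the bracketed coefficient in \eqref{eq: N(npq) recurrence}. I would count by fixing a vertex $v$ of $C_{2n}$ and using the standard fact that the number of ways to choose $j$ pairwise non-adjacent positions among $N$ collinear ones is $\binom{N-j+1}{j}$. If $v\notin S$, then deleting $v$ leaves a path on $2n-1$ vertices on which $k$ non-adjacent indices must lie, contributing $\binom{2n-k}{k}$; if $v\in S$, its two neighbors are forbidden, and the remaining $k-1$ indices lie on the path of $2n-3$ vertices, contributing $\binom{2n-k-1}{k-1}$. Summing gives $a_{n,k}=\binom{2n-k}{k}+\binom{2n-k-1}{k-1}$, which is exactly the coefficient appearing in the recurrence.

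This combinatorial count is routine and is not where the difficulty lies; the substance is concentrated in the two inputs already established. The first is that the pairwise intersections collapse to a single simultaneous state, i.e.\ \eqref{eq: inclusion-exclusion 2}, which depends on \Cref{lem: states exists simultaneously} and the commutativity of disjoint bypass attachments. The second, and genuinely hardest, is the $k=n$ evaluation $|\mathcal{T}_{\vec{x}_S}|=s$ of \Cref{prop: tight contact structures admitting n bypasses}, whose proof is the Floer-theoretic core of the paper; the clean way the boundary value $N(0,-p,q):=s$ slots into the recurrence hinges entirely on that proposition, and it is the reason the $k=n$ case must be separated from the $k<n$ case.
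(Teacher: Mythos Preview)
Your proof is correct and follows the same inclusion--exclusion strategy as the paper. You are in fact slightly more careful than the paper on two points: you explicitly justify the covering $\mathcal{T}(n,-p,q)=\bigcup_i\mathcal{T}_{\vec{e}_i}(n,-p,q)$ via \Cref{prop: bypass abundance} (the paper's \eqref{eq: inclusion-exclusion 1} includes $\vec{x}=\vec{0}$ and is therefore trivially true but does not by itself set up the inclusion--exclusion), and you correctly flag that the argument requires $n\ge 2$, with $n=1$ serving as a base case; the paper instead cites Kaplansky for the coefficient you derive by hand.
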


\begin{proof}
Notice that because of \Cref{eq: inclusion-exclusion 1,eq: inclusion-exclusion 2}, which state that \[\TT(n,-p,q)=\bigcup_{\vec x\in\{0,1\}^{2n}}\TT_{\vec x}(n,-p,q)\] and that \[\TT_{\vec x^1}(n,-p,q)\cap\TT_{\vec x^2}(n,-p,q)=\TT_{\vec x^1\cap\vec x^2}(n,-p,q),\] we can use the principle of inclusion-exclusion. In particular, we have that \[|\TT(n,-p,q)|=\sum_{k=1}^n\left(\sum_{\lVert\vec x\rVert=k}(-1)^{k+1}|\TT_{\vec x}(n,-p,q)|\right).\]

This, along with \Cref{lem: disallow adjacent bypasses} and \Cref{prop: tight contact structures admitting k bypasses,prop: tight contact structures admitting n bypasses}, shows that we must have the recurrence \[N(n,-p,q)=\sum_{k=1}^n(-1)^{k+1}a_{k,n}N(n-k,-p,q),\] where $a_{k,n}$ is the number of tuples $\vec x\in\{0,1\}^{2n}$ without adjacent 1's (where $x_1$ and $x_{2n}$ are considered to be adjacent). But Kaplansky \cite{kaplansky1943nonadj} showed that \[a_{k,n}=\binom{2n-k}k\cdot\frac n{n-k}=\binom{2n-k}k+\binom{2n-k-1}{k-1}.\] Thus \Cref{eq: N(npq) recurrence} holds. 
\end{proof} 

With \Cref{lem: N(npq) recurrence}, we now have a recurrence for $N(n,-p,q)$ in terms of $N(n-k,-p,q)$ for $k=1,\dots,n$. To complete the proof of the theorem, it suffices to find a closed form for the recurrence obtained in \Cref{lem: N(npq) recurrence}. To do this, it will be helpful to first obtain several recurrences involving the Catalan numbers. 

\begin{lemma} \label{lem: C_n recurrence} 
For every positive integer $n$, we have \begin{equation*}C_n=\sum_{k=1}^n(-1)^{k+1}\left[\binom{2n-k}k+\binom{2-k-1}{k-1}\right]C_{n-k}.\end{equation*} 
\end{lemma}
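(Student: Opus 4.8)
The plan is to prove the identity
\[C_n=\sum_{k=1}^n(-1)^{k+1}\left[\binom{2n-k}k+\binom{2n-k-1}{k-1}\right]C_{n-k}\]
(reading the evident typo $\binom{2-k-1}{k-1}$ as $\binom{2n-k-1}{k-1}$) by a generating-function argument. First I would recognize the coefficient $a_{k,n}=\binom{2n-k}k+\binom{2n-k-1}{k-1}$ as exactly the quantity appearing in \Cref{lem: N(npq) recurrence}, namely the number of length-$2n$ binary strings with no two cyclically adjacent $1$'s having weight $k$. So the identity says $C_n=\sum_{k\ge1}(-1)^{k+1}a_{k,n}C_{n-k}$, i.e., the Catalan numbers satisfy the \emph{same} inclusion–exclusion recurrence that $N(n,-p,q)$ does. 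The cleanest route is therefore to observe that this is the $p=q=1$ instance of \Cref{lem: N(npq) recurrence}: by \Cref{thm: n10 case}, $N(n,-1,1)=C_n$, and the boundary term $N(0,-1,1)=s=1$ agrees with $C_0=1$, so feeding $p=q=1$ into \Cref{eq: N(npq) recurrence} yields precisely the claimed Catalan recurrence. I would lead with this, since it makes the lemma essentially a corollary.

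For a self-contained combinatorial proof independent of the contact-geometry machinery, I would instead argue directly. Let $T_n$ denote the set of triangulations of a convex $(n+2)$-gon, so $|T_n|=C_n$, and let $D_n$ denote the collection of cyclic binary strings of length $2n$ with no two cyclically adjacent $1$'s; the generating polynomial $\sum_k a_{k,n}z^k$ is the cycle (Lucas-type) analogue whose signed evaluation at $z=1$ we must understand. The key step is to set up a sign-reversing involution on pairs $(\sigma,S)$, where $\sigma\in T_{n-k}$ is a Catalan object of reduced size and $S\subseteq\ZZ/2n\ZZ$ is a $k$-subset with no two cyclically adjacent elements, with the pair weighted by $(-1)^k$; the total signed count of such pairs is the right-hand side (up to the overall sign convention $(-1)^{k+1}$). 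The plan is to exhibit an involution that cancels all pairs with $k\ge1$ against a correction, leaving exactly the $C_n$ objects counted with positive sign.

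A more computational but very reliable alternative, which I would actually carry out, is to pass to generating functions. Writing $C(x)=\sum_{n\ge0}C_nx^n=\frac{1-\sqrt{1-4x}}{2x}$ and recalling Kaplansky's identity $a_{k,n}=\binom{2n-k}k\frac{n}{n-k}$, one recognizes $\sum_{k\ge0}(-1)^k a_{k,n}$ as a value tied to Chebyshev/Lucas polynomials evaluated on the Catalan generating function. Concretely, I would define $F(x)=\sum_{n\ge1}\left(\sum_{k=1}^n(-1)^{k+1}a_{k,n}C_{n-k}\right)x^n$, rewrite the inner sum as a Cauchy-type product of $C(x)$ with the signed Lucas generating series $L(x)=\sum_{n,k}(-1)^{k}a_{k,n}x^n$, and verify $F(x)=C(x)-C_0=C(x)-1$ by the functional equation $x\,C(x)^2=C(x)-1$. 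The main obstacle will be bookkeeping the boundary index $k=n$ term (where $C_{n-k}=C_0=1$ plays the role of the geometric boundary value $s$) and confirming that the cyclic-adjacency count $a_{k,n}$ meshes with the quadratic relation for $C(x)$; once the generating-function identity $x\,C(x)^2=C(x)-1$ is invoked, extracting the coefficient of $x^n$ on both sides gives the stated recurrence immediately.
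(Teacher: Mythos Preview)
Your primary approach---specializing \Cref{lem: N(npq) recurrence} to $p=q=1$ and invoking \Cref{thm: n10 case} so that $N(n,-1,1)=C_n$ and $N(0,-1,1)=s=1=C_0$---is exactly the paper's proof. The alternative generating-function and involution arguments you sketch are extra and not needed here.
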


\begin{proof}
We know by \Cref{thm: n10 case} that there are precisely $C_n$ tight contact structures with $\Gamma=(n,-1,1)$. \Cref{lem: N(npq) recurrence} implies the result. 
\end{proof}

\begin{lemma} \label{lem: nC_n recurrence}
For every positive integer $n$, we have \begin{equation}\label{eq: nC_n recurrence} C_n(n+1)=\sum_{k=1}^n(-1)^{k+1}\left[\binom{2n-k}k+\binom{2n-k-1}{k-1}\right]C_{n-k}(n-k+1).\end{equation} 
\end{lemma}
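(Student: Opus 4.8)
The goal is to prove the identity
\begin{equation*}
C_n(n+1)=\sum_{k=1}^n(-1)^{k+1}\left[\binom{2n-k}k+\binom{2n-k-1}{k-1}\right]C_{n-k}(n-k+1).
\end{equation*}
The plan is to reduce this to the already-established recurrence in \Cref{lem: C_n recurrence} by a purely algebraic manipulation, thereby isolating the genuinely new content into a simpler auxiliary identity. Writing $a_{k,n}=\binom{2n-k}k+\binom{2n-k-1}{k-1}$ for the Kaplansky coefficients, \Cref{lem: C_n recurrence} already tells us that $C_n=\sum_{k=1}^n(-1)^{k+1}a_{k,n}C_{n-k}$. Since the desired claim weights the summand $C_{n-k}$ by the extra factor $(n-k+1)$ (and the left side carries the factor $(n+1)$), I would first split $n-k+1=(n+1)-k$ on the right-hand side. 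This separates the target sum into $(n+1)\sum_{k=1}^n(-1)^{k+1}a_{k,n}C_{n-k}$ minus $\sum_{k=1}^n(-1)^{k+1}k\,a_{k,n}C_{n-k}$. The first piece is exactly $(n+1)C_n$ by \Cref{lem: C_n recurrence}, which is precisely the left-hand side of what we want.

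Therefore the whole statement collapses to proving the auxiliary vanishing identity
\begin{equation*}
\sum_{k=1}^n(-1)^{k+1}k\left[\binom{2n-k}k+\binom{2n-k-1}{k-1}\right]C_{n-k}=0.
\end{equation*}
This is the crux of the argument, and I expect it to be the main obstacle. My first approach would be generating functions: let $C(x)=\sum_{m\ge0}C_m x^m=\frac{1-\sqrt{1-4x}}{2x}$, and recall that the Kaplansky numbers $a_{k,n}$ count binary strings of length $2n$ on a cycle with no two adjacent $1$'s and exactly $k$ ones, so $\sum_k a_{k,n}z^k$ has a clean closed form related to Lucas/Chebyshev polynomials. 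The factor of $k$ can be produced by applying $z\frac{d}{dz}$ to that generating polynomial and specializing. The hope is that, after multiplying by the Catalan generating function and extracting the coefficient of $x^n$, the alternating sign $(-1)^{k+1}$ turns the relevant substitution into a point where the derivative-weighted polynomial vanishes identically.

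If the generating-function route proves unwieldy, the fallback is a direct combinatorial or inductive argument. One clean option is to note that $k\binom{2n-k}{k}$ and $k\binom{2n-k-1}{k-1}$ each simplify via absorption identities ($k\binom{2n-k}{k}=(2n-k)\binom{2n-k-1}{k-1}$, for instance), which may allow the two terms inside the bracket to be recombined so that the weighted sum telescopes against the unweighted recurrence of \Cref{lem: C_n recurrence} after a shift of the summation index $k\mapsto k-1$. Since \Cref{lem: C_n recurrence} holds for every $n$, I would compare the weighted sum for parameter $n$ against the unweighted sums for $n$ and $n-1$, using the Pascal-type relation $a_{k,n}=a_{k,n-1}+a_{k-1,n-1}$ satisfied by the Kaplansky coefficients, to show the weighted alternating sum is forced to be zero. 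Either way, once the auxiliary identity is in hand, the lemma follows immediately by the splitting described above, with no further estimation required.
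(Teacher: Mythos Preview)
Your reduction is sound: writing $n-k+1=(n+1)-k$ and invoking \Cref{lem: C_n recurrence} does collapse the statement to the vanishing identity
\[
\sum_{k=1}^n(-1)^{k+1}\,k\,a_{k,n}\,C_{n-k}=0\qquad(n\ge 2),
\]
and this identity is true. But the proposal never actually proves it; both paragraphs after the reduction are plans (``the hope is that\ldots'', ``I would compare\ldots'') rather than arguments, and the lemma stands or falls on this step.

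More seriously, your fallback route contains a false claim. The Pascal-type relation $a_{k,n}=a_{k,n-1}+a_{k-1,n-1}$ does \emph{not} hold for the cyclic Kaplansky numbers: already $a_{1,2}=4$ while $a_{1,1}+a_{0,1}=2+1=3$. The correct recursion for the cycle counts $L(m,k)=\frac{m}{m-k}\binom{m-k}{k}$ steps $m$ by one, i.e.\ $L(m,k)=L(m-1,k)+L(m-2,k-1)$; since $a_{k,n}=L(2n,k)$ this does not translate into a two-term relation in $n$. So the telescoping you sketch cannot work as stated. Your generating-function idea, by contrast, \emph{can} be made to work: using $k\,a_{k,n}=2n\binom{2n-k-1}{k-1}$, the auxiliary identity becomes $\sum_{j=0}^{m}(-1)^j\binom{2m-j}{j}C_{m-j}=0$ for $m\ge 1$, and the substitution $u=x/(1+x)^2$ in $C(u)$ dispatches it. But none of this is in the write-up.

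For comparison, the paper does not route through \Cref{lem: C_n recurrence} at all. It observes that $C_m(m+1)=\binom{2m}{m}$, so the claim is a pure central-binomial identity; after a short algebraic simplification it reduces to showing that the coefficient of $x^n$ in $A(x)B(x)$ vanishes, where $A(x)=(1-x)^n$ and $B(x)=\frac{1}{n(1-x)^n}$, which is immediate. Your decomposition trades one generating-function computation for another of comparable size, plus a dependence on the earlier lemma; it is a legitimate alternative, but as written it is a sketch with one incorrect branch rather than a proof.
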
 

\begin{proof}
Observe that $C_n(n+1)=\binom{2n}n$ and the term in the brackets is simply 1 when $k=0$. Thus it suffices, in fact, to simply prove that \[\sum_{k=0}^n(-1)^k\left[\binom{2n-k}k+\binom{2n-k-1}{k-1}\right]\binom{2n-2k}{n-k}=0.\] But we know that \begin{align*}\left[\binom{2n-k}k+\binom{2n-k-1}{k-1}\right]\binom{2n-2k}{n-k}&=\frac{2n(2n-k-1)!}{k!(n-k)!^2}\\&=2n\binom nk\binom{2n-k}n\cdot\frac1{2n-k}.\end{align*}

Thus to show \Cref{lem: nC_n recurrence}, it is sufficient to show that \[\sum_{k=0}^n(-1)^k\binom nk\binom{2n-k}n\cdot\frac1{2n-k}=0.\] But because $\binom nk=\binom n{n-k}$, by replacing $k$ with $n-k$, we find that it is sufficient to show that \begin{equation*}S_n\coloneqq\sum_{k=0}^n(-1)^{n-k}\binom n{n-k}\binom{n+k}n\cdot\frac1{n+k}=0.\end{equation*} 

To show this, define $a_k$ and $b_k$ as \[a_k=(-1)^k\binom nk,\quad b_k=\binom{n+k}n\cdot\frac1{n+k}.\] Then we have $S_n=\sum_{k=0}^na_{n-k}b_k$. It suffices to show that $S_n=0$ whenever $n\ge1$. To do this, we will use the generating functions of the sequences $\{a_k\}$ and $\{b_k\}$. In particular, begin by defining $A(x)$ and $B(x)$ as \begin{align*}A(x)&\coloneqq\sum_{k=0}^na_kx^k=\sum_{k=0}^n(-1)^k\binom nkx^k=(1-x)^n\\B(x)&\coloneqq\sum_{k=0}^\infty b_kx^k=\sum_{k=0}^\infty\binom{n+k}n\cdot\frac1{n+k}x^k.\end{align*} Then it is clear that \[\frac d{dx}\Big(x^nB(x)\Big)=x^{n-1}\sum_{k=0}^\infty\binom{n+k}nx^k=(1-x)^{-n-1}-1,\] where the second equality holds for all $|x|<1$. Taking the integral and simplifying, we find that \[x^nB(x)=\frac{x^n}{n(1-x)^n}.\] Note that there is no constant to worry about because at $x=0$, both sides are zero. 

In short, we have now that \begin{align*}A(x)&=\sum_{k=0}^na_kx^k=(1-x)^n\\B(x)&=\sum_{k=0}^\infty b_kx^k=\frac1{n(1-x)^n}.\end{align*} But we also know that the coefficient of $x^n$ in $A(x)B(x)=\frac1n$ is $\sum_{k=1}^na_{n-k}b_k=S_n$. From this we conclude that $S_n=0$, as desired. 
\end{proof} 

\begin{corollary} \label{cor: nC_n recurrence}
For every postive integer $n$, we have \begin{equation*}%\label{main-eqn2}
nC_n=\sum_{k=1}^n(-1)^{k+1}\left[\binom{2n-k}k+\binom{2n-k-1}{k-1}\right]C_{n-k}(n-k).\end{equation*}
\end{corollary}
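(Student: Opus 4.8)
The plan is to obtain the corollary directly by subtracting the two recurrences already established, rather than re-running any combinatorial or generating-function argument. Writing $a_{k,n}=\binom{2n-k}{k}+\binom{2n-k-1}{k-1}$ for brevity, \Cref{lem: C_n recurrence} gives $C_n=\sum_{k=1}^n(-1)^{k+1}a_{k,n}C_{n-k}$, while \Cref{lem: nC_n recurrence} gives $(n+1)C_n=\sum_{k=1}^n(-1)^{k+1}a_{k,n}(n-k+1)C_{n-k}$.

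First I would split the weight appearing in the recurrence of \Cref{lem: nC_n recurrence} using the trivial identity $n-k+1=(n-k)+1$, so that its right-hand side separates as
\[\sum_{k=1}^n(-1)^{k+1}a_{k,n}(n-k)C_{n-k}+\sum_{k=1}^n(-1)^{k+1}a_{k,n}C_{n-k}.\]
The key observation is then that the second of these two sums is \emph{exactly} the right-hand side of the recurrence in \Cref{lem: C_n recurrence}, and therefore equals $C_n$.

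Consequently $(n+1)C_n$ equals the desired sum $\sum_{k=1}^n(-1)^{k+1}a_{k,n}(n-k)C_{n-k}$ plus $C_n$; subtracting $C_n$ from both sides yields $nC_n=\sum_{k=1}^n(-1)^{k+1}a_{k,n}(n-k)C_{n-k}$, which is precisely the claimed identity. I expect no genuine obstacle here: all the analytic content resides in \Cref{lem: nC_n recurrence}, and the corollary is a one-line consequence obtained by peeling off the lower-order term $C_{n-k}$ (with coefficient $1$ rather than $n-k$) and recognizing it as the already-proven $C_n$ recurrence.
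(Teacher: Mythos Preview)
Your proposal is correct and is essentially identical to the paper's own proof: the paper also obtains the corollary by writing $nC_n = C_n(n+1) - C_n$ and subtracting the recurrence of \Cref{lem: C_n recurrence} from that of \Cref{lem: nC_n recurrence}, using the same shorthand $a_{k,n}$. Your version simply spells out the splitting $(n-k+1)=(n-k)+1$ a bit more explicitly, but the argument is the same.
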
 

\begin{proof}
This follows by combining \Cref{lem: C_n recurrence} and \Cref{lem: nC_n recurrence}. In particular, for every positive integer $n$, we know that \[nC_n=C_n(n+1)-C_n=\sum_{k=1}^n(-1)^{k+1}a_{k,n}C_{n-k}(n-k),\] where $a_{k,n}=\binom{2n-k}k+\binom{2n-k-1}{k-1}$. 
\end{proof}

Combining \Cref{lem: C_n recurrence,cor: nC_n recurrence}, along with previously known results for small cases detailed in \Cref{sec: known results}, we are able to show that $C_n$ satisfies the recurrence for $N(n,-p,q)$ in \Cref{lem: N(npq) recurrence}. This thus proves the main theorem. 

\begin{proof}[Proof of \Cref{thm: main theorem}]
We know by \Cref{lem: N(npq) recurrence} that \[N(n,-p,q)=\sum_{k=1}^n(-1)^{k+1}\left[\binom{2n-k}k+\binom{2n-k-1}{k-1}\right]N(n-k,p,q).\] However, we also know that for any $r$ and $s$, the numbers $C_n((r-s)n+s)$ also satisfy this recurrence. In particular, if we once again let $a_{k,n}=\binom{2n-k}k+\binom{2n-k-1}{k-1}$, then we know that \begin{align*}C_n((r-s)n+s)&=(r-s)nC_n+sC_n\\&=\sum_{k=1}^n(-1)^{k+1}a_{k,n}C_{n-k}((r-s)(n-k)+s).\end{align*}

It therefore suffices to prove that the two sequences $\{N(n,-p,q)\}$ and $\{C_n((r-s)n+s)\}$ agree on $n=0,1$, for specific $r$ and $s$ dependent only on $p$ and $q$. But, writing $-p/q=[r_0,r_1,\dots,r_k]$, we know that we have \[N(1,p,q)=|(r_0+1)\dots(r_{k-1}+1)r_k|=r=C_1((r-s)\cdot1+s).\] 

Moreover, we know that, with $p'$ and $q'$ defined by the attachment of a bypass to $(1,-p,q)$, we must have \[-\frac{p'}{q'}=[r_0,r_1,\dots,r_{k-1},r_k+1].\] Recall that we are using $N(0,p,q)$ as shorthand to mean $N(1,p',q')$. Thus we find that \[N(0,p,q)=N(1,p',q')=s=C_0((r-s)\cdot0+s).\]

Our main theorem then follows, giving us \[N(n,-p,q)=C_n((r-s)n+s),\] as desired. 
\end{proof}

\bibliography{Index}

\end{document}